\documentclass[12pt,dvipdfmx]{article}
\usepackage[top=30truemm,bottom=30truemm,left=25truemm,right=25truemm]{geometry}
\usepackage{amsmath,amssymb,amsthm}
\usepackage{ascmac}
\usepackage{color}
\usepackage{hyperref}
\usepackage[english]{babel} 
\usepackage{authblk} 
\usepackage{tikz}
\usepackage{enumitem}

\theoremstyle{definition}
\newtheorem{theorem}{Theorem}[section]
\newtheorem{remark}[theorem]{Remark}
\newtheorem{lemma}[theorem]{Lemma}
\newtheorem{example}[theorem]{Example}
\newtheorem{corollary}[theorem]{Corollary}
\newtheorem{definition}[theorem]{Definition}
\newtheorem{question}[theorem]{Question}

\newtheorem*{proof of claim}{Proof of Claim}

\newcounter{claimcounter}
\newtheorem*{claim}{Claim \thetheorem.\theclaimcounter}

\DeclareMathOperator{\ATR}{\mathsf{ATR}}
\newcommand{\WKLo}{\mathbf{WKL}_0}
\newcommand{\ATRo}{\mathbf{ATR}_0}
\newcommand{\ACAo}{\mathbf{ACA}_0}
\newcommand{\RCAo}{\mathbf{RCA}_0}

\DeclareMathOperator{\TI}{\mathbf{TI}}
\DeclareMathOperator{\FP}{\mathsf{FP}}
\DeclareMathOperator{\MFP}{\mathsf{MFP}}

\DeclareMathOperator{\CA}{\mathbf{CA}}

\DeclareMathOperator{\LPP}{\mathsf{LPP}}
\DeclareMathOperator{\LFP}{\mathsf{LFP}}
\DeclareMathOperator{\LCP}{\mathsf{LCP}}
\DeclareMathOperator{\CNN}{\mathsf{C}_{\omega^\omega}}
\DeclareMathOperator{\dom}{dom}

\DeclareMathOperator{\WO}{WO}
\DeclareMathOperator{\lh}{lh}

\DeclareMathOperator{\N}{\mathbb{N}}
\DeclareMathOperator{\Hier}{\mathsf{Hier}}
\renewcommand{\P}{\mathsf{P}}
\DeclareMathOperator{\Pb}{Pb}
\DeclareMathOperator{\Q}{\mathsf{Q}}

\renewcommand{\L}{\mathcal{L}}
\newcommand{\rfn}{\mathrm{rfn}}
\newcommand{\GN}[1]{\ulcorner #1 \urcorner}
\newcommand{\HYP}{\mathrm{HYP}}
\newcommand{\M}{\mathcal{M}}
\renewcommand{\phi}{\varphi}
\newcommand{\myhyphen}{\mathchar`-}
\newcommand{\KB}{\mathrm{KB}}
\newcommand{\even}{\mathrm{even}}
\newcommand{\odd}{\mathrm{odd}}

\newcommand{\qedclaim}{
  \renewcommand{\qedsymbol}{$\blacksquare$}
  \qed
  \renewcommand{\qedsymbol}{$\box$}
}

%

\author[1]{YUDAI SUZUKI}
\author[2]{KEITA YOKOYAMA}
\affil[1,2]{Mathematical Institute, Tohoku University, Sendai, Japan}
\affil[1]{yudai.suzuki.q1@dc.tohoku.ac.jp}
\affil[2]{keita.yokoyama.c2@tohoku.ac.jp}

\title{Searching problems above \\arithmetical transfinite recursion}
\date{\today}

\begin{document}

\maketitle

\begin{abstract}
We investigate some Weihrauch problems between $\mathsf{ATR}_2$ and $\mathsf{C}_{\omega^\omega}$.
We show that the fixed point theorem for monotone operators on the Cantor space (a weaker version of the Knaster-Tarski theorem) is not Weihrauch reducible to $\mathsf{ATR}_2$.
Furthermore, we introduce the $\omega$-model reflection $\mathsf{ATR}_2^{\mathrm{rfn}}$ of $\mathsf{ATR}_2$ and show that it is an upper bound for problems provable from the axiomatic system $\mathbf{ATR}_0$ which are of the form $\forall X(\theta(X) \to \exists Y \eta(X,Y))$ with arithmetical formulas $\theta,\eta$. We also show that Weihrauch degrees of relativized least fixed point theorems for monotone operators on the Cantor space form a linear hierarchy between $\mathsf{ATR}_2^\mathrm{rfn}$ and $\mathsf{C}_{\omega^\omega}$.
\end{abstract}

\section{Introduction}

In this paper, we investigate the structure of Weihrauch degrees above Arithmetical Transfinite Recursion
by comparison with reverse mathematics.

The axiomatic system of the Arithmetical Transfinite Recursion ($\ATRo$) is known as one of the \textit{big five} systems of second-order arithmetic for the program of reverse mathematics, whose aim is to classify mathematical theorems by means of axiomatic strength.
$\ATRo$ is known as the second strongest system among the big five systems.
It is known that there is a large gap between $\ATRo$ and the strongest system $\Pi^1_1\mathchar`-\CA_0$.
This difference may be known as the difference of \textit{predicative} and \textit{impredicative}.
In \cite{MR3145191}, Towsner investigated a hierarchy between $\ATRo$ and $\Pi^1_1\mathchar`-\CA_0$.

By interpreting results of  reverse mathematics into computability theory, we can give a more precise classification of problems. In this paper, we focus on the classification of searching problems (also called Weihrauch problems), that is,  partial functions from $\mathcal{P}(\omega)$ to $\mathcal{P}(\mathcal{P}(\omega))$.
More precisely, we identify a mathematical problem $\P$ of the form $\forall X(\theta(X) \to \exists Y  \eta(X,Y))$ as the partial function $F_{\P}$ defined by $X \mapsto \{Y:\eta(X,Y)\}$ whose domain is $\{X: \theta(X)\}$.
In this paper, we mainly consider problems given by arithmetical formulas $\theta,\eta$ in the above form.
We call them \textit{arithmetical problems}.

We will see the relationship between reverse mathematics and Weihrauch degrees. Weihrauch degrees are degrees of searching problems induced by  the Weihrauch reduction according to the uniform constructivity of solutions.
The relationship between Weihrauch degrees and problems provable from the first three systems of \textit{big five}, $\RCAo, \mathbf{WKL}_0,\ACAo$ is widely studied, e.g. in \cite{Brattka_Gherardi, Gherardi_and_Marcone}.
For general settings, see also \cite{Brattka_Gherardi_Pauly, Dzhafarov_and_Mummert}.
Furthermore, some recent studies of this area focus on problems stronger or equal to $\ATRo$ \cite{MR4231614,arXiv1905.06868}.
We will introduce some new examples of such problems and investigate their Weihrauch degrees.

Remember that Arithmetical Transfinite Recursion states that \textit{for any set $X$ and well-ordering $W$, there exists a jump-hierarchy along $W$ starting with $X$}.
From the perspective of Weihrauch degrees, there are two searching problems which are analogues of $\ATRo$.
The weaker one is  $\ATR$, introduced in \cite{MR4231614}, and the stronger one is $\ATR_2$, introduced in \cite{arXiv1905.06868}.
The inputs of $\ATR$ are well-orderings, and it outputs the jump-hierarchy for a given input.
The inputs of $\ATR_2$ are any linear-orderings, and it outputs a descending sequence of a given ordering or a jump-hierarchy along it.
That is, $\ATR_2$ reflects the fact that any linear-ordering is either well-founded or ill-founded.
This use of the law of excluded middle is essential for showing that $\ATR_2$ is stronger than $\ATR$.
Indeed, there is a stronger form of the law of excluded middle used in the study of reverse mathematics within $\ATRo$
called the pseudo-hierarchy method.
More precisely, the pseudo-hierarchy method states that
\textit{for any $\Sigma^1_1$ formula $\psi(\preceq,X)$ and a set $A$ such that any well-ordering $W$ satisfies $\psi(W,A)$, there is an ill-founded linear order $L$ which satisfies $\psi(L,A)$ and has a jump hierarchy along $L$ starting with $A$}.
In d'Auriac \cite{PaulElliot}, it is shown that an analogue of the pseudo-hierarchy method is stronger than any arithmetical problem in the sense that choice on Baire space ($\CNN$) is Weihrauch reducible to it.
However, since $\ATRo$ is axiomatizable of the form $\forall X(\theta(X) \to \exists Y \eta(X,Y))$ with arithmetical formulas $\theta,\eta$ and $\CNN$ is stronger than any problem of this form, this representation of the pseudo-hierarchy method is too strong.
In this paper, we introduce an arithmetical problem which is
stronger than any arithmetical problem provable from $\ATRo$.
We also show that a version of the Knaster-Tarski theorem is not reducible to $\ATR_2$, although it is provable from $\ATRo$ by the pseudo-hierarchy method.

\subsection*{Structure of this paper}

Section 2 is for preliminaries. We introduce some basic notions of reverse mathematics and the Weihrauch hierarchy.

In Section 3, we introduce a Weihrauch problem called $\FP$ and study its Weihrauch degree.
It is shown that $\FP$ is provable in $\ATRo$ by the pseudo-hierarchy method \cite{MR1412508,bartschi2021atro}.
We prove that $\FP$ is not reducible to $\ATR_2$.
This implies that a strong Weihrauch problem is needed to capture provability from $\ATRo$.

In Section 4, we introduce the $\omega$-model reflection of Weihrauch problems and study its properties.
We show that the $\omega$-model reflection of an arithmetically representable problem is a bit stronger than the original one, and
the $\omega$-model reflection $\ATR_2^{\rfn}$ of $\ATR_2$ is sufficiently strong to reduce any arithmetical problem provable from
a theory $T \subseteq \{\sigma : \text{any $\omega$-model of $\ATRo$ satisfies $\sigma$} \}$.
In Section 5 and 6, we study the Weihrauch hierarchy by relativized leftmost path principles $(\LPP)$ and show that each $\LPP$ is a generalization of $\FP$.
In \cite{MR3145191}, it is shown that $\LPP$s make a hierarchy between $\ATRo$ and $\Pi^1_1\mathchar`-\CA_0$ in the context of reverse mathematics.
In this paper, we prove that $\LPP$s make a hierarchy between $\ATR_2^{\rfn}$ and $\CNN$. Moreover, we show that
there are large differences between each $\LPP$.

\section{Preliminaries}
\subsection{Trees}
In this subsection, we introduce the notion of trees and some related things.

\begin{definition}
  Let $X$ be a subset of $\omega$.
  We write $X^{<\omega}$ for the set of finite sequences of elements of $X$.

  Let $T$ be a subset of $X^{<\omega}$. We say $T$ is a tree on $X$ if
  $T$ is closed under taking initial segments.

  Let $T \subseteq X^{<\omega}$ be a tree.
  A function $f \in X^{\omega}$ is called a path of $T$ if $(\forall n)(f[n] \in T)$. Here, $f[n]$ denotes the initial segment of $f$ with length = $n$.
  The set of all paths of $T$ is denoted by $[T]$. We say $T$ is ill-founded if $[T] \neq \varnothing$.
\end{definition}

\begin{definition}
  Let $T$ be a tree.
  Define the Kleene-Brouwer ordering $\KB(T)$ on $T$ by
  \begin{align*}
    \sigma \leq_{\KB(T)} \tau \Leftrightarrow \tau \preceq \sigma \lor \exists j < \min\{|\sigma|,|\tau|\} (\sigma[j] = \tau[j] \land \sigma(j) < \tau(j)).
  \end{align*}
\end{definition}

For any tree $T$, $\KB(T)$ is a linear order. In addition, the following holds.
\begin{lemma} [{\cite[V.1.3]{MR2517689}}]
  For any tree $T$, $\KB(T)$ is ill-founded if and only if $T$ is ill-founded.
\end{lemma}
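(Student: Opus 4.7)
The plan is to handle the two directions of the biconditional separately, with the backward direction being the substantive one.

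For the forward direction, assume $T$ is ill-founded and pick a path $f \in [T]$. Consider the sequence $\tau_n := f[n]$ of initial segments. Each $\tau_n$ lies in $T$, and since $\tau_n \preceq \tau_{n+1}$ with $\tau_n \neq \tau_{n+1}$, the first clause in the definition of $\KB(T)$ immediately gives $\tau_{n+1} <_{\KB(T)} \tau_n$. So $(\tau_n)_{n}$ witnesses that $\KB(T)$ is ill-founded.

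For the backward direction, let $(\sigma_n)_{n \in \omega}$ be an infinite strictly descending sequence in $\KB(T)$. The plan is to build a path $f \in [T]$ one coordinate at a time. The key observation is that for any $i < j$, since $\sigma_j <_{\KB(T)} \sigma_i$, either $\sigma_j$ properly extends $\sigma_i$, or the least position $k$ at which they differ satisfies $\sigma_j(k) < \sigma_i(k)$. Consequently, whenever all $\sigma_n$ beyond some index share a common prefix of length $k$, the values $\sigma_n(k)$ (for those $\sigma_n$ long enough) form a weakly decreasing sequence of natural numbers.

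Using this, I would construct $f$ by induction on $k$. Suppose $a_0,\ldots,a_{k-1}$ and a threshold $N_k$ have been chosen so that every $\sigma_n$ with $n \geq N_k$ extends $\langle a_0,\ldots,a_{k-1}\rangle$. By strict descent, at most one $n \geq N_k$ satisfies $\sigma_n = \langle a_0,\ldots,a_{k-1}\rangle$ exactly, so past a slightly larger index every $\sigma_n$ has length greater than $k$. On this tail, $\sigma_n(k)$ is weakly decreasing and therefore eventually constant at some value $a_k$; I set $f(k) := a_k$ and take $N_{k+1}$ to be the first index from which $\sigma_n$ stably extends $\langle a_0,\ldots,a_k\rangle$. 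Since each $\langle a_0,\ldots,a_k\rangle$ is an initial segment of some $\sigma_n \in T$, closure of $T$ under initial segments gives $f \in [T]$.

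The main technical point to watch is the logical complexity of the construction: the characterization of each $a_k$ and $N_k$ is prima facie $\Sigma^0_2$, but $f$ can in fact be produced by a direct recursive search from $T$ together with the given descending sequence (at stage $k$, one searches for the next stabilization point of $\sigma_n(k)$ on the appropriate tail). This is what keeps the lemma available in $\RCAo$, matching the use of \cite[V.1.3]{MR2517689} throughout the paper.
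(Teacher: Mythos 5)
The paper itself gives no proof of this lemma; it is imported from Simpson's book. Your two directions and the pointwise-limit construction in the backward direction are the standard argument, and that part is correct: the dichotomy for $\sigma_j <_{\KB(T)} \sigma_i$, the weak decrease of $\sigma_n(k)$ along a tail on which the length-$k$ prefix has stabilized, the observation that at most one term can equal the stabilized prefix exactly, and the conclusion $f \in [T]$ by closure under initial segments are all fine.

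The error is in your final paragraph. The path $f$ cannot in general be produced by a recursive search from $T$ together with the descending sequence, and the biconditional is in fact not provable in $\RCAo$. Detecting the stabilization point of a weakly decreasing sequence of natural numbers is a genuinely $\Pi^0_1$ event; your $f$ is $\Delta^0_2$ in the data and no better. Concretely, let $T_0 \subseteq 2^{<\omega}$ be a computable infinite binary tree with no computable path, and let $\sigma_k$ be the lexicographically largest node of $T_0$ of length $k$. The sequence $(\sigma_k)_k$ is computable, and it is $\KB(T_0)$-descending: the length-$k$ prefix of $\sigma_{k+1}$ is a level-$k$ node, hence either equals $\sigma_k$ (so $\sigma_{k+1}$ properly extends $\sigma_k$ and the first clause of the definition applies) or lies lexicographically to its left (so the second clause applies). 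Thus $(T_0,(\sigma_k)_k)$ is a computable instance admitting no computable solution, and the $\omega$-model of all computable sets satisfies $\RCAo$ together with ``$\KB(T_0)$ is ill-founded and $T_0$ is well-founded.'' So the backward direction genuinely requires a Turing jump, i.e.\ essentially arithmetical comprehension. This does not undermine the paper's use of the lemma, which invokes it only in the intended model and inside $\omega$-models of $\ACAo$ (as in the characterization of $\Delta^0_n\beta$-models), where the $\Delta^0_2$ limit path is available; but the claim that a direct recursive search suffices, and that this keeps the lemma available in $\RCAo$, is false.
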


\subsection{Weihrauch degrees}
In this subsection, we define Weihrauch problems and Weihrauch reductions.
Then, we introduce some specific Weihrauch problems.

We often regard Weihrauch problems as representations of mathematical theorems.
From this viewpoint, Weihrauch reductions are used to compare the complexity of mathematical theorems according to computability.

\begin{definition}[Weihrauch problems]
  A partial function $\P : \subseteq \mathcal{P}(\omega) \to \mathcal{P}(\mathcal{P}(\omega))$ is called a Weihrauch problem, or a problem simply.

  Let $\P$ be a problem. We say a set $X \in \mathcal{P}(\omega)$ is an input for $\P$ if $X$ is in the domain of $\P$.
  For an input $X$, we say $Y$ is an output of $\P(X)$ if $Y \in \P(X)$.
\end{definition}

\begin{remark} \label{dom is nonempty}
  In this paper, for simplicity, we assume that any problem $\P$ has nonempty domain.
\end{remark}

\begin{remark}\label{Rmk code of seq}
  We give two remarks for our definition of Weihrauch problems.
  \begin{enumerate}
    \item In the usual definition, Weihrauch problems are defined as partial functions from $\omega^\omega$ to $\mathcal{P}(\omega^\omega)$ but this difference is not essential.
     More precisely, for any problem $\P:\subseteq \omega^\omega \to \mathcal{P}(\omega^\omega)$, there exists a problem
    $\Q:\subseteq \mathcal{P}(\omega) \to \mathcal{P}(\mathcal{P}(\omega))$ such that $\P$ is Weihrauch equivalent to $\Q$.
    \item We note that there are canonical computable bijections between $\mathcal{P}(\omega)$ and $\mathcal{P}(\omega^n)$. Similarly, there are canonical bijections between $\mathcal{P}(\omega)$ and $(\mathcal{P}(\omega))^n$, or $\mathcal{P}(\omega)$ and  $(\mathcal{P}(\omega))^{\omega}$.
  So we can identify a set $A$ as a subset of $\omega^n$ or a finite sequence of sets $\langle A_i \rangle_{i < n}$ or an infinite sequence $\langle A_i \rangle_{i \in \omega}$ and vice versa. Therefore, in this paper, we do not distinguish them. For example, if $A$ is a set, then $A_i$ denotes the $i$-th element of the sequence corresponding to $A$.
  In this manner, we can consider problems whose inputs or outputs are not a single set.
    \end{enumerate}
\end{remark}

\begin{definition}[Weihrauch reductions]
  Let $\P,\Q$ be problems. We say $\P$ is Weihrauch reducible to $\Q$ (we write $\P \leq_W \Q$) if there are computable functionals $\Phi,\Psi$ such that
  \begin{align*}
    (\forall X \in \dom(\P)) (\Phi(X)\mathord{\downarrow} \in \dom(\Q) \land \forall Y \in \Q(\Phi(X)) \Psi(X, Y)\mathord{\downarrow} \in \P(X)).
  \end{align*}
  Since $\leq_W$ is a pre-order, it induces a degree structure for problems, which we call Weihrauch degrees. If $\P,\Q$ have the same degree, then we write $\P \equiv_W \Q$ and say $\P$ is Weihrauch equivalent to $\Q$.

  We also define arithmetical Weihrauch reduction $\leq_W^a$ by replacing computable functionals $\Phi,\Psi$ with arithmetically definable functionals in the definition of Weihrauch reductions.
\end{definition}

Next, we see the notion of a (pseudo) jump hierarchy.

\begin{definition}\label{Definitions of ATR etc}
  Let $L \subseteq \omega^2$ be a linear order.
  We write $|L|$ for the field $\{i \in \omega : (i,i) \in L\}$ of $L$, $\min_L$ for the minimum element of $L$, $i \leq_L j, i <_L j$ for $(i,j) \in L$ and $(i,j) \in L \land i \neq j$ respectively.

  Let $A$ be a set. We say a set $X = \langle X_i \rangle_{i \in |L|}$ is a jump hierarchy along $L$ starting with $A$ (or jump hierarchy of $(L,A)$ for short) if for any $i \in |L|$,
  \begin{align*}
    X_i = \begin{cases}
    A \text{ if } i = \min_L, \\
    (X_{<_L i})' \text{ otherwise}.
  \end{cases}
  \end{align*}
  Here, $B'$ denotes the Turing jump of $B$, and $X_{<_L i}$ denotes the sequence $\langle X_j \rangle_{j <_L i}$.
  We remark that there exists an arithmetical formula (in the sense of second-order arithmetic) $\Hier(L,A,X)$ stating that $X$ is a jump hierarchy along $L$ starting with $A$. We also note that if $L$ is a well-order, then there exists exactly one $X$ satisfying $\Hier(L,A,X)$.
\end{definition}

\begin{definition}
  We define problems $\ATR,\ATR_2$ and $\CNN$ as follows.
  \begin{itembox}[l]{$\ATR$}
  \begin{description}
    \item[ \sf Input] A well-order $W$ and a set $A$.
    \item[ \sf Output] The jump hierarchy of $(W,A)$.
  \end{description}
  \end{itembox}

  \begin{itembox}[l]{$\ATR_2$}
  \begin{description}
    \item[ \sf Input] A linear order $L$ and a set $A$.
    \item[ \sf Output] A jump hierarchy of $(L,A)$ or a descending sequence of $L$.
  \end{description}
  \end{itembox}

  \begin{itembox}[l]{$\CNN$}
  \begin{description}
    \item[ \sf Input] An ill-founded tree $T \subseteq \omega^{<\omega}$.
    \item[ \sf Output] A path of $T$.
  \end{description}
\end{itembox}
\end{definition}

\begin{definition}[Products]
  Let $\P,\Q$ be problems.
  We define the parallel product $\P \times \Q$ of $\P$ and $\Q$ as follows.
  \begin{align*}
    &\dom (\P \times \Q) = \dom \P \times \dom \Q, \\
    &(\P \times \Q )(X_{0},X_{1}) = \P(X_0) \times \Q(X_1).
  \end{align*}
  We also define the infinite parallel product $\widehat{\P}$ of $\P$ as follows.
  \begin{align*}
    &\dom (\widehat{\P}) = \prod_{i \in \omega} \dom \P, \\
    &\widehat{\P}(\langle X_i \rangle_i) = \prod_{i} \P(X_i).
  \end{align*}
\end{definition}
It is immediate that $\P \leq_W \P \times \P \leq_W \widehat{\P}$.
On the other hand, the converses in these relations do not hold in general. We say a problem $\P$ is parallelizable if
$\P \equiv_W \widehat{\P}$.
It is well-known that $\ATR$ and $\CNN$ are parallelizable.
We will see that $\ATR_2$ is not parallelizable (Theorem \ref{ATR_2 is not parallelizable}).

\subsection{Second-order arithmetic and reverse mathematics}
In this subsection, we introduce some specific axiom systems of second-order arithmetic.
After that, we give some existing results in reverse mathematics on some variants of the Knaster-Tarski fixed point theorem.

\subsubsection{Second-order arithmetic and $\omega$-models}
  We write $\L_2$ for the language of second-order arithmetic.
  Thus $\L_2 = \{0,1,+,\cdot,<,\in\}$.

We adopt the Henkin semantics for second-order arithmetic, that is,
for any $\L_2$ structure $M = (\N^M,S^M) = (\N^M,S^M,0^M,1^M,+^M,\cdot^<,<^M)$, $S^M$ is a subset of $\mathcal{P}(\N^M)$, and the symbol $\in$ is interpreted as the standard membership relation.
We call $\N^M$ the \textit{first-order part} of $M$ and $S^M$ the \textit{second-order part} of $M$.

\begin{remark}
  We use the character $\omega$ for the set of standard natural numbers, and $\mathbb{N}$ for the first-order part of an $\L_2$ structure.
\end{remark}

\begin{definition}
  We give some specific axiom systems of second-order arithmetic.
  For the details, see \cite{MR2517689}.
  \begin{itemize}
    \item $\RCAo$ consists of the axioms for discrete ordered semi-ring, $\Sigma^0_1\mathchar`-$induction and $\Delta^0_1$-comprehension.
    \item $\ACAo$ consists of $\RCAo$ and $\forall X \exists Y (Y = X')$.
    \item $\ATRo$ consists of $\RCAo$ and $\forall W,A(\WO(W) \to \exists X \Hier(W,A,X))$.
  \end{itemize}
  Here, $\WO(W)$ is the formula stating that `$W$ is a well-order'.
\end{definition}

In connection to Weihrauch degrees and reverse mathematics, we mostly focus on models of second-order arithmetic whose first-order part is $\omega$.

\begin{definition}[$\omega$-models]
  An $\L_2$ structure $M = (\N^M,S^M)$ is called an $\omega$-model if the first-order part $\N^M$ is equal to $\omega$.
  Then, any class $S \subseteq \mathcal{P}(\omega)$ is identified as an $\omega$-model $(\omega,S)$.
  We call the $\omega$-model $(\omega,\mathcal{P}(\omega))$ \textit{the intended model}.

  For an $\L_2$ theory $\mathbf{T}$, we say $S \subseteq \mathcal{P}(\omega)$ is an $\omega$-model of $\mathbf{T}$ if $(\omega,S) \models \mathbf{T}$. If it is clear from the context, we write $S \models \mathbf{T}$ instead of $(\omega,S) \models \mathbf{T}$.
\end{definition}

\begin{lemma}[Absoluteness for arithmetical sentences]\label{absoluteness for arithmetical sentences}
  Let $S$ be an $\omega$-model. Then for any arithmetical sentence $\theta$ with parameters from $S$,
  $S \models \theta$ is equivalent to $\mathcal{P}(\omega) \models \theta$, that is, $\theta$ is true in the usual mathematics.

  Moreover, any $\Pi^1_1$ sentence true in the intended model is also true in $S$, and any $\Sigma^1_1$ sentence true in $S$ is also true in the intended model.
\end{lemma}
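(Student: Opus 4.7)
The plan is to proceed by a straightforward structural induction on the formula $\theta$ for the first claim, and then derive the $\Pi^1_1$ and $\Sigma^1_1$ absoluteness statements as immediate corollaries.

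For the first claim, I would fix an $\omega$-model $S = (\omega, S^M)$ and prove by induction on the construction of $\theta$ that for every arithmetical formula $\theta(x_1,\dots,x_n,X_1,\dots,X_m)$, every $a_1,\dots,a_n \in \omega$, and every $A_1,\dots,A_m \in S^M$, we have $S \models \theta(\vec a, \vec A)$ if and only if $\mathcal{P}(\omega) \models \theta(\vec a, \vec A)$. The base case covers atomic formulas built from $0,1,+,\cdot,<$ and the membership relation $n \in X_i$: since the first-order part of $S$ is literally $\omega$ with its standard arithmetical operations, and since $\in$ is interpreted in both models as actual set-theoretic membership, both interpretations of each atomic formula with parameters $\vec a, \vec A$ agree. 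Boolean connectives are immediate from the inductive hypothesis. For the number quantifier case $\exists n\, \psi(n,\vec a, \vec A)$, the quantifier in $S$ ranges over $\mathbb{N}^S = \omega$, which is the same range as in the intended model, so the equivalence follows directly from the inductive hypothesis applied to $\psi$ (and analogously for $\forall n$). Crucially, no set quantifiers appear in an arithmetical formula, which is why this induction goes through without needing $S^M = \mathcal{P}(\omega)$.

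The second claim (downward $\Pi^1_1$ absoluteness) follows by writing the sentence as $\forall X\, \theta(X)$ with $\theta$ arithmetical. If this holds in the intended model, then in particular $\mathcal{P}(\omega) \models \theta(A)$ for every $A \in S^M \subseteq \mathcal{P}(\omega)$, so by the first claim $S \models \theta(A)$ for every $A \in S^M$, which is exactly $S \models \forall X\, \theta(X)$. The third claim (upward $\Sigma^1_1$ absoluteness) is the dual: if $S \models \exists X\, \theta(X)$ with $\theta$ arithmetical, pick a witness $A \in S^M$, apply the first claim to conclude $\mathcal{P}(\omega) \models \theta(A)$, and note that $A \in \mathcal{P}(\omega)$ gives $\mathcal{P}(\omega) \models \exists X\, \theta(X)$.

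There is no real obstacle in this proof; the only subtle point is ensuring that the statement of the induction correctly allows set parameters drawn from $S^M$, so that the atomic membership assertions $n \in X_i$ have a meaning common to both structures. Once this is set up, the three claims fall out in a few lines each, and no special properties of $S^M$ beyond being a subclass of $\mathcal{P}(\omega)$ are used.
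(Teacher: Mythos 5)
Your proof is correct and is exactly the argument the paper has in mind: the paper's proof is simply ``Immediate from induction on the construction of $\theta$,'' and your write-up spells out that induction (atomic formulas agree because the first-order part is $\omega$ and $\in$ is standard membership, number quantifiers range over the same set, and the $\Pi^1_1$/$\Sigma^1_1$ claims follow by restricting or lifting the outer set quantifier). No differences to report.
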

\begin{proof}
  Immediate from induction on the construction of $\theta$.
\end{proof}

\begin{remark}\label{ACA jump ideal}
We note that $S \subseteq \mathcal{P}(\omega)$ is an $\omega$-model of $\ACAo$ if and only if it is a Turing ideal closed under Turing jumps. This is immediate from the previous lemma (see also \cite{MR2517689}).
\end{remark}

\subsubsection{Reverse mathematics of variants of the Knaster-Tarski theorem}
In this subsection, we introduce some problems whose Weihrauch degrees will be studied in this paper.
The following theorem is known as the Knaster-Tarski theorem.

\begin{definition}
  An operator $\Gamma : \mathcal{P}(\omega) \to \mathcal{P}(\omega)$ is monotone if it is monotone increasing with respect to the inclusion order.
\end{definition}
\begin{theorem}[Knaster-Tarski]
  Any monotone operator on $\mathcal{P}(\omega)$ has a least fixed point.
\end{theorem}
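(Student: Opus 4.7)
The plan is to exhibit the least fixed point as the intersection of all pre-fixed points of $\Gamma$, following Tarski's classical argument. First I would set
\[
F = \bigcap \{ X \subseteq \omega : \Gamma(X) \subseteq X \},
\]
which is well-defined because the family on the right is nonempty: it contains $\omega$ itself, since $\Gamma(\omega) \subseteq \omega$ trivially.

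Next I would verify that $F$ is a fixed point in two steps. For the inclusion $\Gamma(F) \subseteq F$: given any $X$ with $\Gamma(X) \subseteq X$, the definition of $F$ gives $F \subseteq X$, so monotonicity yields $\Gamma(F) \subseteq \Gamma(X) \subseteq X$; intersecting over all such $X$ produces $\Gamma(F) \subseteq F$. For the reverse inclusion $F \subseteq \Gamma(F)$: applying $\Gamma$ to the inclusion just obtained and invoking monotonicity once more, $\Gamma(\Gamma(F)) \subseteq \Gamma(F)$, which shows that $\Gamma(F)$ is itself a member of the family defining $F$; hence $F \subseteq \Gamma(F)$. Combining the two inclusions gives $F = \Gamma(F)$.

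Finally, leastness is immediate: any fixed point $Y$ of $\Gamma$ satisfies $\Gamma(Y) = Y$, so in particular $\Gamma(Y) \subseteq Y$, and therefore $F \subseteq Y$ by the very definition of $F$.

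No step of this argument poses a genuine obstacle in the intended model $(\omega, \mathcal{P}(\omega))$; the proof is purely set-theoretic and two applications of monotonicity suffice. The only point worth flagging, in view of the paper's reverse-mathematical theme, is that forming the intersection $F$ requires quantification over all subsets of $\omega$ satisfying a $\Pi^1_1$-style condition, which is precisely why the formal counterpart of this theorem is impredicative and why its Weihrauch analogue sits above $\ATRo$ in strength.
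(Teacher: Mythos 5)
Your proof is correct: it is the standard Tarski argument, taking the intersection of all closed (pre-fixed) points and using monotonicity twice to show it is a fixed point, with leastness following because every fixed point is a closed point. The paper states this theorem without proof as classical background, so there is nothing to compare against; your argument is the canonical one, and your closing remark about the impredicativity of forming $F$ correctly anticipates why the paper pairs this statement with its equivalence to $\Pi^1_1\text{-}\mathbf{CA}_0$ for arithmetically definable operators.
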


\begin{theorem}[Folklore]
  Over $\RCAo$, the Knaster-Tarski theorem for arithmetically definable operators is equivalent to $\Pi^1_1\mathchar`-\CA_0$.
\end{theorem}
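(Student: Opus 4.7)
The plan is to prove each direction by a standard argument. For the forward direction I would realize the least fixed point as an intersection of pre-fixed points, whose defining formula is $\Pi^1_1$. For the backward direction I would code well-foundedness of trees as the least fixed point of a monotone arithmetical operator and invoke the hypothesis.

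For the forward direction, assume $\Pi^1_1\mathchar`-\CA_0$ and let $\Gamma$ be an arithmetical monotone operator. Set
\begin{align*}
F = \{n : \forall X\, (\Gamma(X) \subseteq X \to n \in X)\},
\end{align*}
which exists by $\Pi^1_1$-comprehension. Standard manipulations inside $\ACAo$ show $F \subseteq X$ for every pre-fixed $X$ by definition, whence $\Gamma(F) \subseteq \Gamma(X) \subseteq X$, so $\Gamma(F) \subseteq F$; then $\Gamma(F)$ is itself pre-fixed by monotonicity, so $F \subseteq \Gamma(F)$ again by definition of $F$, and thus $\Gamma(F) = F$. Minimality is immediate from the definition.

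For the backward direction, assume the Knaster--Tarski theorem for arithmetical monotone operators. Given a $\Pi^1_1$ formula $\phi(n)$, in $\RCAo$ find a uniformly computable sequence of trees $(T_n)_{n \in \omega}$ with $\phi(n) \iff T_n$ is well-founded. Define the arithmetical monotone operator
\begin{align*}
\Gamma(X) = \{(n,\sigma) \in \omega \times \omega^{<\omega} : \forall k\, (\sigma * \langle k \rangle \notin T_n \lor (n, \sigma * \langle k \rangle) \in X)\},
\end{align*}
and let $F$ be its least fixed point. I claim $(n, \sigma) \in F$ iff the subtree $T_n^\sigma := \{\tau : \sigma * \tau \in T_n\}$ is well-founded; then $\{n : \phi(n)\} = \{n : (n, \langle\rangle) \in F\}$ exists as a $\Delta^0_1(F)$ set.

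The main obstacle is verifying the claim. If $T_n^\sigma$ is well-founded but $(n, \sigma) \notin F$, then unfolding $F = \Gamma(F)$ lets me recursively pick, at each stage, an immediate extension of the current node lying in $T_n$ whose pair is still outside $F$; this produces a path of $T_n^\sigma$ in $\RCAo$, contradicting well-foundedness. The converse is more delicate. Given a path $g$ of $T_n^\sigma$, set $C = (\omega \times \omega^{<\omega}) \setminus \{(n, \sigma * g[k]) : k \in \omega\}$; a direct check yields $\Gamma(C) \subseteq C$. To conclude $F \subseteq C$ without relying on an ``intersection of pre-fixed points'' form of Knaster--Tarski that is not part of our hypothesis, I apply the hypothesis a second time to the arithmetical monotone operator $\Gamma'(X) := \Gamma(X \cap C)$, whose image is contained in $C$. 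Its least fixed point $F'$ then satisfies $F' \subseteq C$, and since $F' \subseteq C$ gives $\Gamma'(F') = \Gamma(F')$, the set $F'$ is also a fixed point of $\Gamma$. Minimality of $F$ forces $F \subseteq F' \subseteq C$, giving $(n, \sigma) \notin F$ and completing the verification.
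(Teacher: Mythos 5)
The paper states this result as folklore and gives no proof, so there is nothing to compare against; judged on its own, your argument is correct and is the standard one. The forward direction (realizing the least fixed point as the intersection $\{n : \forall X\,(\Gamma(X)\subseteq X \to n\in X)\}$ of pre-fixed points, which is $\Pi^1_1$ since $\Gamma$ is arithmetical) and the reversal (coding well-foundedness of the normal-form trees of a $\Pi^1_1$ formula as membership in the least fixed point of a positive arithmetical operator) are both sound, and the recursion producing a path from $(n,\sigma)\notin F=\Gamma(F)$ is available in $\RCAo$ since the relevant conditions are $\Delta^0_1$ in $F\oplus\langle T_n\rangle_n$. The one genuinely delicate point is the one you isolated: the hypothesis only asserts leastness among \emph{fixed} points, not containment in every closed point $C$, and your workaround --- applying the hypothesis to $\Gamma'(X)=\Gamma(X\cap C)$, whose fixed points lie in $C$ and are automatically fixed points of $\Gamma$ --- correctly closes that gap. (The needed parameter $C$ is $\Delta^0_1$ in the given path $g$, so $\Gamma'$ is still an arithmetically definable monotone operator with set parameters, which is what the hypothesis covers.) This is a complete and correct proof of the stated equivalence.
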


In Avigad \cite{MR1412508}, a weaker version of the previous theorem is studied. Furthermore, that result is extended by B\"{a}rtschi \cite{bartschi2021atro}.

\begin{definition}
  Let $\varphi(n,X)$ be a formula in the negation normal form.
  We say $\varphi$ is $X$-positive if there is no subformula of the form $t \not \in X$.
\end{definition}
We can easily prove the following lemma by induction on the construction of formulas.
\begin{lemma}
  Let $\varphi(n,X)$ be an $X$-positive formula.
  Then $\RCAo$ proves
  \begin{align*}
    (\forall X,Y)[X \subseteq Y \to (\forall n) (\varphi(n,X) \to \varphi(n,Y))].
  \end{align*}
\end{lemma}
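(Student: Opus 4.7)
The plan is to prove the statement by meta-induction on the construction of the $X$-positive formula $\varphi(n,X)$ in negation normal form. For each specific $\varphi$, I will verify that $\RCAo$ proves the corresponding monotonicity instance; since the induction is external, only finitely many easy deductions inside $\RCAo$ are needed for any given $\varphi$.

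First I would handle the base cases. A literal in negation normal form is either an atomic formula or the negation of one. Atomic formulas of arithmetic (equalities and inequalities between terms) and the literals $t \in Z$, $t \not\in Z$ with $Z$ not the distinguished variable $X$ do not mention $X$, so preservation is trivial. The literal $t \in X$ gives the only nontrivial case: from $X \subseteq Y$ and $t \in X$ one immediately concludes $t \in Y$, which $\RCAo$ proves. The literal $t \not\in X$ does not occur by the $X$-positivity hypothesis, which is precisely why the base step goes through.

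For the inductive step I would treat the four clauses $\land$, $\lor$, $\forall$, $\exists$ separately. In each case, if $\varphi$ is built from subformulas $\varphi_0$, $\varphi_1$ (or $\varphi_0(n,m,X)$ under a quantifier) which are themselves $X$-positive and satisfy the induction hypothesis, then $\RCAo$ proves $X \subseteq Y \to (\varphi_i(n,X) \to \varphi_i(n,Y))$ for each $i$, and propositional logic plus the elementary rules for quantifiers immediately yield the corresponding implication for $\varphi$. No use of comprehension or induction inside $\RCAo$ is required beyond pure first-order reasoning on the set parameter $X,Y$.

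The only point that deserves attention is that the external induction produces a separate proof for each $\varphi$ rather than a uniform proof of $\forall \varphi$ inside $\RCAo$, so the statement is best read as a theorem schema. There is no real obstacle; the positivity hypothesis is specifically designed to eliminate the one literal that would break monotonicity, and every other step is bookkeeping.
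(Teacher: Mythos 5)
Your proof is correct and follows exactly the route the paper intends: the paper dismisses this lemma with ``We can easily prove the following lemma by induction on the construction of formulas,'' and your external induction on the negation-normal-form structure, with the literal $t \in X$ as the only nontrivial base case and $t \notin X$ excluded by positivity, is precisely that argument spelled out. Your remark that this is a theorem schema with a separate $\RCAo$-proof for each $\varphi$ is an accurate and worthwhile clarification.
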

This lemma implies that if we identify a formula $\varphi(n,X)$ with the operator $X \mapsto \{n: \varphi(n,X)\}$, then any $X$-positive formula defines a monotone operator.

\begin{theorem}[Avigad \cite{MR1412508}, B\"{a}rtschi \cite{bartschi2021atro}]\label{atr fp equiv rm}
  The following assertions are equivalent over $\RCAo$:
  \begin{enumerate}
    \item $\ATRo$,
    \item the existence of a fixed point for positive $\Sigma^0_2$ operators on $\mathcal{P}(\mathbb{N})$,
    \item the existence of a fixed point for monotone $\Delta^1_1$ operators on $\mathcal{P}(\mathbb{N})$.
  \end{enumerate}
\end{theorem}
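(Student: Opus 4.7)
The plan is to prove the cycle $(3)\Rightarrow(2)\Rightarrow(1)\Rightarrow(3)$ over $\RCAo$. The implication $(3)\Rightarrow(2)$ is immediate: every $\Sigma^0_2$ formula is arithmetical and hence simultaneously $\Sigma^1_1$ and $\Pi^1_1$, while the preceding lemma says an $X$-positive formula always defines a monotone operator, so any positive $\Sigma^0_2$ operator is a special case of a monotone $\Delta^1_1$ operator, and (3) applies directly.

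For $(2)\Rightarrow(1)$, I would fix a well-order $W$ and a set $A$ and engineer a positive $\Sigma^0_2$ operator $\Gamma$ on subsets of $|W|\times\omega\times\{0,1\}$ whose fixed points carry the jump hierarchy of $(W,A)$. The two tracks of the auxiliary $\{0,1\}$-coordinate record membership and non-membership separately: $(i,n,0)$ is intended as ``$n\in X_i$'' and $(i,n,1)$ as ``$n\notin X_i$''. This device lets both of the naturally dual clauses $\Phi_n^Y(n)\!\downarrow$ (which is $\Sigma^0_1$) and $\Phi_n^Y(n)\!\uparrow$ (which is $\Pi^0_1$) be phrased with $X$ appearing only positively, by reading oracle queries ``$k\in Y$'' and ``$k\notin Y$'' off the 0- and 1-tracks of $X$ respectively; the disjunction of the two clauses is then positive $\Sigma^0_2$. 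Noting first that (2) already implies $\ACAo$ (apply (2) to the parameter-constant operator $X\mapsto Y'$, whose unique fixed point is $Y'$), one then carries out an arithmetic transfinite induction along $W$, available in $\ACAo$, to show that at every level both tracks of any fixed point are exact complements and the 0-track computes the Turing jump of the lower levels, producing the required witness for $\Hier(W,A,\cdot)$.

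For $(1)\Rightarrow(3)$, I would use the pseudo-hierarchy method available in $\ATRo$. Given a monotone $\Delta^1_1$ operator $\Gamma$, the idea is to iterate $\Gamma$ along a pseudo-well-order $L^*$ furnished by the pseudo-hierarchy method applied to a suitable $\Sigma^1_1$ property that $\ATRo$ verifies for every genuine well-order $W$, namely that $W$ admits a jump hierarchy along which $\Gamma$ becomes arithmetical and the iterate $X_\ell=\Gamma(\bigcup_{k<_W\ell}X_k)$ exists for every $\ell\in|W|$. The ill-foundedness of $L^*$, together with monotonicity of $\Gamma$ along the iteration, gives enough room to extract a set $Y$ with $\Gamma(Y)=Y$, as carried out in B\"artschi \cite{bartschi2021atro}.

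The main obstacle I anticipate is the correctness of the two-track encoding in $(2)\Rightarrow(1)$: because (2) supplies merely some fixed point and not necessarily the least one, I must check that the positivity of $\Gamma$ together with the fixed-point equation still forces the two tracks to be exact complements at every level of $W$ and that the 0-track coincides with the Turing jump of the lower levels rather than some larger set. This is precisely the combinatorial core of Avigad's original argument \cite{MR1412508} and would have to be reproduced level by level along $W$ using the $\ACAo$-consequence already extracted from (2).
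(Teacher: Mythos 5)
The paper states this theorem only as a citation to Avigad and B\"artschi and gives no proof of its own, so there is nothing internal to compare against directly; your outline follows the standard cited arguments. In particular, your two-track positivization in $(2)\Rightarrow(1)$ is exactly the device the paper reproduces (in Weihrauch form) in the proof of Theorem \ref{atr2 is reducible to fp}, where the auxiliary marker clauses $(\ast)$ and $(\ast\ast)$ deal with the fact that an arbitrary fixed point need not be the least one --- the issue you correctly identify as the combinatorial core --- and where transfinite induction along the order shows the two tracks are complementary at every level. The only place your sketch is thinner than the cited sources is $(1)\Rightarrow(3)$: for a genuinely $\Delta^1_1$ (rather than $\Sigma^0_2$) monotone operator, arranging that each stage of the iteration is definable and that the whole iteration can be packaged into a $\Sigma^1_1$ statement amenable to the pseudo-hierarchy method is where B\"artschi's contribution lies, and it is not merely a matter of the operator ``becoming arithmetical'' relative to a jump hierarchy; since you explicitly defer to \cite{bartschi2021atro} for that step, the outline is acceptable as a proof sketch of an attributed result.
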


\subsection{Correlation between Weihrauch degrees and reverse mathematics}
As we mentioned, both of Weihrauch degrees and reverse mathematics measure the complexity of mathematical theorems.
In fact, they are deeply related, and many Weihrauch problems come from reverse mathematics.

\subsubsection{Weihrauch problems defined by formulas of second-order arithmetic}
In the following, we introduce a way to translate $\L_2$ formulas to Weihrauch problems.
\begin{definition}\label{def of ATR etc}
  Let $\theta(X),\eta(X,Y)$ be $\L_2$ formulas such that the intended model $(\omega,\mathcal{P}(\omega))$ satisfies:
  \begin{itemize}
    \item there exists an $X$ such that $\theta(X)$,
    \item for any $X$ with $\theta(X)$, there exists a $Y$ such that $\eta(X,Y)$.
  \end{itemize}
  In this case, we say the pair $(\theta,\eta)$ is an $\L_2$ representation of a Weihrauch problem, and
  we define a Weihrauch problem $\Pb(\theta,\eta)$ by
  $\dom(\Pb(\theta,\eta)) = \{X : \theta(X)\}$ and $\Pb(\theta,\eta)(X) = \{Y : \eta(X,Y)\}$.
  We sometimes write $\Pb(\theta,\eta)$ as follows:
  \begin{description}
    \item[ \sf Input] $X$ such that $\theta(X)$.
    \item[ \sf Output] $Y$ such that $\eta(X,Y)$.
  \end{description}

\end{definition}

\begin{remark}\label{remark of input/output of problems}
    We may say that a problem $\P$ has an $\L_2$ representation if there are $\theta$ and $\eta$ such that $\P = \Pb(\theta,\eta)$. However, the choice of the pair of $(\theta,\eta)$ is not unique in general even if it has an $\L_2$ representation.

    In comparison with reverse mathematics, we sometimes consider the complexity of an $\L_2$ representation of a problem rather than the problem itself.
\end{remark}

\begin{definition}
  For an $\L_2$ representation $(\theta,\eta)$, we define $\L_2(\theta,\eta)$ as the $\L_2$ formula $\forall X(\theta(X) \to \exists Y \eta(X,Y))$.

  Let $\Gamma,\Gamma'$ be classes of formulas and $\theta \in \Gamma,\eta \in \Gamma'$.
  We say $(\theta,\eta)$ is a $(\Gamma,\Gamma')$-representation for $\P$ if $\P = \Pb(\theta,\eta)$ holds.
  In this case, we also say $\P$ is $(\Gamma,\Gamma')$-representable.
  If $\Gamma = \Gamma' = \Sigma^1_0$, then we say the representation $(\theta,\eta)$ is arithmetical.
\end{definition}

\begin{remark}\label{remark of L_2(P)}
  We note that $\L_2(\theta,\eta)$ is defined for an $\L_2$ representation ($\theta,\eta)$ of a problem, rather than the problem itself.
  Still, we may write $\L_2(\P)$ if the representation for $\P$ is clear from the context.
\end{remark}

Throughout this paper, we fix canonical $\L_2$ representations $(\theta,\eta)$ for $\ATR$
and $(\theta',\eta')$ for $\ATR_2$ as follows:
\begin{itemize}
  \item $\theta(W,A)$ is the $\Pi^1_1$ formula stating that $W$ is a well-order.
  \item $\eta(W,A,H)$ is $\Hier(W,A,H)$ (see Definition \ref{Definitions of ATR etc}).
  \item $\theta'(L,A)$ is the arithmetical formula stating that $L$ is a linear order.
  \item $\eta'(L,A,H)$ is the arithmetical formula stating that [$\Hier(L,A,H)$ or $H$ is an infinite descending sequence of $L$].
\end{itemize}

\begin{example}
  The following three $\L_2 $ theories are equivalent.
  \begin{enumerate}
    \item $\ATRo$,
    \item $\RCAo + \L_2(\ATR)$,
    \item $\RCAo + \L_2(\ATR_2)$.
  \end{enumerate}
\end{example}

By this translation and $\omega$-models, we can introduce another way to compare the complexity of problems.
\begin{definition}[$\omega$-model reductions]
  Let $(\theta,\eta)$ and $(\theta',\eta')$ be representations.
  We say $(\theta,\eta)$ is $\omega$-model reducible to $(\theta',\eta')$ (write $(\theta,\eta) \leq_{\omega} (\theta',\eta')$) if any $\omega$-model of $\RCAo + \L_2(\theta',\eta')$  satisfies $\L_2(\theta,\eta)$.
  We also define arithmetical $\omega$-model reductions $\leq_{\omega}^a$ by replacing $\omega$-model of $\RCAo + \L_2(\theta',\eta')$ with $\omega$-model of $\ACAo + \L_2(\theta',\eta')$.
\end{definition}

\begin{remark}\label{four reductions}
  We have introduced four kinds of reductions, $\leq_W,\leq_W^a,\leq_{\omega},\leq_{\omega}^a$.
  For $\L_2$ representations of problems, Weihrauch reduction is the strongest of these, and arithmetical $\omega$-model reduction is the weakest in the following sense.

  Let $\P$ and $\Q$ be Weihrauch problems such that $\P \leq_W \Q$.
  If there are $\L_2$ formulas $\theta_{\P},\eta_{\P},\theta_{\Q},\eta_{\Q}$ such that
  \begin{itemize}
    \item $\P = \Pb(\theta_{\P},\eta_{\P}), \Q = \Pb(\theta_{\Q},\eta_{\Q})$,
    \item $\theta_{\P},\eta_{\Q} \in \Sigma^1_1$ and $\eta_{\P},\theta_{\Q} \in \Pi^1_1$,
  \end{itemize}
  then, $(\theta_{\P},\eta_{\P}) \leq_{\omega} (\theta_{\Q},\eta_{\Q})$.
  Hence, $(\theta_{\P},\eta_{\P}) \leq^a_{\omega} (\theta_{\Q},\eta_{\Q})$ also holds.
  Similarly, $\P \leq_W^a \Q$ implies $(\theta_{\P},\eta_{\P}) \leq^a_{\omega} (\theta_{\Q},\eta_{\Q})$ for
  the representations as above.

  As a contraposition, for any problems $\P$ and $\Q$, if they have representations  $(\theta_{\P},\eta_{\P}) \in (\Sigma^1_1,\Pi^1_1)$ and $(\theta_{\Q},\eta_{\Q}) \in (\Pi^1_1,\Sigma^1_1)$ such that
  $(\theta_{\P},\eta_{\P}) \not \leq_{\omega}^a (\theta_{\Q},\eta_{\Q})$,
  then $\P \not \leq_W^a \Q$ and hence $\P \not \leq_W \Q$.
\end{remark}

\subsubsection{G\"{o}del numbers and universal formulas}
In reverse mathematics, we can consider theorem schemas as in Theorem \ref{atr fp equiv rm}.
However, we can use only subsets of $\omega$ for an input for a problem.
Thus, to represent such theorem schemas as Weihrauch problems, we code formulas by natural numbers and make decoders as formulas.
Such natural numbers are called the G\"{o}del numbers, and such decoders are called universal formulas.

\begin{theorem}[Universal formulas] (See also \cite[Definition VII.1.3.]{MR2517689})
  Let $k \in \omega, k >0$. Then there exists a universal $\Sigma^0_k$ formula. That is,
  there exists a $\Sigma^0_k$ formula $\sigma$ such that for any $\Sigma^0_k$ formula $\varphi(\vec{X},\vec{x})$,
  \begin{align*}
    \forall \vec{X},\vec{x}[\varphi(\vec{X},\vec{x}) \leftrightarrow \sigma(\GN{\varphi},\langle \vec{X} \rangle, \langle \vec{x} \rangle)]
  \end{align*}
  holds.
  Here, $\GN{\varphi}$ is the Gödel number of $\varphi$,
  $\langle \vec{X} \rangle $ is a subset of $\omega$ corresponding to $\vec{X}$ by a canonical way (see also Remark \ref{Rmk code of seq}.)
  and $\langle \vec{x} \rangle$ is a code of $\vec{x}$.
\end{theorem}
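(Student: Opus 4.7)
The plan is to proceed by induction on $k$, building on a satisfaction predicate for $\Delta^0_0$ formulas. First, fix a primitive recursive Gödel numbering of $\L_2$ formulas together with primitive recursive substitution functions: given the code $e$ of a formula with a specified free variable and a value $y$, $\mathrm{sub}(e,y)$ returns the code of the result of substituting $y$ for that variable. By Remark \ref{Rmk code of seq}, tuples $\vec{X}$ and $\vec{x}$ are coded as $\langle \vec{X}\rangle$ and $\langle \vec{x}\rangle$ in a computable way, and the substitution machinery is set up uniformly for these codings.

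For the base case $k=1$, I would first construct a universal formula for $\Delta^0_0$ satisfaction. This is the standard observation that the satisfaction relation for $\Delta^0_0$ formulas is $\Delta^0_1$-definable in $\RCAo$: because all quantifiers are bounded, one writes a primitive recursive evaluator that, given a code $e$ and parameters $\langle \vec{X}\rangle, \langle\vec{x}\rangle$, decides in a finite computation whether $e$ codes a true $\Delta^0_0$ sentence with those parameters, yielding a formula $\mathrm{Sat}_0(e,\langle\vec{X}\rangle,\langle\vec{x}\rangle)$. The universal $\Sigma^0_1$ formula is then
\[
\sigma_1(e,\langle\vec{X}\rangle,\langle\vec{x}\rangle)\;\equiv\; \exists y\, \mathrm{Sat}_0(\mathrm{sub}(e,y),\langle\vec{X}\rangle,\langle\vec{x}\rangle),
\]
where the Gödel numbering is arranged so that any $\Sigma^0_1$ formula is syntactically $\exists y\, \psi(y,\vec{X},\vec{x})$ with $\psi \in \Delta^0_0$.

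For the induction step, assume a universal $\Sigma^0_k$ formula $\sigma_k$ has been constructed. Up to logical equivalence, every $\Sigma^0_{k+1}$ formula has the form $\exists y\,\lnot \psi(y,\vec{X},\vec{x})$ with $\psi \in \Sigma^0_k$, so I set
\[
\sigma_{k+1}(e,\langle\vec{X}\rangle,\langle\vec{x}\rangle)\;\equiv\; \exists y\, \lnot \sigma_k(\mathrm{sub}(e,y),\langle\vec{X}\rangle,\langle\vec{x}\rangle),
\]
which is syntactically $\Sigma^0_{k+1}$ since prefixing an existential quantifier to a $\Pi^0_k$ matrix yields a $\Sigma^0_{k+1}$ formula.

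The main obstacle is the careful formalization of $\mathrm{Sat}_0$: one must verify both a $\Sigma^0_1$ and a $\Pi^0_1$ description of $\Delta^0_0$ satisfaction and check they agree in $\RCAo$, which relies on $\Sigma^0_1$-induction to justify the recursion on formula complexity and on the bounded nature of quantifiers to keep the evaluation primitive recursive. Once the base case is cleanly set up and the Gödel numbering is chosen so that substitution and the syntactic form $\exists y\,\lnot(\cdots)$ are handled uniformly, the induction step is a purely syntactic manipulation. The full detailed construction is carried out in Simpson \cite[Definition VII.1.3]{MR2517689}, and we rely on it here.
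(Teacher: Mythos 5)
Your construction is correct and matches the standard one: the paper gives no proof of its own here, deferring entirely to Simpson \cite[Definition VII.1.3]{MR2517689}, and your argument (a primitive recursive satisfaction predicate for bounded formulas, followed by induction on $k$ via prenexation and quantifier prefixing) is precisely that standard construction. No gaps of substance; the points you flag as requiring care (effective normalization to the form $\exists y\,\lnot\psi$ and the $\Sigma^0_1$-induction needed to verify $\mathrm{Sat}_0$) are exactly the ones handled in the cited reference.
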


By universal formulas, we can realize a problem which admit formulas as an input.
For example, we input $\{\GN{\varphi}\}$ instead of $\varphi$.

\section{Fixed points for monotone operators}
In this section, we consider the Weihrauch degree of the existence of fixed points for monotone operators on the Cantor space.
As we have seen in Theorem \ref{atr fp equiv rm}, when we consider arithmetically definable operators, both of the fixed point theorem for positive operators and the fixed point theorem for monotone operators are equivalent to $\ATRo$.
We first show that these two kinds of fixed point theorems are also Weihrauch equivalent for $\Sigma^0_2$ definable operators. Then, we will compare the Weihrauch degrees of these fixed point theorems and $\ATR_2$, and show that they are not Weihrauch reducible to $\ATR_2$. This may mean that some arguments beyond hyperarithmetical should be needed for deducing the fixed point theorem from $\ATRo$.

For the comparison of monotone operators and positive operators,
we show that a $\Sigma^0_2$ monotone operator can be defined by a positive $\Sigma^0_2$ formula.
\begin{lemma}
  (See also \cite[Lemma VIII.2.4.]{MR2517689})
  Let $\theta(X,Y,\vec{V})$ be an $X$-positive $\Pi^0_1$ formula.
  Then, there exists an $X$-positive $\Pi^0_1$ formula $\theta'(X,\vec{V})$ such that
  $\WKLo \vdash \forall X,\vec{V} (\exists Y \theta(X,Y,\vec{V}) \leftrightarrow \theta'(X,\vec{V}))$.
  Moreover, we can effectively find $\theta'$ from $\theta$.
\end{lemma}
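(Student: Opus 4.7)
The plan is to reformulate the existential $\exists Y$ as the assertion that a suitable binary tree has a path, then invoke Weak K\"{o}nig's Lemma to replace ``has a path'' by ``is infinite'', which can be expressed by a single universal quantifier followed by bounded quantification. Since $X$-positivity is preserved by bounded quantifiers and positive propositional connectives, the resulting formula will automatically be $X$-positive $\Pi^0_1$.

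First, I would put $\theta$ in prenex form $\theta(X,Y,\vec{V}) \equiv \forall n\, \psi(n,X,Y,\vec{V})$ with $\psi$ a bounded $X$-positive formula. Because $\psi$ is bounded, one can read off a primitive recursive function $b$ such that for each $n$ the truth of $\psi(n,X,Y,\vec{V})$ depends on $Y$ only through $Y\cap\{0,\dots,b(n)\}$. I would then define the tree $T=T_{X,\vec{V}}\subseteq 2^{<\omega}$ by declaring $\sigma\in T$ iff $\psi(n,X,\sigma,\vec{V})$ holds for every $n$ with $b(n)\le|\sigma|$, interpreting $\sigma$ as coding an initial segment of a characteristic function. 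The candidate formula is
\[
\theta'(X,\vec{V}) \;\equiv\; \forall k\, \exists \sigma < 2^k\, \bigl(|\sigma|=k \,\land\, \sigma \in T_{X,\vec{V}}\bigr).
\]

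Next, I would verify the equivalence over $\WKLo$. The forward direction requires no WKL: from any $Y$ satisfying $\theta$, the initial segments of the characteristic function of $Y$ lie in $T$ at every length. For the reverse direction, if $T$ is infinite then Weak K\"{o}nig's Lemma produces a path $f\in[T]$, and setting $Y=\{i:f(i)=1\}$ yields a witness for $\theta$ because for each fixed $n$ the initial segment of $f$ of length $\max(n,b(n))+1$ already lies in $T$ and therefore forces $\psi(n,X,Y,\vec{V})$. To see that $\theta'$ has the required form, note that $\sigma\in T_{X,\vec{V}}$ unfolds to a bounded formula built from $\psi$ by a bounded universal quantifier and an implication whose antecedent $b(n)\le|\sigma|$ does not mention $X$; both operations preserve $X$-positivity, and the outer $\forall k$ is the only unbounded quantifier, so $\theta'$ is an $X$-positive $\Pi^0_1$ formula.

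No deep obstacle arises; the only bookkeeping concern is verifying that substituting the finite string $\sigma$ for the set $Y$ in $\psi$ is sound as soon as $|\sigma|\ge b(n)$, and that $X$-positivity survives each syntactic step. Effectivity of $\theta\mapsto\theta'$ is immediate from the construction: the function $b$, the tree-membership predicate, and hence $\theta'$ itself are all read off syntactically from $\theta$.
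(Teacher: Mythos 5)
Your proposal is correct and follows essentially the same route as the paper: rewrite the $\Pi^0_1$ matrix in a normal form depending only on initial segments of $Y$, encode potential witnesses as a binary tree, express $\exists Y\theta$ as ``the tree has nodes at every level,'' and use Weak K\"onig's Lemma for the reverse implication while checking that $X$-positivity survives. The only difference is cosmetic: the paper absorbs your explicit dependence bound $b(n)$ into the normal form $\forall n\,\rho(X,Y[n],\vec V)$, whereas you carry it along by hand.
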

\begin{proof}
  Write $\theta(X,Y,\vec{V}) \equiv \forall n \rho(X,Y[n],\vec{V})$ by an $X$-positive $\Sigma^0_0$ formula $\rho$.
  Define $\theta'(X)$ by
  \begin{align*}
    \forall n \exists \sigma \in 2^{<\N} (|\sigma| = n \land \rho(X,\sigma,\vec{V})).
  \end{align*}
  Then $\theta'(X,\vec{V})$ is an $X$-positive $\Pi^0_1$ formula.

  We show that $\WKLo$ proves $\forall X,\vec{V} (\exists Y \theta(X,Y,\vec{V}) \leftrightarrow \theta'(X,\vec{V}))$.
  Assume $\WKLo$.
  Take arbitrary $X$ and $\vec{V}$.
  First, assume $\exists Y \theta(X,Y,\vec{V})$. Then, for any $n$, $\rho(X,Y[n],\vec{V})$ holds.
  Conversely, assume $\forall n  \exists \sigma \in 2^{<\N} (|\sigma| = n \land \rho(X,\sigma,\vec{V}))$.
  Define a binary tree $T$ by $T = \{\sigma \in 2^{<\N} : \rho(X,\sigma,\vec{V})\}$.
  Then, by the assumption, $T$ is infinite. Therefore, $T$ has a path by weak K\H{o}nig's lemma.
  Let $f$ be a path of $T$. Then, $\forall n \rho(X,f[n],\vec{V})$ holds.
  Therefore, this $f$ witnesses $\exists Y \theta(X,Y,\vec{V})$.
\end{proof}

\begin{theorem}
  Let $\varphi(z,X,V)$ be a $\Sigma^0_2$ formula and $A \in \mathcal{P}(\omega)$ such that
  the operator $\Gamma_{\varphi,A}$ on $\mathcal{P}(\omega)$ defined by $X \mapsto \{z: \varphi(z,X,A)\}$ is monotone.
  Then, there exists an $X$-positive $\Sigma^0_2$ formula $\psi(z,X,U)$ such that
  $\forall z,X(\varphi(z,X,A) \leftrightarrow \psi(z,X,A))$.
  Moreover, we can effectively find $\psi$ from $\varphi$.
\end{theorem}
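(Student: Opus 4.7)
The plan is to exploit the monotonicity of $\Gamma_{\varphi,A}$ to rewrite $\varphi(z,X,A)$ as ``there exists $Y\subseteq X$ with $\varphi(z,Y,A)$'' and then to eliminate the second-order existential $\exists Y$ via the preceding lemma. Because the condition $Y\subseteq X$ mentions $X$ only positively, the resulting formula will be $X$-positive, and because $\varphi$ itself was $\Sigma^0_2$, the resulting formula will still be $\Sigma^0_2$.

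Write $\varphi(z,X,V)\equiv \exists m\,\forall n\,\rho(z,X,V,m,n)$ with $\rho$ a $\Sigma^0_0$ formula. For each $m$, consider
\[
  \theta(X,Y,z,V,m) \;:\equiv\; (Y\subseteq X)\wedge \forall n\,\rho(z,Y,V,m,n).
\]
Unfolding $Y\subseteq X$ as $\forall i\,(i\notin Y\vee i\in X)$ makes $\theta$ manifestly $X$-positive $\Pi^0_1$, since the conjunct containing $\rho$ does not mention $X$ at all. Treating $(z,V,m)$ as the parameter tuple $\vec{V}$ in the preceding lemma, we obtain an $X$-positive $\Pi^0_1$ formula $\theta'(X,z,V,m)$, effectively computed from $\theta$, for which
\[
  \WKLo\vdash \forall X,z,V,m\,\bigl(\exists Y\,\theta(X,Y,z,V,m)\leftrightarrow \theta'(X,z,V,m)\bigr).
\]
Set $\psi(z,X,U):=\exists m\,\theta'(X,z,U,m)$. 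This is $X$-positive $\Sigma^0_2$ and effectively obtained from $\varphi$.

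To verify $\varphi(z,X,A)\leftrightarrow \psi(z,X,A)$, note that the intended model $(\omega,\mathcal{P}(\omega))$ satisfies $\WKLo$, so the equivalence supplied by the preceding lemma is available. For the direction $\varphi\to\psi$: given $m$ witnessing $\varphi(z,X,A)$, the trivial choice $Y:=X$ makes $\theta(X,Y,z,A,m)$ hold, hence $\theta'(X,z,A,m)$ holds and $\psi(z,X,A)$ follows; this direction does not use monotonicity. For $\psi\to\varphi$: the lemma converts $\theta'$ into some $Y$ with $Y\subseteq X$ and $\varphi(z,Y,A)$, and monotonicity of $\Gamma_{\varphi,A}$ then promotes this to $\varphi(z,X,A)$.

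The main obstacle is producing an arithmetical, $X$-positive presentation of ``$\exists Y\subseteq X$ with $\varphi(z,Y,A)$''. This is precisely what the preceding lemma supplies, by encoding such a $Y$ as a path through a tree of finite approximations whose infinity is expressible positively in $X$ and by extracting that path via weak K\"{o}nig's lemma. Monotonicity is confined to the $\psi\to\varphi$ direction, where one transfers $\varphi$ from the extracted witness subset back to $X$.
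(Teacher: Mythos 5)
Your proof is correct and follows essentially the same route as the paper: use monotonicity to replace $\varphi(z,X,A)$ by $\exists Y\subseteq X\,\varphi(z,Y,A)$, observe that $(Y\subseteq X)\wedge\forall n\,\rho(z,Y,V,m,n)$ is an $X$-positive $\Pi^0_1$ matrix, and eliminate the quantifier $\exists Y$ with the preceding lemma, reinstating the outer number quantifier to land in $X$-positive $\Sigma^0_2$. The only cosmetic difference is that you verify the two directions of the final equivalence directly rather than first isolating $\varphi(z,X,A)\leftrightarrow\exists Y\subseteq X\,\varphi(z,Y,A)$ as a standalone step.
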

\begin{proof}
  Let $\varphi$ and $A$ be as above.
  We first note that $\forall z,X (\varphi(z,X,A) \leftrightarrow \exists Y \subseteq X \varphi(z,Y,A))$ holds.
  Indeed, if the left-hand side holds, then we can take $Y$ as $X$.
  Conversely, assume $\varphi(z,Y,A)$ holds for some $Y \subseteq X$.
  Then, $z \in \Gamma_{\varphi,A}(Y)$.
  Now $\Gamma_{\varphi,A}(Y) \subseteq \Gamma_{\varphi,A}(X)$ by the monotonicity and hence $z \in \Gamma_{\varphi,A}(X)$. This means that $\varphi(z,X,A)$ holds.

  Therefore, it is enough to find an $X$-positive $\Sigma^0_2$ formula equivalent to $\exists Y \subseteq X \varphi(z,Y,A)$.
  Write $\varphi(z,X,V) \equiv \exists y \forall x \theta'(x,y,z,X,V)$ by a $\Sigma^0_0$ formula $\theta'$.
  Then, $\exists Y \subseteq X \varphi(z,Y,A)$ can be written as
  \begin{equation*}
    \exists Y (Y \subseteq X \land \exists y \forall x \theta'(x,y,z,Y,A)),
  \end{equation*}
  which is equivalent to
  \begin{equation*}
    \exists y \exists Y (\forall u (u \in Y \to u \in X) \land \forall x \theta'(x,y,z,Y,A)).
  \end{equation*}
  Since the part $(\forall u (u \in Y \to u \in X) \land \forall x \theta'(x,y,z,Y,A))$ is $X$-positive and $\Pi^0_1$, there exists an $X$-positive $\Pi^0_1$ formula $\theta''(y,z,X,A)$ such that
  $\exists Y(\forall u (u \in Y \to u \in X) \land \forall x \theta'(x,y,z,Y,A))$ is equivalent to $\theta''(y,z,X,A)$ by the previous lemma.
  Therefore, we have $\forall z,X(\varphi(z,X,A) \leftrightarrow \exists y \theta''(y,z,X,A))$.
  By putting $\psi(z,X,U) \equiv \exists y \theta''(y,z,X,U)$, we have the desired conditions.
\end{proof}

\begin{definition}
  Let $k \in \omega, k>0$. We define $\FP(\Sigma^0_k)$  as follows.
  \begin{itembox}[l]{$\FP(\Sigma^0_k)$}
  \begin{description}
    \item[ \sf Input] A set $A$ and a Gödel number $\GN{\varphi}$ of an $X$-positive $\Sigma^0_k$ formula $\varphi(n,X,Y)$.
    \item[ \sf Output] A fixed point of the operator $X \mapsto \{n: \varphi(n,X,A)\}$.
  \end{description}
\end{itembox}
\end{definition}

\begin{corollary}
The problem $\FP(\Sigma^0_2)$ is Weihrauch equivalent to the following problem:
\begin{itembox}[l]{$\MFP(\Sigma^0_2)$}
\begin{description}
  \item[ \sf Input] A set $A$ and a Gödel number $\GN{\varphi}$ of a $\Sigma^0_2$ formula $\varphi(n,X,Y)$ such that $X \mapsto \{n:\varphi(n,X,A)\}$ is monotone.
  \item[ \sf Output] A fixed point of the operator $X \mapsto \{n: \varphi(n,X,A)\}$.
\end{description}
\end{itembox}
\end{corollary}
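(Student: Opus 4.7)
The claim splits into two Weihrauch reductions, one trivial and one relying on the preceding theorem. I will handle each in turn.

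For $\FP(\Sigma^0_2) \leq_W \MFP(\Sigma^0_2)$, the lemma stated just before Theorem \ref{atr fp equiv rm} tells us that every $X$-positive formula defines a monotone operator. Hence every input $(A,\GN{\varphi})$ to $\FP(\Sigma^0_2)$ already lies in the promise domain of $\MFP(\Sigma^0_2)$, and a fixed point of the operator is a fixed point of the operator. So both the forward functional $\Phi$ and the backward functional $\Psi$ can be taken to be the identity.

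For $\MFP(\Sigma^0_2) \leq_W \FP(\Sigma^0_2)$, I would use the preceding theorem in its effective form. Given an input $(A,\GN{\varphi})$ with the promise that $X \mapsto \{n : \varphi(n,X,A)\}$ is monotone, the theorem produces a Gödel number $\GN{\psi}$ of an $X$-positive $\Sigma^0_2$ formula $\psi(n,X,U)$ such that
\[
  \forall n, X\ \bigl(\varphi(n,X,A) \leftrightarrow \psi(n,X,A)\bigr),
\]
and the map $\GN{\varphi} \mapsto \GN{\psi}$ is computable (the construction in the proof of the theorem is a purely syntactic manipulation of formulas, independent of $A$). Set $\Phi(A,\GN{\varphi}) = (A,\GN{\psi})$. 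Since $\varphi$ and $\psi$ define the same operator at parameter $A$, they have exactly the same fixed points, so any $Y$ returned by $\FP(\Sigma^0_2)$ on $(A,\GN{\psi})$ is already a valid output, and $\Psi$ can be taken to be the projection onto the $Y$-coordinate.

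No serious obstacle arises: the substantive work has been done in the preceding theorem, and my only task is to observe that its construction is literally a computable syntactic operation, so that it lifts to a Weihrauch reduction rather than merely a provability statement. The one book-keeping point I would double-check is that the promise on $A$ is used only to establish the equivalence of $\varphi(\cdot,\cdot,A)$ and $\psi(\cdot,\cdot,A)$, while the transformation $\GN{\varphi} \mapsto \GN{\psi}$ itself does not depend on $A$; this is what makes the forward functional $\Phi$ computable rather than merely well-defined.
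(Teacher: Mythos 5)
Your proposal is correct and follows essentially the same route as the paper: the forward direction uses that positive formulas define monotone operators so the identity functionals suffice, and the reverse direction invokes the effective syntactic translation $\GN{\varphi}\mapsto\GN{\psi}$ from the preceding theorem. Your extra remark that the translation depends only on $\varphi$ and not on $A$ (with monotonicity at $A$ used only to verify the equivalence) is exactly the point the paper's ``we can effectively find $\psi$ from $\varphi$'' is recording.
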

\begin{proof}
  Since a positive formula defines a monotone operator, $\dom(\FP(\Sigma^0_2)) \subseteq \dom(\MFP(\Sigma^0_2))$.
  In addition, for any input for $\FP(\Sigma^0_2)$, the outputs of $\FP(\Sigma^0_2)$ and $\MFP(\Sigma^0_2)$ are the same.
  Thus $\FP(\Sigma^0_2) \leq_W \MFP(\Sigma^0_2)$ holds.

  Conversely, for any input for $\MFP(\Sigma^0_2)$, we can effectively find an input for $\FP(\Sigma^0_2)$ such that they define the same monotone operator by the above lemma. Thus, $\MFP(\Sigma^0_2) \leq_W \FP(\Sigma^0_2)$.
\end{proof}

\begin{corollary}
  The problem $\FP(\Sigma^0_2)$ is parallelizable.
\end{corollary}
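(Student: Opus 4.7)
The plan is to establish $\widehat{\FP(\Sigma^0_2)} \leq_W \FP(\Sigma^0_2)$; the reverse reduction is immediate from projecting onto a single coordinate. By the previous corollary $\FP(\Sigma^0_2) \equiv_W \MFP(\Sigma^0_2)$, so it suffices to reduce $\widehat{\FP(\Sigma^0_2)}$ to $\MFP(\Sigma^0_2)$. This swap is crucial because it lets me drop the syntactic $X$-positivity requirement on the combined formula and work instead with semantic monotonicity, which is exactly what the combining construction delivers.

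Given an input $\langle (A_i,\GN{\varphi_i}) \rangle_{i\in\omega}$ to $\widehat{\FP(\Sigma^0_2)}$, I code the pairs $(A_i,\GN{\varphi_i})$ uniformly into a single set $B$ (via the canonical bijection of Remark~\ref{Rmk code of seq}, using that a G\"{o}del number can be recovered from the set coding a singleton). Fixing a universal $\Sigma^0_2$ formula $\sigma$, I construct a single $\Sigma^0_2$ formula $\psi(k,X,U)$ which, reading $k$ as a pair $(i,n)$, decodes $e_i$ and $A_i$ from $U$ and asserts $\sigma(e_i,n,X_i,A_i)$, where $X_i = \{m:(i,m)\in X\}$ is the $i$-th column of $X$. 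Both $B$ and $\GN{\psi}$ are produced computably from the original sequence.

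To see that the operator $\Gamma : X \mapsto \{k:\psi(k,X,B)\}$ is monotone, observe that $X\subseteq X'$ implies $X_i\subseteq X'_i$ for every $i$. Since each $\varphi_i(\cdot,\cdot,A_i)$ is $X$-positive, the corresponding slice operator is monotone, whence $\Gamma$ is monotone as well. Hence $(B,\GN{\psi})$ is a valid input to $\MFP(\Sigma^0_2)$. Any fixed point $X$ of $\Gamma$ satisfies $X_i = \{n:\varphi_i(n,X_i,A_i)\}$ for every $i$, so its columns form the desired output of $\widehat{\FP(\Sigma^0_2)}$ and can be extracted by a computable post-processing.

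The main technical obstacle is bookkeeping: arranging the coding of $B$, of the pairing on $k$, and of the slicing of $X$ so that $\GN{\psi}$ is produced computably and uniformly from $\langle \GN{\varphi_i}\rangle_i$ via the universal $\Sigma^0_2$ formula. Once those conventions are pinned down, the monotonicity of $\Gamma$ and the correspondence between fixed points of $\Gamma$ and sequences of fixed points of the $\varphi_i$ are routine to verify, and the Weihrauch reduction follows.
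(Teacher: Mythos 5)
Your proposal is correct and follows essentially the same route as the paper: pass to $\MFP(\Sigma^0_2)$ via the preceding corollary, combine the sequence of operators into a single operator acting columnwise through the universal $\Sigma^0_2$ formula, check monotonicity slice by slice, and read off the individual fixed points from the columns of a fixed point of the combined operator. No gaps.
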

\begin{proof}
  It is enough to show that $\MFP(\Sigma^0_2)$ is parallelizable.

  Let $\sigma(e,x,X,Y)$ be a fixed $\Sigma^0_2$ universal formula. Let $\langle A_i \rangle_i$ be a sequence of sets and $\langle e_i \rangle_i$ be a sequence of numbers such that
  each $\sigma(e_i,x,X,A_i)$ defines a monotone operator.
  We will find a sequence $\langle X_i \rangle_i$ such that each $X_i$ is a fixed point of the operator defined by $\sigma(e_i,x,X,A_i)$.

  We define a $\Sigma^0_2$ formula $\varphi(x,X,Y)$ by
  \begin{align*}
    \varphi(n,X,Y) \equiv (\exists i,x \leq n) (n = (i,x) \land \sigma(e_i,x,X_i,Y_i)).
  \end{align*}
  Then for each $X = \langle X_i \rangle_i$,
  $\{n: \varphi(n,X,\langle A_i \rangle_i)\} = \bigcup_i \{(i,x): \sigma(e_i,x,X_i,A_i) \}$.
  Thus, $\varphi(n,X,\langle A_i \rangle_i)$ defines a monotone operator, and for any fixed point $\langle X_i \rangle_i$
  of this operator, each $X_i$ is a fixed point of the operator defined by $\sigma(e_i,x,X,A_i)$.
\end{proof}

Next, we compare $\ATR_2$ and $\FP(\Sigma^0_2)$.

\begin{theorem}\label{atr2 is reducible to fp}
  (Essentially due to Avigad, \cite{MR1412508})
    $\ATR_2 \leq_W \FP(\Sigma^0_2)$.
\end{theorem}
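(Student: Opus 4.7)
The plan is to adapt Avigad's proof in \cite{MR1412508} that positive $\Sigma^0_2$ fixed points imply $\ATRo$ to a uniform Weihrauch reduction from $\ATR_2$. Given an input $(L, A)$ of $\ATR_2$, I would produce a single positive $\Sigma^0_2$ formula $\varphi_{L,A}$ whose fixed points encode both a candidate jump hierarchy along $L$ starting from $A$ and sufficient information to extract a descending sequence when the hierarchy construction stalls on an ill-founded part of $L$.

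Concretely, a fixed point $X$ is read as a pair $(W, H)$ with $W \subseteq |L|$ and $H = (H_i)_{i \in |L|}$ a candidate sequence of jump stages. The formula $\varphi_{L,A}(n, X, Z)$ (with $Z$ the parameter slot for a canonical coding of $(L, A)$) is the disjunction of:
\begin{itemize}
\item[(a)] ``$n$ codes $i \in W$'', subject to $(\forall j)(j <_L i \to j \in W)$; this is $X$-positive $\Pi^0_1$, and hence $X$-positive $\Sigma^0_2$;
\item[(b)] ``$n$ codes $(i, k) \in H$'', subject to $i \in W$ together with Avigad's positive $\Sigma^0_2$ encoding of the Turing-jump step saying $k \in (\bigoplus_{j <_L i} H_j \oplus A)'$. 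Since the Turing jump is not monotone in its oracle, this is the delicate clause: following Avigad, one records positive witnesses of oracle values on both sides of the characteristic function, and fixed-point consistency enforces the correct jump.
\end{itemize}

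The resulting operator $\Gamma$ is $X$-positive, $\Sigma^0_2$, and monotone. For any fixed point $X = (W, H)$, the equation for $W$ reads $i \in W \Leftrightarrow (\forall j <_L i)(j \in W)$, so $W$ is downward closed in $L$ and $|L| \setminus W$ has no $<_L$-minimum. Hence either $W = |L|$, in which case the equation for $H$ makes $H$ the genuine jump hierarchy of $(L, A)$; or $|L| \setminus W \neq \varnothing$ is ill-founded, and an infinite descending sequence is computable from $X$ by starting at any $i_0 \in |L| \setminus W$ (found by search) and iteratively choosing some $j <_L i_n$ with $j \notin W$.

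The forward map $\Phi$ then sends $(L, A)$ to the input $(\GN{\varphi_{L,A}}, (L, A))$ of $\FP(\Sigma^0_2)$. The backward map $\Psi$ takes a fixed point $X$ and, running the hierarchy extraction from $H$ and the descending-sequence search in parallel, produces a single output realizing $\eta'(L, A, \cdot)$ in one of its two disjuncts, the branch being resolved by the $\Sigma^0_1$ event ``an element of $|L| \setminus W$ has been witnessed''. I expect the main obstacle to be clause (b), i.e., the positive $\Sigma^0_2$ encoding of the Turing-jump step: the non-monotonicity of the jump in its oracle makes this the delicate piece of Avigad's construction, and one must verify that the resulting operator is genuinely monotone and that fixed-point consistency along the well-founded initial segment of $L$ indeed forces the intended jump hierarchy there. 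The remaining pieces (the formula for $W$, the descending-sequence extraction, the parallel output assembly) are routine.
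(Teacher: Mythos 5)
There is a genuine gap, and it sits exactly where you flagged the construction as ``delicate,'' but the problem is not the positivity of the jump step --- it is your clause (a). As you define it, the fixed-point equation for $W$ is $i \in W \leftrightarrow (\forall j <_L i)(j \in W)$, which involves only $L$ and not $H$. This equation is satisfied by $W = |L|$ for \emph{every} linear order $L$, well-founded or not, and $\FP(\Sigma^0_2)$ returns an arbitrary fixed point, not the least one. So you can receive a fixed point with $W = |L|$ while $L$ is ill-founded, and then your case analysis asserts that ``the equation for $H$ makes $H$ the genuine jump hierarchy of $(L,A)$'' --- which is false, since no jump hierarchy along an arbitrary ill-founded $L$ need exist. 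What actually happens in Avigad's two-sided positive coding is that, beyond the well-founded part, the recorded bits become non-total (neither the $0$- nor the $1$-bit present) or inconsistent (both present); the fixed-point equation does not enforce a correct jump there, it merely stops constraining anything. Your $W$ has no mechanism to detect this, because its defining clause never looks at $H$.

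The paper's proof closes exactly this hole: besides the jump clause (your (b)), the formula $\psi$ has two extra positive clauses that, at each point $b$, record into designated coordinates whether the bits below $b$ are total and whether they are consistent. The resulting sets $S_1$ (totality fails below) and $S_2$ (consistency fails below) are computable from the fixed point, are upward closed, and have no $<_L$-minimal element; off $S_1 \cup S_2$ the bits provably form a genuine jump hierarchy, and if $S_1 \cup S_2$ is nonempty one extracts a descending sequence from it. (The paper also adjoins a top element $\infty$ to $L$ so that the branch is decided by the single computable query ``$\infty \in S_1 \cup S_2$?'', avoiding the non-terminating parallel search you propose for the case $W = |L|$.) To repair your argument you should replace the downward-closure equation for $W$ by these totality/consistency detection clauses, i.e., define the ``bad set'' in terms of the failure of the coding of $H$ rather than by an $H$-independent recursion on $L$.
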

\begin{proof}
We will follow the proof by Avigad.

  Let $\widetilde{L}$ be a linear order and $A$ be a set.
  We will find an output of $\ATR_2(\widetilde{L},A)$.

  First, we define a linear order $L$ as follows:
  \begin{align*}
    &|L| = |\widetilde{L}| \cup \{\infty\}, \\
    &\forall a,b \in |\widetilde{L}| (a <_L b \Leftrightarrow a<_{\widetilde{L}} b), \\
    &\forall a \in |\widetilde{L}|(a <_L \infty).
  \end{align*}
  Thus, $L$ is the liner order extending $\widetilde{L}$ with a maximum element $\infty$.

  For each formula $\varphi$ in the negation normal form, define a formula $\widehat{\varphi}(n,Z,b)$ by replacing, from $\varphi$, each subformula of the form $t \in X$ by
  \begin{align*}
    t_0 <_L b \land (t,1) \in Z
  \end{align*}
  and subformula of the form $t \not \in X$ by
  \begin{align*}
    t_0 \not <_L b \lor (t,0) \in Z.
  \end{align*}
  Here, $x_0$ denotes the first coordinate inverse of the pairing function, that is, any $x$ is written as $x = (x_0,x_1)$ for some $x_1$.
  By the same way, we also define $\widehat{\lnot \varphi}$ from $\lnot \varphi$.
  We note that for any set $Z,Y$ and $b \in |L|$,
  if $\forall c <_L b \forall l \exists ! k < 2 [((c,l),k) \in Z)]$ and $Y_{<_L b} = \{(c,l) : c <_L b \land ((c,l),1) \in Z\}$ then
  \begin{align*}
    &\widehat{\varphi}(n,Z,b) \leftrightarrow \varphi(n,Y_{<_L b}), \\
    &\widehat{\lnot \varphi}(n,Z,b) \leftrightarrow \lnot \varphi(n,Y_{<_L b})
  \end{align*}
  by induction on the construction of $\varphi$.

  In the following argument, we fix a bijection between $\omega$ and $\omega \cup \{-1,-2\}$.
  Let $\varphi$ be a $\Sigma^0_1 \land \Pi^0_1$ formula in the negation normal form such that for any set $X$ if $X = \varnothing$ then $\{n: \varphi(n,X)\} =A$ and otherwise  $\{n: \varphi(n,X)\} = X'$.
  Define a formula $\psi(n,X)$ by
  \begin{align*}
    \psi(((b,m),k),X) \equiv b \in |L| \land  ((\ast) \lor (\ast\ast) \lor (\ast \ast \ast)).
  \end{align*}
  Here, $(\ast), (\ast\ast)$ and $(\ast \ast \ast)$ are the following formulas.
  \begin{align*}
    (\ast) &\equiv m = -2 \land k= 1 \land \forall c <_L b \forall l \geq 0\, (((c,l),0) \in X \lor ((c,l),1) \in X), \\
    (\ast \ast) &\equiv m=-1 \land k= 1 \land \exists c<_L b \exists l\geq0\, (((c,l),0) \in X \land ((c,l),1) \in X)), \\
    (\ast\ast\ast) &\equiv m\geq 0 \ \land \\
     [&(k = 0 \land \widehat{\lnot \varphi}(m,X-\{((d,m),i) : d \in |L|, m <0, i < 2\},b)) \ \lor \\
      &(k=1 \land \widehat{\varphi}(m,X-\{((d,m),i) : d \in |L|,m < 0,i < 2\},b))].
  \end{align*}
  We note that $\psi$ is a $\Sigma^0_2$ $X$-positive formula.

  Let $Z$ be a fixed point of $\psi$. Then, we have
  \begin{align*}
    ((b,m),k) \in Z \leftrightarrow \psi(((b,m),k),Z).
  \end{align*}
  Define $S_0$ and $S_1$ as follows:
  \begin{align*}
    S_1 &= \{b \in |L| : ((b,-2),1) \not \in Z\}, \\
    S_2 &= \{b \in |L|: ((b,-1),1) \in Z\}.
  \end{align*}

  In the following argument, we say \textit{$Z_{<_L b}$ is a characteristic function} if
  \begin{align*}
    \forall c <_L b \forall l\geq 0 \exists !k <2\,  (((c,l),k) \in Z).
  \end{align*}
  We also define the condition \textit{$Z_{\leq_L b}$ is a characteristic function} by a similar way.
  \setcounter{claimcounter}{1}
  \begin{claim}
    For any $b \in |L|$, we have
    \begin{enumerate}
      \item if $b \in S_1 \cup S_2$, then $Z_{\leq_L b}$ is not a characteristic function,
      \item if $Z_{\leq_L c}$ is a characteristic function for all $c <_L b$, then $Z_{\leq_L b}$ is also a characteristic function,
      \item if $Z_{\leq_L b}$ is not a characteristic function, then $b \in S_1 \cup S_2$.
    \end{enumerate}
  \end{claim}
  \addtocounter{claimcounter}{1}
  \begin{proof of claim}
    (1) Let $b \in S_1 \cup S_2$. If $b \in S_1$, then $((b,-2),1) \not \in Z$.
    Thus $\lnot \psi(((b,-2),1),Z)$.
    By the definition of $\psi$, there are $c <_L b$ and $l \geq 0$ such that $((c,l),0) \not \in Z \land ((c,l),1) \not \in Z$.
    Hence, $Z_{\leq b}$ is not a characteristic function.
    Similarly, assume $b \in S_2$. Then $((b,-1),1) \in Z$ and hence $\psi(((b,-1),1),Z)$.
    By the definition of $\psi$, there are $c <_L b$ and $l \geq 0$ such that $((c,l),0)  \in Z \land ((c,l),1)  \in Z$.
    Thus, $Z_{\leq b}$ is not a characteristic function.

    (2) Assume $\forall c <_L b \,(Z_{\leq_L c} \text{ is a characteristic function})$.
    Let $Y = \{(c,m) : c <_L b \land m \geq 0 \land ((c,m),1) \in Z)\}$.
    Then, for any $n \geq 0$, we have
    \begin{align*}
       \phi(n,Y) &\leftrightarrow \widehat{\varphi}(n,Z-\{((c,m),i): c <_L b, m < 0, i < 2\},b),\\
      \lnot \phi(n,Y) &\leftrightarrow \widehat{\lnot \varphi}(n,Z-\{((c,m),i): c <_L b, m < 0, i < 2\},b).
    \end{align*}
    Thus, exactly one of $\widehat{\varphi}(n,Z-\{((c,m),i): c <_L b, m < 0, i < 2\},b)$ or $\widehat{\lnot \varphi}(n,Z-\{((c,m),i): c <_L b, m < 0, i < 2\},b)$ holds.
    Thus, by the definition of $\psi$, exactly one of $((b,n),0)$ or $((b,n),1)$ is in $Z$ for any $n \geq 0$.
    This yields $Z_{\leq_L b}$ is a characteristic function because $Z_{<_L b}$ is a characteristic function.

    (3) Assume $Z_{\leq_L b}$ is not a characteristic function.
    Then, by (2), there exists $c <_L b$ such that $Z_{<_L c}$ is not a characteristic function.
    Pick such a $c$.
    Now there are two possible cases:
    \begin{enumerate}[label = (\roman*)]
      \item $\exists d <_L c \exists l \geq 0 [((d,l),0) \not \in Z \land ((d,l),1) \not \in Z]$, or
      \item $\exists d <_L c \exists l\geq 0 [((d,l),0) \in  Z \land ((d,l),1) \in Z]$.
    \end{enumerate}
    In the former case, $\psi(((b,-2),1), Z)$ does not hold and hence $b \in S_1$.
    In the latter case, $\psi(((b,-1),1),Z)$ holds and hence $b \in S_2$.
    \qed
  \end{proof of claim}
  Now we have $S_0 \cup S_1$ has no minimal element and is upward closed.
  Moreover, $\{(b,m) : m \geq 0 \land b \in |L| \land b \not \in S_1 \cup S_2 \land ((b,m),1) \in Z\}$ is a jump hierarchy.
  Thus, if $\infty \not \in S_0 \cup S_1$, then $\{(b,m) : m \geq 0 \land b \in |L| \land b \neq \infty \land ((b,m),1) \in Z\}$ is an output of $\ATR_2(\widetilde{L},A)$. If $\infty \in S_0 \cup S_1$, then we can effectively find a descending sequence from $S_0 \cup S_1$.
  Since $S_0$ and $S_1$ are uniformly computable from $Z$, the proof is completed.
\end{proof}

We give a problem which is not reducible to $\ATR_2$ but reducible to $\ATR_2 \times \ATR_2$. This yields that $\ATR_2$ is not parallelizable.
Since $\FP(\Sigma^0_2)$ is parallelizable, we also have that $\ATR_2$ is strictly weaker than $\FP(\Sigma^0_2)$.

\begin{lemma}\label{atr2 cannot reduce}
  (Essentially due to Goh et al. \cite[Lemma 4.9.]{MR4328030})
  Let $\P$ be a problem. Assume $\P$ has a computable input $A$ with the following condition. For any output $B \in \P(A)$, there exists $Z$ such that $B = 0'' \oplus Z$ and $Z$ is not $\Delta^1_1$.
  Then $\P \not \leq_W \ATR_2$.
\end{lemma}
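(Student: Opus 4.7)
The plan is to suppose for contradiction that $\P \leq_W \ATR_2$ via computable functionals $\Phi$ and $\Psi$, and to derive a contradiction by producing some $Y \in \ATR_2(\Phi(A))$ with $\Psi(A, Y)$ hyperarithmetic. Writing such an output as $\Psi(A, Y) = 0'' \oplus Z$ would force $Z \leq_T \Psi(A, Y)$ to be $\Delta^1_1$ as well, contradicting the hypothesis on $\P(A)$. Setting $(L, C) := \Phi(A)$, both $L$ and $C$ are computable because $A$ is computable and $\Phi$ is a computable functional.

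The first case to dispatch is when $L$ is a well-order. Then $L$ is a computable well-order, so its order type is a recursive ordinal below $\omega_1^{\mathrm{CK}}$, and the unique jump hierarchy $H$ along $L$ starting with $C$ is $\Delta^1_1$ by the standard transfinite-induction construction of the hyperarithmetic hierarchy. Taking $Y := H$ yields $\Psi(A, H) \leq_T A \oplus H$, which is $\Delta^1_1$, and we are done.

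The remaining case is when $L$ is ill-founded. If $L$ admits a $\Delta^1_1$ descending sequence $Y$, the same argument applies at once. The delicate situation is when $L$ is a pseudo-well-order---an ill-founded computable linear order with no $\Delta^1_1$ descending sequence, which exists by Harrison's construction. Here I follow Goh, Pauly and Valenti~\cite{MR4328030}: exploiting the uniformity with which $\Psi(A, \cdot)$ must continuously read off $0''$ on its even coordinates from any descending sequence of $L$, and using the effective structure of the computable tree $T$ of finite descending sequences of $L$ together with its Kleene--Brouwer ordering, I construct a descending sequence $Y$ for which $\Psi(A, Y)$ is forced to be hyperarithmetic, giving the desired contradiction.

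The main obstacle is this pseudo-well-order sub-case. Classical basis theorems for $\Pi^0_1$ classes in Cantor space do not apply directly to the $\Pi^0_1$ class of descending sequences in Baire space, so I cannot simply invoke a low-basis-type result to produce a simple $Y$; instead, the uniformity of $\Psi$ on its even coordinates and the effective structure of the tree $T$ must be combined carefully, which is exactly the technical content imported from \cite{MR4328030}.
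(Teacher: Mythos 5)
There is a genuine gap, and it sits exactly where the lemma's content lies. Your setup, the well-order case, and the sub-case where $L$ has a $\Delta^1_1$ descending sequence are all correct and agree with the paper. But the remaining case --- $L$ an ill-founded computable order with no $\Delta^1_1$ descending sequence --- is the whole point of the lemma, and your treatment of it is a deferral to the citation together with a description of a strategy that cannot be executed. You propose to ``construct a descending sequence $Y$ for which $\Psi(A,Y)$ is forced to be hyperarithmetic.'' Under the standing hypotheses no such $Y$ exists: every $\Psi(A,Y)$ lies in $\P(A)$, and every element of $\P(A)$ is of the form $0''\oplus Z$ with $Z$ not $\Delta^1_1$, hence is itself not $\Delta^1_1$. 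So the object you intend to construct is exactly the object the hypotheses rule out, and there is no independent general principle that would supply it (for instance, $\Psi(A,f)=f$ on the Harrison order is a total computable functional on descending sequences whose outputs are never hyperarithmetic). A contradiction of the shape you describe would have to come with an actual construction, and none is given.

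The argument the paper (following Goh et al.) actually runs never produces a single descending sequence at all. Writing $\Theta^f$ for the $0''$-half of $\Psi(A,f)$, one knows $\Theta^f=\chi_{0''}$ for \emph{every} descending sequence $f$ of $L$. For each $s,n$ one looks at the finite descending sequences $F$ of $L$ with code at most $s$ on which $\Theta^F_s(n)$ halts, and selects the one whose last entry is $\leq_L$-greatest; the selection guarantees that this finite string extends to an infinite descending sequence (splice any genuine descending sequence below its last entry onto it), so by use-monotonicity its computed value must equal $0''(n)$ once $s$ is large. This yields a computable $p(s,n)$ with $\lim_s p(s,n)=0''(n)$, hence $0''\leq_T 0'$ by the limit lemma --- a contradiction that needs no case split between pseudo-well-orders and ill-founded orders with $\Delta^1_1$ descending sequences. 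That finite-approximation-plus-limit-lemma step is the technical heart of the proof, and it is the step your proposal leaves out.
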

\begin{proof}
Let $\P$ and $A$ be as above.
Assume $\P \leq_W \ATR_2$ via $\Phi,\Psi$.
Since $A$ is an input for $\P$, $\Phi(A)$ is an input for $\ATR_2$. We note that there exist a computable set $C$ and a computable linear order $L$ such that $\Phi(A) = (L,C)$. We pick such $C$ and $L$.
If $L$ is a well-order, then the unique output $Y \in \ATR_2(L,C)$ is $\Delta^1_1$ and so is $\Psi(A,Y)$. However, $\Psi(A,Y)$ should be an output of $\P(A)$, so this is a contradiction. Hence $L$ is ill-founded.

Now for any descending sequence $f$ of $L$, $\Psi(A,f)$ is an output of $\P(A)$.
Thus we can find an oracle function $\Theta^X(\bullet)$ such that for any descending sequence $f$ of $L$, $\Theta^f(\bullet)$ is the characteristic function of $0''$.

We will define a sequence of finite sequence $\mathcal{F} = \{F_{s,n}\}_{s,n \in \omega} \leq_T L$ such that
  \begin{align*}
      &(\forall n)(\exists s_n)(\forall s > s_n)(\Theta^{F_{s,n}}(n)\mathord{\downarrow}), \\
      &(\forall n)(\exists s_n)(\forall s > s_n)(\exists f_{s,n}:\text{descending sequence of } L)(F_{s,n} \prec f_{s,n}).
  \end{align*}
For each $s$ and $n$, define $D_{s,n}$ by
  \begin{align*}
      D_{s,n} = \{F : F \text{ is a finite descending sequence of $L$} \land |F| \geq 1 \land F \leq s \land \Theta^{F}_s (n)\downarrow\}.
  \end{align*}
  Here, the second inequality $F \leq s$ means that the code of $F$ is smaller than $s$, and $\Theta^{F}_s$ denotes the $s$-step restriction of $\Theta^{F}$.
  If $D_{s,n} = \varnothing$, then define $F_{s,n}$ as the empty sequence $\langle \rangle$. Otherwise, pick $F_{s,n} \in D_{s,n}$ such that
  \begin{align*}
    (\forall F \in D_{s,n})(F(\lh(F)-1) \leq_{L} F_{s,n}(\lh(F)-1)).
  \end{align*}

  We will show that $\{F_{s,n}\}_{s,n}$ satisfies the desired conditions.
  Fix $n \in \omega$. Let $f$ be a descending sequence through $L$.
  Since $\Theta^f$ is a total function, there exists $t \in \omega$ such that $\Theta^{f[t]}_t(n)\mathord{\downarrow}$.
  Thus for any $s \geq \max\{f[t],t\}$, $f[t] \in D_{s,n}$.
  Moreover, since $f[t] \prec f$ and $f(t-1) \leq_{L} F_{s,n}(|F_{s,n}|-1)$,
  the concatenation $F_{s,n} \ast \langle f(t),f(t+1),\ldots \rangle$ is an
  infinite descending sequence of $L$.

  Define an $L$-computable function $p(s,n) = \Theta^{F_{s,n}}_s(n)$. Then, we have $(\forall n)(\lim_s p(s,n) = 0''(n))$.
  On the other hand, since $p \leq_T L \leq_T 0$, we have a contradiction that $0'' \leq_T 0'$.
\end{proof}

\begin{theorem}\label{ATR_2 is not parallelizable}
    $\ATR_2 \not \leq_W \ATR_2 \times \ATR_2$. Therefore, $\ATR_2$ is not parallelizable.
\end{theorem}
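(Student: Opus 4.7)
The strategy is to invoke Lemma \ref{atr2 cannot reduce} on $\P = \ATR_2 \times \ATR_2$; the direction of interest is $\ATR_2 \times \ATR_2 \not\leq_W \ATR_2$ (the converse $\ATR_2 \leq_W \ATR_2 \times \ATR_2$ is trivial by projection), and non-parallelizability follows at once from this together with $\ATR_2 \times \ATR_2 \leq_W \widehat{\ATR_2}$. So what I need is a single computable input to $\ATR_2 \times \ATR_2$ whose every output, coded as a single set via pairing, is of the form $0'' \oplus Z$ with $Z$ not $\Delta^1_1$.

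The input I would take is $A = ((L_1, \varnothing), (L_2, \varnothing))$, where $L_1$ is the usual ordering of $\omega$ (a computable well-order) and $L_2$ is a fixed \emph{Harrison linear order}: a computable ill-founded linear order whose well-founded initial segment has order type $\omega_1^{\mathrm{CK}}$ and which admits no $\Delta^1_1$ descending sequence. The first factor $\ATR_2(L_1, \varnothing)$ has the unique output $\langle \varnothing^{(n)} \rangle_{n < \omega}$, from which $0''$ is computable by projection. Any output of the second factor $\ATR_2(L_2, \varnothing)$ is either a descending sequence in $L_2$---not $\Delta^1_1$ by the defining property of $L_2$---or a jump hierarchy along the entire $L_2$, which is again not $\Delta^1_1$: a hyperarithmetic such hierarchy, together with the computable $L_2$, would let one $\Delta^1_1$-separate the well-founded part of $L_2$ from its ill-founded tail, contradicting the fact that the well-founded part of a Harrison order is $\Pi^1_1$-complete.

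Combining the two coordinates, every output of $(\ATR_2 \times \ATR_2)(A)$ uniformly computes $0''$ (from its first half) and fails to be $\Delta^1_1$ (because of its second half), which is precisely the hypothesis of Lemma \ref{atr2 cannot reduce} up to the standard computable identification of pairs of sets with single sets. Hence $\ATR_2 \times \ATR_2 \not\leq_W \ATR_2$; in particular $\widehat{\ATR_2} \not\leq_W \ATR_2$, so $\ATR_2$ is not parallelizable. The only non-routine step I foresee is the verification that no jump hierarchy along a Harrison order can be $\Delta^1_1$; the descending-sequence alternative is immediate from the definition of Harrison, while the hierarchy alternative requires the Spector-type argument just sketched.
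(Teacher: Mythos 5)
Your proposal is correct and follows essentially the same route as the paper: both apply Lemma \ref{atr2 cannot reduce} to $\ATR_2 \times \ATR_2$ with a computable pair input whose first coordinate forces every output to compute $0''$ and whose second coordinate is a computable pseudo-well-order (your Harrison order) admitting no $\Delta^1_1$ output. The only cosmetic differences are that the paper uses a finite linear order in the first coordinate where you use $\omega$, and that you spell out why the Harrison order has no $\Delta^1_1$ output where the paper simply postulates such a pseudo-well-order.
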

\begin{proof}
  We will show that Lemma \ref{atr2 cannot reduce} can be applied for $\ATR_2 \times \ATR_2$.

  Let $A$ be the linear order with length  = 2 and $L$ be a pseudo well-order. That is, $L$ is a computable linear order but $\ATR_2(L,\varnothing)$ has no $\Delta^1_1$ output.
  Now both of $(A,\varnothing),(L,\varnothing)$ are computable inputs for $\ATR_2$, so
  $X= ((A,\varnothing),(L,\varnothing))$ is a computable input for $\ATR_2 \times \ATR_2$.
  Then any output of $(\ATR_2 \times \ATR_2)(X)$ is of the form $(0'',Z)$ such that $Z \in \ATR_2(L,\varnothing)$,
  so we can apply Lemma \ref{atr2 cannot reduce} to $\ATR_2 \times \ATR_2$.
\end{proof}

\begin{remark}
  In \cite{arXiv1905.06868}, it is shown that K\"{o}nig's duality theorem ($\mathsf{KDT}$) is parallelizable and arithmetically Weihrauch equivalent to $\ATR_2$ (see Proposition 9.4.\,and Corollary 9.28.). Thus, $\ATR_2$ is arithmetically parallelizable while it is not parallelizable.
\end{remark}

\begin{theorem}\label{fp is not reducible to atr2}
    $\FP(\Sigma^0_2)$ is not Weihrauch reducible to $\ATR_2$.
\end{theorem}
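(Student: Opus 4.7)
The plan is to obtain a contradiction by combining the three key facts already established earlier in this section: $\ATR_2 \leq_W \FP(\Sigma^0_2)$ (Theorem \ref{atr2 is reducible to fp}), $\FP(\Sigma^0_2)$ is parallelizable, and $\ATR_2$ is not parallelizable (Theorem \ref{ATR_2 is not parallelizable}).

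Suppose for contradiction that $\FP(\Sigma^0_2) \leq_W \ATR_2$. Together with Theorem \ref{atr2 is reducible to fp}, this would yield $\ATR_2 \equiv_W \FP(\Sigma^0_2)$. I will then invoke the general fact that Weihrauch equivalence preserves parallelizability. Concretely, I need the standard observation that parallelization is monotone with respect to $\leq_W$: if $\P \leq_W \Q$ then $\widehat{\P} \leq_W \widehat{\Q}$ (run the forward reducing functional componentwise on the input sequence, and the backward reducing functional componentwise on the output sequence). Granted this, if $\P \equiv_W \Q$ and $\P$ is parallelizable, then
\begin{equation*}
\widehat{\Q} \leq_W \widehat{\P} \leq_W \P \leq_W \Q,
\end{equation*}
so $\Q$ is parallelizable as well.

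Applying this to $\P = \FP(\Sigma^0_2)$ and $\Q = \ATR_2$, the assumed equivalence would force $\ATR_2$ to be parallelizable, directly contradicting Theorem \ref{ATR_2 is not parallelizable}. Hence $\FP(\Sigma^0_2) \not\leq_W \ATR_2$.

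There is no real obstacle here; the entire work has been done in the preceding results. The only item not spelled out elsewhere is the monotonicity of parallelization under $\leq_W$, which is routine and can either be cited or proved in one line as above. Thus the proof amounts to a short bookkeeping argument assembling the previously proved theorems.
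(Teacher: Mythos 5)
Your argument is correct and is essentially the paper's own proof: both combine $\ATR_2 \leq_W \FP(\Sigma^0_2)$, the parallelizability of $\FP(\Sigma^0_2)$, and Theorem \ref{ATR_2 is not parallelizable} (whose content is $\ATR_2 \times \ATR_2 \not\leq_W \ATR_2$) to reach the same contradiction. The paper just phrases it as $\ATR_2 \times \ATR_2 \leq_W \FP(\Sigma^0_2)$ directly rather than routing through monotonicity of the parallelization operator, but this is the same bookkeeping.
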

\begin{proof}
  Since $\ATR_2 \leq_W \FP(\Sigma^0_2)$ and $\FP(\Sigma^0_2)$ is parallelizable,
  $\ATR_2 \times \ATR_2 \leq_W \FP(\Sigma^0_2)$.
  Thus, $\FP(\Sigma^0_2) \not \leq_W \ATR_2$ because $\ATR_2 \times \ATR_2 \not \leq_W \ATR_2$.
\end{proof}

\section{Omega-model reflection}
In the previous section, we have shown that neither $\FP(\Sigma^0_2)$ nor $\ATR_2 \times \ATR_2$ are Weihrauch reducible to $\ATR_2$ although $\L_2(\FP(\Sigma^0_2)), \L_2(\ATR_2 \times \ATR_2)$ and $\L_2(\ATR_2)$ for canonical representations are equivalent over $\RCAo$.
This means that $\ATR_2$ is too weak to capture the provability over $\ATRo$ even when we consider only arithmetically representable problems.
Then, what is a useful upper bound for arithmetically representable problems provable from $\ATRo$?

In general, it is known that any countable chain of Weihrauch degrees has no nontrivial supremum\cite{higuchi2010degree}.
Hence one cannot expect that there exists a supremum of all problems provable from $\ATRo$.
In this chapter, we introduce an operator the $\omega$-model reflection $\bullet^{\rfn}$ on Weihrauch degrees and show that the $\omega$-model reflection of an arithmetically representable problem $\P$ is still arithmetically representable but strong enough to capture the provability from $\L_2(\P)$.

\begin{definition}[Effective $\omega$-models]
  Let $\langle M_i \rangle_i \subseteq \mathcal{P}(\omega),f,g \in \omega^{\omega}$.
  We say a triple $(\langle M_i \rangle_i,f,g)$ is an effective $\omega$-model of $\ACAo$ if the following conditions hold.
  \begin{align*}
    &M = \langle M_i \rangle_i \text{ is lower closed under Turing reduction} \leq_T, \\
    &M_{f(i)} = (M_i)',\\
    &M_{g(i,j)} = M_i \oplus M_j.
  \end{align*}
\end{definition}
As mentioned in Remark \ref{ACA jump ideal}, an $\omega$-model $S \subseteq \mathcal{P}(\omega)$ satisfies $\ACAo$ if and only if it is lower closed under Turing reductions $\leq_T$, closed under Turing sums $\oplus$ and Turing jumps $\bullet'$.
Thus, an effective $\omega$-model of $\ACAo$ is an $\omega$-model of $\ACAo$
in which one may find jumps and sums effectively.

\begin{definition}[$\omega$-model reflections]
    Let $\P$ be a problem.
    We define the $\omega$-model reflection $\P^{\rfn}$ of $\P$ as follows.
    \begin{itembox}[l]{$\P^{\rfn}$}
    \begin{description}
        \item[\sf Input] Any set $X$.
        \item[\sf Output] A tuple $\mathcal{M} = (\langle M_i \rangle_i,f,g,e)$ such that
        \begin{align*}
            &(\langle M_i \rangle_i,f,g) \text{ is an effective $\omega$-model of $\ACAo$}, \\
            &M_e = X,\\
            &\forall i(M_i \in \dom \P \to \exists j (M_j \in \P(M_i))).
        \end{align*}
    \end{description}
  \end{itembox}
  For a set $Y$, we write $Y \in \M$ to mean $Y = M_i$ for some $i$. We also write
  $\M \models \varphi$ to mean $\langle M_i \rangle_i \models \varphi$ for an $\L_2$ sentence $\varphi$.
\end{definition}

\begin{remark}\label{If P is arithmetical, then the last condition is equiv}
  We note that if $\P$ is represented by $(\theta,\eta)$, then the last condition $\forall i(M_i \in \dom \P \to \exists j (M_j \in \P(M_i)))$ is equivalent to
  $\forall i (\theta(M_i) \to \exists j \eta(M_i,M_j))$.
  If $\theta,\eta$ are arithmetical, then this is also equivalent to $\langle M_i \rangle_i \models \L_2(\theta,\eta)$
  (see also Theorem \ref{output-of-omega-model-ref-is-model}).
\end{remark}

\subsection{Basic properties of $\omega$-model reflection}
In this subsection, we will see some basic properties of the operator  $\bullet^{\rfn}$.
First of all, we show that $\bullet^{\rfn}$ is well-defined for Weihrauch degrees.

\begin{theorem}\label{well-def of reflection}
  Let $\P,\Q$ be problems.
  \begin{enumerate}
    \item If $\P \leq_W^a \Q$, then $\Q^\rfn$ refines $\P^\rfn$.
  That is, $\dom(\Q^\rfn) = \dom(\P^\rfn)$ and for any $X \in \dom(\Q^\rfn)$, $\Q^\rfn(X) \subseteq \P^{\rfn}(X)$.
  In particular, $\P^\rfn \leq_W \Q^\rfn$.
  \item If $\P \equiv_W^a \Q$, then $\P^{\rfn}$ and $\Q^{\rfn}$ are the same function. In particular, $\P^\rfn \equiv_W \Q^\rfn$.
\end{enumerate}
\end{theorem}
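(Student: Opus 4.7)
My plan is to prove part~(1) directly and then derive part~(2) in one line. For (1), both $\P^\rfn$ and $\Q^\rfn$ take every set as input, so the domain equality $\dom(\Q^\rfn)=\dom(\P^\rfn)$ is automatic, and once the refinement $\Q^\rfn(X) \subseteq \P^\rfn(X)$ is established, the Weihrauch reduction $\P^\rfn \leq_W \Q^\rfn$ is witnessed simply by the pair of identity functionals. Hence the real content is: if $\Phi,\Psi$ are arithmetical functionals witnessing $\P \leq_W^a \Q$ and $\mathcal{M}=(\langle M_i\rangle_i,f,g,e)$ is any output of $\Q^\rfn(X)$, then $\mathcal{M}$ is also an output of $\P^\rfn(X)$.

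To verify this, I would fix such an $\mathcal{M}$. The conditions that $(\langle M_i\rangle_i,f,g)$ be an effective $\omega$-model of $\ACAo$ and that $M_e=X$ are shared verbatim by the two definitions, so only the reflection clause $\forall i\,(M_i \in \dom\P \to \exists j\, M_j \in \P(M_i))$ needs checking. Given $i$ with $M_i \in \dom\P$, I chase $\Phi$ and $\Psi$ through the listing: because $\Phi$ is arithmetical, $\Phi(M_i)$ is arithmetic in $M_i$, and by Remark~\ref{ACA jump ideal} the model is a Turing ideal closed under Turing jumps and thus arithmetically closed, so $\Phi(M_i)=M_k$ for some $k$. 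Then $M_k = \Phi(M_i) \in \dom\Q$, so the $\Q^\rfn$-reflection clause for $\mathcal{M}$ supplies some $j$ with $M_j \in \Q(M_k)$. Since $\Psi(M_i,M_j)$ is again arithmetic in $M_i \oplus M_j$, a parameter from the listing, it equals some $M_l$, and the definition of $\leq_W^a$ gives $M_l \in \P(M_i)$, as required.

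Part~(2) is then a formal consequence: $\P \equiv_W^a \Q$ supplies arithmetical reductions in both directions, so by part~(1) the multi-valued functions $\P^\rfn$ and $\Q^\rfn$ refine each other, hence agree, and $\P^\rfn \equiv_W \Q^\rfn$ follows once more via the identity functionals.

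The only subtle point worth flagging is the step ``arithmetic in parameters from the listing $\Rightarrow$ belongs to the listing''. This is doing all the work in the argument and relies on the full strength of being an \emph{effective} $\omega$-model of $\ACAo$: closure under $\leq_T$ provided by the listing together with iterated Turing jumps supplied by $f$ (and sums by $g$). I would either record this as a short preliminary remark or expand it inline, but in either case no calculation is needed beyond the observation already packaged in Remark~\ref{ACA jump ideal}.
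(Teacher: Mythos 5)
Your proposal is correct and follows essentially the same route as the paper's proof: reduce to checking the reflection clause, push $\Phi(M_i)$ and $\Psi(M_i,M_j)$ back into the listing using the arithmetical closure of an effective $\omega$-model of $\ACAo$, and derive (2) from mutual refinement. The only difference is that you make explicit the step ``arithmetical in a member of the listing implies a member of the listing,'' which the paper leaves implicit.
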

\begin{proof}
  (1.) Let $\P$ and $\Q$ be problems such that $\P \leq_W^a \Q$ via arithmetical operators $\Phi,\Psi$.
  We note that $\dom(\Q^\rfn) = \dom(\P^\rfn) = \mathcal{P}(\omega)$.
  Thus, what we should show is that for any set $X$, $\Q^\rfn(X) \subseteq \P^\rfn(X)$.
  Let $\M =(\langle M_i \rangle_i,f,g,e)\in \Q^{\rfn}(X)$
  and we will show that $\forall i (M_i \in \dom \P \to \exists j (M_j \in \P(M_i)))$.

  Let $i$ be such that $M_i \in \dom \P$. Then, $\Phi(M_i) \in \dom \Q$.
  Since $\Phi$ is arithmetical, $\Phi(M_i) = M_k$ for some $k$.
  Pick such a $k$. Then, there exists $l$ such that $M_l \in \Q(M_k)$ because $\langle M_n \rangle_n$ is an output of $\Q^\rfn$. For such an $l$, $\Psi(M_i,M_l) \in \P(M_i)$ and $\Psi(M_i,M_l) \in \langle M_n \rangle_n$.

  (2.) This is immediate from (1.)
\end{proof}

The name $\omega$-model reflection comes from a principle in second-order arithmetic having the same name.
We will see some properties of this principle in the next section.
Intuitively, the second-order arithmetic version of the $\omega$-model reflection says that \textit{any set is contained by an $\omega$-model of a given problem}.
Thus, one may expect that our $\omega$-model reflection outputs an $\omega$-model of a given problem.
This is actually true for $(\Sigma^1_1,\Pi^1_1)$-representations.

\begin{theorem}\label{output-of-omega-model-ref-is-model}
    Let $(\theta,\eta)$ be a $(\Sigma^1_1,\Pi^1_1)$-representation for a problem $\P$.
    Then, for any input $A$ for $\P^{\rfn}$ and any output $\M \in \P^{\rfn}(A)$, $\M \models \L_2(\theta,\eta) + \ACAo$.
\end{theorem}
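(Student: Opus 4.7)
The plan is to separate the two conjuncts and invoke absoluteness (Lemma~\ref{absoluteness for arithmetical sentences}) to transfer membership statements between $\M$ and the intended model. Concretely, I would first dispose of $\M \models \ACAo$ on general grounds, then treat $\L_2(\theta,\eta)$ using that $\theta$ is $\Sigma^1_1$ (so it goes \emph{up} from $\M$ to the intended model) and $\eta$ is $\Pi^1_1$ (so it comes \emph{down} from the intended model to $\M$).

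\textbf{Step 1.} For $\M \models \ACAo$, I would appeal to Remark~\ref{ACA jump ideal}, which says that $\omega$-models of $\ACAo$ are exactly the Turing ideals closed under Turing jump. By the very definition of an effective $\omega$-model of $\ACAo$, the second-order part $\langle M_i \rangle_i$ is lower closed under $\leq_T$, and the functions $f$ and $g$ witness closure under Turing jump and Turing join respectively. So this step is essentially bookkeeping.

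\textbf{Step 2.} For $\M \models \L_2(\theta,\eta)$, fix $X \in \M$, write $X = M_i$, and suppose $\M \models \theta(X)$. Since $\theta$ is $\Sigma^1_1$, Lemma~\ref{absoluteness for arithmetical sentences} (upward $\Sigma^1_1$ absoluteness) gives that $\theta(M_i)$ holds in the intended model, i.e.\ $M_i \in \dom\P$. By the defining condition of $\P^{\rfn}$ there exists $j$ with $M_j \in \P(M_i)$, meaning $\eta(M_i,M_j)$ holds in the intended model. Since $\eta$ is $\Pi^1_1$, Lemma~\ref{absoluteness for arithmetical sentences} (downward $\Pi^1_1$ absoluteness) gives $\M \models \eta(M_i,M_j)$, which is the desired witness for $\exists Y\,\eta(X,Y)$ inside $\M$.

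There is no real obstacle: the argument is a direct application of the two directions of arithmetical/$\Sigma^1_1$/$\Pi^1_1$ absoluteness, and it only works because the complexity of $(\theta,\eta)$ is aligned with the directions of transfer. If the hypothesis $\theta \in \Sigma^1_1$ were relaxed to $\Pi^1_1$ (or $\eta \in \Pi^1_1$ to $\Sigma^1_1$), the corresponding absoluteness step would fail, which explains why the theorem is formulated for $(\Sigma^1_1,\Pi^1_1)$-representations specifically.
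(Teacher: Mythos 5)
Your proposal is correct and follows essentially the same route as the paper's proof: both establish $\M \models \ACAo$ from the definition of an effective $\omega$-model, then use upward $\Sigma^1_1$ absoluteness for $\theta$ to pass into the intended model, invoke the defining condition of $\P^{\rfn}$ to obtain a witness $M_j$, and bring $\eta(M_i,M_j)$ back down into $\M$ by $\Pi^1_1$ absoluteness. Your closing remark about why the complexity hypotheses are exactly what make the two transfer steps work matches the paper's subsequent remark on the optimality of the theorem.
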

\begin{proof}
  Let $A$ be a set and $\M = (\langle M_i \rangle_i,f,g,e)$ be an output of $\P^{\rfn}(A)$. Then, $\M \models \ACAo$.
  Since $\L_2(\theta,\eta)$ is $\forall X(\theta(X) \to \exists Y \eta(X,Y))$,
  it is enough to show that $\M \models \forall X(\theta(X) \to \exists Y \eta(X,Y))$.

  Let $X \in \M$ such that $\M \models \theta(X)$. Since $\theta(X)$ is $\Sigma^1_1$, $\theta(X)$ is true. Thus, by the definition of $\P^{\rfn}$, there exists $j$ such that $\eta(X,M_j)$ holds. Since $\eta$ is $\Pi^1_1$, we have $\M \models \eta(X,M_j)$.
\end{proof}
\begin{remark}
  We note that the previous lemma is optimal.
  In fact, we can show the following.
  \begin{enumerate}
    \item There exists a $(\Pi^1_1,\Sigma^1_0)$-representable problem $\P = \Pb(\theta,\eta)$ with the following property. There exists a set $A$ and an output $M \in \P^{\rfn}(A)$ such that $M \not \models \L_2(\theta,\eta)$.
    \item There exists a $(\Sigma^1_0,\Sigma^1_1)$-representable problem $\P = \Pb(\theta,\eta)$ with the following property. There exists a set $A$ and an output $M \in \P^{\rfn}(A)$ such that $M \not \models \L_2(\theta,\eta)$.
  \end{enumerate}
  For (1.), consider $\ATR$. Then there exists an output $\mathcal{M} \in \ATR^{\rfn}(\varnothing)$ such that $\langle M_i \rangle_i = \HYP$ but $\HYP$ is not a model of $\ATRo$, where $\HYP$ is the set of all hyperarithmetical sets.
  For (2.), consider a representation $(\theta,\eta)$ such that
  \begin{align*}
    &\theta(X) \equiv X \text{ is a linear order}, \\
    &\eta(X,Y) \equiv Y =Y \land \exists Z(\Hier(X,\varnothing,Z) \lor Z \text{ is a descending sequence of $X$}).
  \end{align*}
  In this case, any countable $\omega$-model of $\ACAo$ can be an output of $\P^{\rfn}(\varnothing)$. However, $\L_2(\theta,\eta)$ is equivalent to $\ATRo$ over $\RCAo$. Thus, the minimum $\omega$-model ARITH of $\ACAo$ does not satisfy $\L_2(\theta,\eta)$.
\end{remark}

Our main purpose of introducing $\omega$-model reflections is relating provabilities in second-order arithmetic and Weihrauch reducibilities.
Thus, we next see the relationship between $\omega$-model reflections and provabilities.

\begin{lemma}\label{P-ref and omega-model reducibility}
  Let $(\theta,\eta)$ be a $(\Pi^1_1,\Sigma^1_1)$-representation and $(\theta',\eta')$ be a $(\Sigma^1_1,\Pi^1_1)$-representation such that $(\theta,\eta) \leq_{\omega}^a (\theta',\eta')$.
  Put $\P = \Pb(\theta,\eta)$ and $\Q = \Pb(\theta',\eta')$.
  Then $\Q^{\rfn}$ refines $\P^{\rfn}$ and hence $\P^{\rfn} \leq_W \Q^{\rfn}$ holds.
\end{lemma}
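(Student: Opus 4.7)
The plan is to show directly that every output of $\Q^{\rfn}$ on input $A$ is also an output of $\P^{\rfn}$ on $A$; the Weihrauch reduction $\P^{\rfn}\leq_W \Q^{\rfn}$ then follows by taking the identity as both forward and backward functionals, since both problems share domain $\mathcal{P}(\omega)$.

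Fix an input $A$ and a tuple $\M=(\langle M_i\rangle_i,f,g,e)\in\Q^{\rfn}(A)$. First I invoke Theorem \ref{output-of-omega-model-ref-is-model} for the $(\Sigma^1_1,\Pi^1_1)$-representation $(\theta',\eta')$: this already gives $\M\models\ACAo+\L_2(\theta',\eta')$. The hypothesis $(\theta,\eta)\leq_{\omega}^a(\theta',\eta')$ then yields $\M\models\L_2(\theta,\eta)$, i.e.
\begin{equation*}
\M\models\forall X(\theta(X)\to\exists Y\,\eta(X,Y)).
\end{equation*}
It remains to verify the reflection clause $\forall i(M_i\in\dom\P\to\exists j\,M_j\in\P(M_i))$, which by Remark \ref{If P is arithmetical, then the last condition is equiv} (adapted to the present complexity) amounts to checking $\forall i(\theta(M_i)\to\exists j\,\eta(M_i,M_j))$ in the intended model.

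So suppose $M_i\in\dom\P$, meaning $\theta(M_i)$ holds in the intended model. Because $\theta$ is $\Pi^1_1$, Lemma \ref{absoluteness for arithmetical sentences} transports this downward, giving $\M\models\theta(M_i)$. By the displayed $\L_2(\theta,\eta)$ satisfied in $\M$, there is $j$ with $\M\models\eta(M_i,M_j)$; and since $\eta$ is $\Sigma^1_1$, Lemma \ref{absoluteness for arithmetical sentences} transports this upward, giving $\eta(M_i,M_j)$ in the intended model, i.e.\ $M_j\in\P(M_i)$. Thus $\M\in\P^{\rfn}(A)$, which is exactly the refinement statement.

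There is no real obstacle here; the one subtle point to get right is the matching of complexity classes with the direction of absoluteness (downward preservation for $\Pi^1_1$ formulas like $\theta$, upward for $\Sigma^1_1$ formulas like $\eta$), which is precisely why the hypothesis fixes $(\theta,\eta)$ as a $(\Pi^1_1,\Sigma^1_1)$-representation and $(\theta',\eta')$ as a $(\Sigma^1_1,\Pi^1_1)$-representation. Once these polarities are aligned, everything reduces to applying the previous theorem and the absoluteness lemma.
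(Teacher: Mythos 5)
Your proof is correct and follows essentially the same route as the paper's: apply Theorem \ref{output-of-omega-model-ref-is-model} to get $\M \models \ACAo + \L_2(\theta',\eta')$, use the $\omega$-model reducibility hypothesis to get $\M \models \L_2(\theta,\eta)$, and then transfer $\theta$ downward and $\eta$ upward via the $\Pi^1_1$/$\Sigma^1_1$ absoluteness polarities. The only cosmetic difference is that you cite Remark \ref{If P is arithmetical, then the last condition is equiv} for the identification of the reflection clause with $\forall i(\theta(M_i)\to\exists j\,\eta(M_i,M_j))$, which is really just the definition of $\Pb(\theta,\eta)$.
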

\begin{proof}
  Let $(\theta,\eta),(\theta',\eta'),\P$ and $\Q$ be as above.

  Let $A$ be a set and $\M \in \Q^{\rfn}(A)$. We will show that $\forall X \in \M (\theta(X) \to \exists Y \in \M \eta(X,Y))$ holds.
  Let $X \in \M$ such that $\theta(X)$ holds.
  Then, $\M \models \theta(X)$ because $\theta$ is $\Pi^1_1$.
  Now we have $\M \models \L_2(\theta',\eta') + \ACAo$ by Theorem \ref{output-of-omega-model-ref-is-model}.
  Thus $\M \models \L_2(\theta,\eta)$ since $\P \leq_{\omega}^a \Q$.
  Thus there exists $Y \in \M$ such that $\M \models \eta(X,Y)$.
  Since $\eta$ is $\Sigma^1_1$, $\eta(X,Y)$ holds for such a $Y$.
\end{proof}

\begin{theorem}\label{P-ref and provability}
  Let $\P = \Pb(\theta,\eta)$ and $\Q = \Pb(\theta',\eta')$ be the same as in Lemma \ref{P-ref and omega-model reducibility}.
  If $\L_2(\theta,\eta)$ is provable from a theory $T \subseteq \{\sigma : \text{any $\omega$-model of $\L_2(\theta',\eta') + \ACAo$ satisfies $\sigma$} \}$, then $\P^{\rfn} \leq_W \Q^{\rfn}$.
  In particular, if $\L_2(\theta',\eta')$ proves $\L_2(\theta,\eta)$, then $\P^{\rfn} \leq_W \Q^{\rfn}$.
\end{theorem}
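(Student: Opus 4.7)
The plan is to reduce this directly to Lemma \ref{P-ref and omega-model reducibility}. The hypothesis is essentially a repackaging of the arithmetical $\omega$-model reduction $(\theta,\eta) \leq_\omega^a (\theta',\eta')$: we need only observe that if every sentence of $T$ holds in every $\omega$-model of $\L_2(\theta',\eta') + \ACAo$, and $T \vdash \L_2(\theta,\eta)$, then every such $\omega$-model also satisfies $\L_2(\theta,\eta)$.

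Concretely, I would proceed as follows. Fix an arbitrary $\omega$-model $M$ with $M \models \L_2(\theta',\eta') + \ACAo$. By the hypothesis $T \subseteq \{\sigma : \text{any $\omega$-model of $\L_2(\theta',\eta') + \ACAo$ satisfies } \sigma\}$, every $\sigma \in T$ is satisfied by $M$; hence $M \models T$. Since $T$ proves $\L_2(\theta,\eta)$ in first-order logic, soundness gives $M \models \L_2(\theta,\eta)$. This exactly says $(\theta,\eta) \leq_\omega^a (\theta',\eta')$ in the sense introduced just before Remark \ref{four reductions}. Hence Lemma \ref{P-ref and omega-model reducibility} applies and yields $\P^{\rfn} \leq_W \Q^{\rfn}$.

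For the ``in particular'' clause, I would take $T = \{\L_2(\theta',\eta')\}$. Trivially every $\omega$-model of $\L_2(\theta',\eta') + \ACAo$ satisfies this single axiom, so $T$ meets the theory-inclusion hypothesis, and by assumption $T$ proves $\L_2(\theta,\eta)$; hence the main statement applies.

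There is no real obstacle here: the only subtlety is that one must be careful to note that $T$ is a (first-order) $\L_2$-theory and that the proof $T \vdash \L_2(\theta,\eta)$ is a classical first-order derivation, so soundness with respect to Henkin semantics for $\omega$-models applies. Everything else is a direct invocation of the preceding lemma.
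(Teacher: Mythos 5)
Your proposal is correct and follows essentially the same route as the paper: the paper's proof also observes that any $\omega$-model of $\L_2(\theta',\eta') + \ACAo$ satisfies $T$ and hence $\L_2(\theta,\eta)$, establishing $(\theta,\eta) \leq_{\omega}^{a} (\theta',\eta')$, and then invokes Lemma \ref{P-ref and omega-model reducibility}. Your explicit treatment of the ``in particular'' clause via $T = \{\L_2(\theta',\eta')\}$ and the remark on soundness are fine additions but do not change the argument.
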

\begin{proof}
  Let $T \subseteq \{\sigma : \text{any $\omega$-model of $\L_2(\theta',\eta') + \ACAo$ satisfies $\sigma$} \}$ be such that
  $T \vdash \L_2(\theta,\eta)$.

  By Lemma \ref{P-ref and omega-model reducibility}, it is enough to show that $(\theta,\eta) \leq_{\omega}^{a} (\theta',\eta')$. Let $\M$ be an $\omega$-model of $\L_2(\theta',\eta') + \ACAo$.
  Then $\M \models T$. Hence $\M \models \L_2(\theta,\eta)$. This completes the proof.
\end{proof}

\begin{lemma}\label{P-ref-strengthen}
    If $\P$ is a $(\Sigma^1_\infty,\Sigma^1_0)$-representable problem, then $\P \leq_W \P^{\rfn}$.
\end{lemma}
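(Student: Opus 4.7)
The plan is to take the identity map as the forward reduction $\Phi$ and to extract a solution to $\P(X)$ from an output of $\P^{\rfn}(X)$ by running a computable search that decides the arithmetical matrix $\eta$ using the jump function inside the effective $\omega$-model.

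Write $\P = \Pb(\theta,\eta)$ with $\eta$ arithmetical. I set $\Phi(X) = X$, which is always a legal input to $\P^{\rfn}$ since $\dom(\P^{\rfn}) = \mathcal{P}(\omega)$. Given any output $\M = (\langle M_i\rangle_i, f, g, e) \in \P^{\rfn}(X)$ we have $M_e = X$, and since $X \in \dom \P$ by hypothesis, the defining clause of $\P^{\rfn}$ applied at $i = e$ yields some $j$ with $M_j \in \P(X)$, i.e.\ with $\eta(X, M_j)$ true. So at least one witness is already enumerated inside $\M$; what remains is to locate one of them uniformly.

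The backward reduction $\Psi(X,\M)$ then performs such a search. Fix $n$ so that $\eta$ is $\Sigma^0_n$. For each candidate $j$, iterate $f$ exactly $n$ times on the sum index $g(e,j)$ to obtain an index $k(j)$ with
\[
M_{k(j)} \;=\; (M_e \oplus M_j)^{(n)}.
\]
The truth value of $\eta(M_e, M_j)$ is then encoded in a single fixed bit of $M_{k(j)}$, computable uniformly in $j$ and $\M$. By Lemma \ref{absoluteness for arithmetical sentences}, arithmetical truth is absolute between the effective $\omega$-model and the intended model, so this procedure genuinely decides whether $\eta(X, M_j)$ holds. I search for the first such $j$ and set $\Psi(X, \M) = M_j$; termination is guaranteed by the existence argument of the previous paragraph, and correctness by absoluteness.

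The only delicate point, and it is a very mild one, is the book-keeping that ties the number of iterations of $f$ to the syntactic complexity of $\eta$ and makes the bit-extraction uniform. With that in hand, both $\Phi$ and $\Psi$ are plainly computable, yielding $\P \leq_W \P^{\rfn}$.
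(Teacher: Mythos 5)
Your proposal is correct and is essentially the paper's own argument: feed $X$ itself to $\P^{\rfn}$, then use the jump function $f$ and sum function $g$ of the effective $\omega$-model to decide the arithmetical formula $\eta(M_e,M_j)$ uniformly (the paper iterates the jump $k+1$ times for $\Sigma^0_k$, you use $n$ times for $\Sigma^0_n$; both suffice) and search for the first witnessing index $j$. The appeal to Lemma \ref{absoluteness for arithmetical sentences} is harmless but not needed, since the jumps in an effective $\omega$-model are genuine Turing jumps, so the extracted bit already reflects truth in the intended model.
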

\begin{proof}
    Let $(\theta,\eta)$ be a $(\Sigma^1_\infty,\Sigma^0_k)$-representation for $\P$.
    To show $\P \leq_W \P^{\rfn}$, take an input $X$ for $\P$.
    Then $X$ satisfies $\theta(X)$. We will find a set $Y$ satisfying $\eta(X,Y)$ by using $\P^{\rfn}$.

    Let $\M = (\langle M_i \rangle_i,f,g,e)$ be an output of $\P^{\rfn}(X)$.
    Since $\theta(M_e)$ holds, there exists $j$ such that $\eta(M_e,M_j)$ holds.
    Moreover, for each $i$ and $j$, the question \textit{does $\eta(M_i,M_j)$ hold?} is decidable from $M_{f^{k+1}g(i,j)}$ uniformly.
    Therefore, we can effectively find an index $j$ such that $\eta(M_e,M_j)$ holds.
\end{proof}

\begin{theorem}
  Let $(\theta,\eta)$ be a $(\Pi^1_1,\Sigma^1_0)$-representation for $\P$ and $(\theta',\eta')$ be a $(\Sigma^1_1,\Pi^1_1)$-representation for $\Q$.
  If $(\theta,\eta) \leq_{\omega}^a (\theta',\eta')$, then $\P \leq_W \Q^{\rfn}$.
  In particular, if a theory $T \subseteq \{\sigma : \text{any $\omega$-model of $\L_2(\theta',\eta') + \ACAo$ satisfies $\sigma$} \}$ proves $\L_2(\theta,\eta)$,
   then $\P \leq_W \Q^{\rfn}$.
\end{theorem}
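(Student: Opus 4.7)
The plan is to chain together the two preceding lemmas: use Lemma \ref{P-ref-strengthen} to move from $\P$ up to $\P^{\rfn}$, and then use Lemma \ref{P-ref and omega-model reducibility} to reduce $\P^{\rfn}$ to $\Q^{\rfn}$. Since the hypothesis $(\theta,\eta) \in (\Pi^1_1,\Sigma^1_0)$ is stronger than what each lemma needs individually, the two hypotheses combine cleanly.

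First, I would observe that the $(\Pi^1_1,\Sigma^1_0)$-representation $(\theta,\eta)$ of $\P$ is in particular a $(\Sigma^1_\infty,\Sigma^1_0)$-representation, so Lemma \ref{P-ref-strengthen} yields $\P \leq_W \P^{\rfn}$. Next, since $\Sigma^1_0 \subseteq \Sigma^1_1$, the same pair $(\theta,\eta)$ is a $(\Pi^1_1,\Sigma^1_1)$-representation of $\P$, while $(\theta',\eta')$ is by assumption a $(\Sigma^1_1,\Pi^1_1)$-representation of $\Q$. Together with the hypothesis $(\theta,\eta) \leq_\omega^a (\theta',\eta')$, Lemma \ref{P-ref and omega-model reducibility} applies and gives $\P^{\rfn} \leq_W \Q^{\rfn}$. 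Composing these two reductions yields $\P \leq_W \Q^{\rfn}$, as desired.

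For the ``in particular'' clause, I would verify that the provability hypothesis implies $(\theta,\eta) \leq_\omega^a (\theta',\eta')$, reducing to the first part. Let $\M$ be any $\omega$-model of $\ACAo + \L_2(\theta',\eta')$. By the assumption on $T$, every sentence in $T$ holds in $\M$, so $\M \models T$; since $T \vdash \L_2(\theta,\eta)$ we get $\M \models \L_2(\theta,\eta)$. This is precisely the statement $(\theta,\eta) \leq_\omega^a (\theta',\eta')$, and the previous paragraph then gives $\P \leq_W \Q^{\rfn}$.

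I do not expect a genuine obstacle here, since both pieces of work have already been isolated in the preceding lemmas; the only point requiring care is to verify that the complexity bounds of the representations match the hypotheses of Lemmas \ref{P-ref-strengthen} and \ref{P-ref and omega-model reducibility} after the obvious inclusion $\Sigma^1_0 \subseteq \Sigma^1_1$. Aside from this bookkeeping, the argument is a one-line composition $\P \leq_W \P^{\rfn} \leq_W \Q^{\rfn}$.
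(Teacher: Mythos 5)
Your proof is correct and follows exactly the paper's argument: the paper also deduces the theorem as an immediate consequence of Lemma \ref{P-ref-strengthen} (giving $\P \leq_W \P^{\rfn}$) and Lemma \ref{P-ref and omega-model reducibility} (giving $\P^{\rfn} \leq_W \Q^{\rfn}$). Your explicit bookkeeping of the representation complexities and the derivation of $(\theta,\eta)\leq_\omega^a(\theta',\eta')$ from the provability hypothesis is just a spelled-out version of what the paper leaves implicit.
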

\begin{proof}
  Immediate from Lemma \ref{P-ref and omega-model reducibility} and \ref{P-ref-strengthen}.
\end{proof}

The previous theorem suggests that the $\omega$-model reflection of an arithmetically representable problem
is strong enough to capture provability from the problem.

Next, we see the separation of arithmetically representable problems and their reflections.

\begin{lemma}[$\omega$-model incompleteness, {\cite[Theorem VIII.5.6]{MR2517689}}]\label{omega-model incompleteness}

  Let $\sigma$ be an $\L_2$ sentence which is true in the intended model.
  Put $\mathbf{T} = \ACAo + \sigma$. Then, there is a countable $\omega$-model of
  \begin{align*}
    \mathbf{T} + \lnot \exists \text{countable coded $\omega$-model of $\mathbf{T}$}.
  \end{align*}
\end{lemma}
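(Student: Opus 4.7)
The plan is to adapt the classical $\omega$-model incompleteness argument: pick a countable $\omega$-model $M$ of $\mathbf{T}$ minimizing a suitable ordinal invariant, and derive a contradiction from the assumption that $M$ contains a code for a countable $\omega$-model of $\mathbf{T}$.

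First I would check that countable $\omega$-models of $\mathbf{T}$ exist at all. Because $\sigma$ holds in the intended model, $(\omega,\mathcal{P}(\omega)) \models \mathbf{T}$, and a downward L\"owenheim--Skolem argument in Henkin semantics for second-order arithmetic yields a countable $S_0 \subseteq \mathcal{P}(\omega)$ with $(\omega,S_0) \models \mathbf{T}$; the preservation of $\sigma$ is justified by Lemma \ref{absoluteness for arithmetical sentences} when $\sigma$ is arithmetic, and by higher-level absoluteness otherwise. For each countable $\omega$-model $N$ of $\mathbf{T}$, set $\alpha(N) = \sup\{|R| : R \in N \text{ and } R \text{ is a genuine well-order}\}$, which is a countable ordinal. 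By well-foundedness of the countable ordinals, there is a countable $\omega$-model $M$ of $\mathbf{T}$ with $\alpha(M)$ minimal; fix such an $M$.

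Now suppose for contradiction that some $X \in M$ codes a countable $\omega$-model $N$ of $\mathbf{T}$. Absoluteness again ensures $N$ is genuinely a countable $\omega$-model of $\mathbf{T}$. All sets of $N$ are arithmetic in $X$, hence belong to $M$, since $M \models \ACAo$ is closed under Turing reductions (Remark \ref{ACA jump ideal}). Using $X$ and arithmetic comprehension inside $M$, one forms the concatenation (followed, say, by a singleton top element) of all genuine well-orders enumerated by $X$; the result is a genuine well-order in $M$ whose order type strictly exceeds $\alpha(N)$. Therefore $\alpha(N) < \alpha(M)$, contradicting the minimality of $\alpha(M)$. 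Hence no such $X$ exists, so $M$ witnesses the conclusion of the lemma.

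The main technical obstacle is the strict inequality $\alpha(N) < \alpha(M)$: one must certify that the concatenation built inside $M$ is genuinely well-founded (so it contributes to $\alpha(M)$) and that it dominates every well-order of $N$ in order type. Both points reduce to $\Pi^1_1$-absoluteness of well-foundedness for parameters that lie in a common $\omega$-model---immediate from Lemma \ref{absoluteness for arithmetical sentences} for arithmetic $\sigma$, and from Shoenfield-type absoluteness in the non-arithmetic case. A secondary point is that the selection of a minimal $M$ appeals only to the well-ordering of the countable ordinals and so needs no additional set-theoretic hypothesis; the full detailed argument is the one carried out in \cite[Theorem VIII.5.6]{MR2517689}.
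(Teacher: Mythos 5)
The paper does not actually prove this lemma --- it is quoted from Simpson \cite[Theorem VIII.5.6]{MR2517689}, where the argument is a G\"odel-style self-reference/diagonalization in $\omega$-logic (using that satisfaction in a countable coded $\omega$-model is arithmetic in the code, one builds a sentence $\delta$ asserting its own failure in some coded $\omega$-model of $\mathbf{T}$ and derives a contradiction from the assumption that every countable $\omega$-model of $\mathbf{T}$ contains a coded one). Your proposal instead tries a minimization argument on the ordinal invariant $\alpha(N)=\sup\{|R| : R\in N,\ R \text{ a genuine well-order}\}$, and this route has a genuine gap at its central step, the strict inequality $\alpha(N)<\alpha(M)$.

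Two things go wrong there. First, the set you want to concatenate --- ``all genuine well-orders enumerated by $X$'' --- is not available inside $M$ by arithmetic comprehension: deciding which columns of $X$ are well-founded is a $\Pi^1_1(X)$-complete problem, so $M\models\ACAo$ gives you no such set, and concatenating \emph{all} columns instead destroys well-foundedness. Second, and more fundamentally, the inequality is false in general, so no repair of the construction can save the argument. Since every set of $N$ is recursive in the code $X\in M$ and $M$ is a Turing ideal, every well-order of $N$ already lies in $M$, giving only $\alpha(N)\le\alpha(M)$; with your minimal choice of $M$ this yields $\alpha(N)=\alpha(M)$ and no contradiction. Concretely, take $\mathbf{T}=\ACAo$, let $N$ be the sets arithmetic in some $Z$ (coded by some $Y\equiv_T Z^{(\omega)}$) and $M$ the sets arithmetic in $Y$: then $N$ is coded in $M$, yet $\alpha(N)=\omega_1^{Z}=\omega_1^{Z^{(\omega)}}=\alpha(M)$, because the $\omega$-jump does not increase the least non-hyperarithmetic ordinal. (The analogous sup-of-order-types argument is the one used for $\beta$-models, where well-foundedness \emph{is} absolute into the submodel; for mere $\omega$-models it does not transfer.) The preliminary part of your proposal --- existence of a countable $\omega$-model of $\mathbf{T}$ --- is fine, though the clean justification is a downward L\"owenheim--Skolem elementary substructure of $(\omega,\mathcal{P}(\omega))$ as a two-sorted first-order structure, which preserves arbitrary $\L_2$ sentences; ``Shoenfield-type absoluteness'' is not the relevant tool for a general $\sigma$.
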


\begin{theorem}\label{Thm of omega-model incomp in Weih}
  If $\P$ has an arithmetical representation, then $\P^{\rfn} \not \leq^a_{W} \P$.
\end{theorem}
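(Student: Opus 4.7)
The plan is to derive a contradiction from the $\omega$-model incompleteness lemma (Lemma \ref{omega-model incompleteness}) applied to $\sigma := \L_2(\theta, \eta)$, where $(\theta, \eta)$ is a fixed arithmetical representation of $\P$. Suppose toward a contradiction that $\P^{\rfn} \leq_W^a \P$ via arithmetical functionals $\Phi, \Psi$. By Lemma \ref{omega-model incompleteness}, since $\sigma$ is true in the intended model, there exists a countable $\omega$-model $\N$ satisfying $\ACAo + \sigma + \lnot \exists \text{countable coded $\omega$-model of $\ACAo + \sigma$}$.

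Now pick any $X \in \N$ (say $X = \varnothing$) and view $X$ as an input for $\P^{\rfn}$. Because $\Phi$ is arithmetical and $\N \models \ACAo$, the set $\Phi(X)$ lies in $\N$. By Lemma \ref{absoluteness for arithmetical sentences}, $\theta(\Phi(X))$ holds if and only if $\N \models \theta(\Phi(X))$, so $\Phi(X)$ is a member of $\dom(\P)$ as seen by $\N$. Using $\N \models \sigma$, pick $Y \in \N$ with $\N \models \eta(\Phi(X), Y)$; arithmetical absoluteness yields $Y \in \P(\Phi(X))$ in truth. Then $\Psi(X, Y) \in \N$ by arithmetical closure, and it has the form $(\langle M_i \rangle_i, f, g, e)$ coding an output of $\P^{\rfn}(X)$. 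Since the arithmetical representation $(\theta, \eta)$ is in particular a $(\Sigma^1_1, \Pi^1_1)$-representation, Theorem \ref{output-of-omega-model-ref-is-model} tells us that $\langle M_i \rangle_i$ really is an $\omega$-model of $\ACAo + \sigma$.

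It remains to observe that $\N$ itself recognizes $\langle M_i \rangle_i$ as a countable coded $\omega$-model of $\ACAo + \sigma$: the closure of $\langle M_i \rangle_i$ under Turing reducibility and Turing jumps is witnessed by the arithmetical data $f, g \in \N$, and by Remark \ref{ACA jump ideal} this is exactly what is needed for being an $\omega$-model of $\ACAo$; the satisfaction of the arithmetical sentence $\sigma$ reduces to $\forall i (\theta(M_i) \to \exists j\, \eta(M_i, M_j))$, which is arithmetical in $\langle M_i \rangle_i$ and hence absolute between $\N$ and the intended model. Therefore $\N$ sees a countable coded $\omega$-model of $\ACAo + \sigma$, contradicting the choice of $\N$. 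The main delicate point will be this last step, i.e.\ verifying that the notion ``countable coded $\omega$-model of $\ACAo + \sigma$'' is sufficiently absolute between $\N$ and the real world when $\sigma$ is arithmetical; it is precisely here that the arithmeticity hypothesis on $\P$ is essential.
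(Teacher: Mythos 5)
Your proposal is correct and follows essentially the same route as the paper's proof: apply the $\omega$-model incompleteness lemma to $\ACAo + \L_2(\theta,\eta)$, push the input through the arithmetical reduction inside the resulting model, and observe that the output of $\P^{\rfn}$ would be a countable coded $\omega$-model of that theory recognized by the model itself. The paper merely packages the contradiction through an explicit arithmetical representation $(\theta',\eta')$ of $\P^{\rfn}$ (showing the model both must and cannot satisfy $\L_2(\theta',\eta')$), and your closing remark about absoluteness of ``countable coded $\omega$-model of $\ACAo+\sigma$'' is exactly the point the paper handles via finite axiomatizability and relativization of $\mathbf{T}$.
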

\begin{proof}
  Let $(\theta,\eta)$ be an arithmetical representation for $\P$. Then, $\P^{\rfn}$ has the following arithmetical representation $(\theta',\eta')$: $\theta'(X) \equiv X = X$ and
  \begin{align*}
    \eta'(X,(M,f,g,e)) \equiv\,
    &M_e = X,\\
    &\forall i \forall Y \leq_T M_i \exists j (Y = M_j), \\
    &f \text{ is a function from $\N$ to $\N$}, \\
    &\forall i (M_{f(i)} = (M_i)'),\\
    &g \text{ is a function from $\N^2$ to $\N$},\\
    &\forall i,j (M_{g(i,j)} = M_i \oplus M_j),\\
    &\forall i(\theta(M_i) \to \exists j \eta(M_j)).
  \end{align*}
  Let $\mathbf{T} = \ACAo + \L_2 (\theta,\eta)$. Since $\L_2(\theta,\eta)$ is a true sentence, we can find a countable $\omega$-model $M = \langle M_i \rangle_i$ such that
  \begin{align*}
    \langle M_i \rangle_i \models \mathbf{T} + \lnot \exists \text{countable coded $\omega$-model of $\mathbf{T}$}.
  \end{align*}

  First, we show that $M \not \models \L_2(\theta',\eta')$.
  Assume $M \models \L_2(\theta',\eta')$.
  Then, for any $X \in M$, there exist $A = \langle A_i \rangle_i, f,g \in M$ and $e \in \omega$ such that $M \models \eta'(X,(A,f,g,e))$. Thus,
  \begin{align*}
    M \models & A_e = X \ \land \\
    &\forall i \forall Y \leq_T A_i \exists j (Y = A_j) \ \land \\
    & \forall i (A_{f(i)} = A_i')  \ \land  \\
    &\forall i,j (A_{g(i,j)} = A_i \oplus A_j) \ \land \\
    & \forall i (\theta(A_i) \to \exists j \eta(A_i,A_j)).
  \end{align*}
  Pick a tuple $(A,f,g,e)$ satisfying $\eta'(\varnothing,(A,f,g,e))$. We claim that $M \models [A \models \mathbf{T}]$.
  Since $\mathbf{T}$ is finitely axiomatizable and $M$ is a model of $\ACAo$, it is enough to show that $M$ satisfies the relativization $\mathbf{T}^A$ of $\mathbf{T}$ to $A$.
  This is immediate from the definition of $A$.
  However, $M \not \models  \exists N [N \models \mathbf{T}]$ from the definition, so this is a contradiction.

  We next show that  $\P^{\rfn} \leq_W^a \P$ implies $M \models \L_2(\theta',\eta')$.
  Since $M \not \models \L_2(\theta',\eta')$, this yields that $\P^{\rfn} \not \leq_W^a \P$.
  Assume $\P^{\rfn} \leq_W^a \P$ via arithmetical operators $\Phi$ and $\Psi$.
  To see $M \models \L_2(\theta',\eta')$, pick an arbitrary $X \in M$.
  Then, we have
  \begin{align*}
    \theta(\Phi(X)) \land
    \forall Y \in \P(X) (\Psi(X,Y) \in \P^{\rfn}(X)).
  \end{align*}
  Since $\Phi$ is arithmetical, $\Phi(X) \in M$. Moreover, $M \models \theta(\Phi(X))$ holds because $\theta$ is arithmetical.
  Thus $M \models \exists Y \eta(X,Y)$ because $M \models \L_2(\theta,\eta)$.
  Take such a $Y$, then $Y \in \P(X)$ because $\eta$ is also arithmetical.
  Now we have $\Psi(X,Y) \in \P^{\rfn}(X) \cap M$ but this implies $M \models \eta'(X,\Psi(X,Y))$.
  Therefore, $M$ satisfies $\forall X \exists Z \eta'(X,Z)$.
\end{proof}

\begin{corollary}\label{rfn is enough strong}
  If $\P$ has an arithmetical representation, then $\P \leq_W \P^{\rfn}$ but $\P^{\rfn} \not \leq_W^a \P$.
\end{corollary}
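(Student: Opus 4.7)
The corollary simply bundles two results that have already been established in the section, so the plan is to point at each of them and verify the hypothesis of Lemma \ref{P-ref-strengthen} is met.

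For the positive direction $\P \leq_W \P^{\rfn}$, I would observe that any arithmetical representation $(\theta,\eta)$ of $\P$ is in particular a $(\Sigma^1_\infty,\Sigma^1_0)$-representation (both $\theta$ and $\eta$ are arithmetical, hence $\eta \in \Sigma^1_0$ as required). Therefore Lemma \ref{P-ref-strengthen} applies and gives the reduction $\P \leq_W \P^{\rfn}$ immediately, with the standard forward functional passing $X$ to $\P^{\rfn}$ and the backward functional using an oracle jump (arithmetic in the output) to locate an index $j$ of a solution inside the effective $\omega$-model.

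For the negative direction $\P^{\rfn} \not\leq_W^a \P$, I would just cite Theorem \ref{Thm of omega-model incomp in Weih}, whose statement is exactly the desired nonreduction under the same hypothesis that $\P$ has an arithmetical representation.

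Putting both together yields the corollary; there is no further step to carry out and no real obstacle, since the work was done in the lemma and theorem above. The only minor point worth making explicit in the proof is that the arithmeticity hypothesis on the representation of $\P$ simultaneously fulfills the $(\Sigma^1_\infty,\Sigma^1_0)$ hypothesis of Lemma \ref{P-ref-strengthen} and the arithmetical hypothesis of Theorem \ref{Thm of omega-model incomp in Weih}, so a single assumption suffices.
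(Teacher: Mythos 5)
Your proposal is correct and follows essentially the same route as the paper: the positive direction is exactly Lemma \ref{P-ref-strengthen} (an arithmetical representation being a special case of a $(\Sigma^1_\infty,\Sigma^1_0)$-representation), and the negative direction is a direct citation of Theorem \ref{Thm of omega-model incomp in Weih}. The paper's own proof is the same one-line combination of these two results, so nothing further is needed.
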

\begin{proof}
  It is immediate from the above theorem and Remark \ref{four reductions}.
\end{proof}

Next, we see the relationship between $\omega$-model reflections and closed choice on Baire space $\CNN$.

\begin{definition}
  We define the problem $\Sigma^1_1\mathchar`-\mathsf{C}_{2^\omega}$ as follows:
  \begin{itembox}[l]{$\Sigma^1_1\mathchar`-\mathsf{C}_{2^\omega}$}
    \begin{description}
      \item[\sf Input] A $\Sigma^1_1$ formula $\varphi(X,Y)$ and a set $A$ such that $\exists X \varphi(X,A)$ holds.
      \item[\sf Output] A set $B$ such that $\varphi(B,A)$ holds.
    \end{description}
  \end{itembox}
\end{definition}
\begin{lemma}
  $\CNN$ is Weihrauch equivalent to $\Sigma^1_1\mathchar`-\mathsf{C}_{2^\omega}$.
\end{lemma}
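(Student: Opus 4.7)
The plan is to prove both reductions using the standard tree representation of $\Sigma^1_1$ sets.

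For the easy direction $\CNN \leq_W \Sigma^1_1\mathchar`-\mathsf{C}_{2^\omega}$, given an ill-founded tree $T \subseteq \omega^{<\omega}$ I identify each $f \in \omega^\omega$ with its graph $\{(n, f(n)) : n \in \omega\} \subseteq \omega$ and let $\varphi(X, Y)$ be the arithmetical (in fact $\Pi^0_1$) formula asserting that $X$ codes some $f \in \omega^\omega$ satisfying $f[n] \in Y$ for every $n$. With $A$ coding $T$, the ill-foundedness of $T$ gives $\exists X\, \varphi(X, A)$, and any $B$ returned by $\Sigma^1_1\mathchar`-\mathsf{C}_{2^\omega}$ on this input decodes computably to a path of $T$.

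For the converse $\Sigma^1_1\mathchar`-\mathsf{C}_{2^\omega} \leq_W \CNN$, I would invoke the standard Kleene normal form: uniformly in the Gödel number $\GN{\varphi}$ one effectively produces a decidable predicate $R_\varphi$ on finite binary strings and finite sequences such that $\varphi(X, Y)$ is equivalent to
\begin{equation*}
\exists f \in \omega^\omega \ \forall n\ R_\varphi\bigl(X \restriction n,\, f[n],\, Y \restriction n\bigr),
\end{equation*}
where $X \restriction n$ denotes the length-$n$ initial segment of the characteristic function of $X$. On input $(\GN{\varphi}, A)$ I then form the $A$-computable tree $T^* \subseteq (2 \times \omega)^{<\omega}$ whose nodes are pairs $(\sigma, \tau)$ of equal length satisfying $R_\varphi(\sigma \restriction k, \tau \restriction k, A \restriction k)$ for every $k \leq |\sigma|$; transferred via the canonical computable bijection $2 \times \omega \cong \omega$, this becomes a tree on $\omega$. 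Its paths are precisely the pairs $(X, f)$ with $f$ witnessing $\varphi(X, A)$, so $T^*$ is ill-founded by hypothesis. Feeding $T^*$ into $\CNN$ and reading off the first coordinate of the resulting path yields the desired $B$.

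The only real obstacle is verifying uniformity of the normal form in the second direction. This is handled by fixing once and for all a universal $\Sigma^1_1$ formula and carrying out the standard reductions---contracting the arithmetic matrix to $\Pi^0_1$ form via Skolemization of existential number quantifiers, then absorbing the Skolem functions into an outer existential set quantifier---effectively in $\GN{\varphi}$. The remaining work is routine bookkeeping between subsets of $\omega$, elements of $2^\omega$, and elements of $\omega^\omega$ via the canonical bijections noted in Remark \ref{Rmk code of seq}.
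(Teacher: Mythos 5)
Your argument is correct and is the standard proof via the tree representation of $\Sigma^1_1$ sets: the paper itself does not prove this lemma but defers to Theorem IV.13 of d'Auriac's thesis, which proceeds along essentially the same lines (normal form plus projection of a path through the associated tree). One cosmetic slip: in the first direction the clause ``$X$ codes some $f\in\omega^\omega$'' involves totality and is therefore $\Pi^0_2$ rather than $\Pi^0_1$, but this is harmless since all that is needed is that $\varphi$ be $\Sigma^1_1$.
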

\begin{proof}
  See Theorem IV.13 of \cite{PaulElliot}.
\end{proof}

\begin{theorem}
  The following hold.
  \begin{enumerate}
    \item If $\P$ is $(\Sigma^1_\infty,\Sigma^1_0)$-representable, then $\P \leq_W \CNN$.
    \item If $\P$ is $(\Pi^1_1,\Sigma^1_0)$-representable, then $\P <_W \CNN$.
  \end{enumerate}
\end{theorem}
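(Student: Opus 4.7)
The plan is to handle the two parts in turn, relying on the equivalence $\CNN \equiv_W \Sigma^1_1\mathchar`-\mathsf{C}_{2^\omega}$ just established and on the fact, recalled in the introduction from d'Auriac \cite{PaulElliot}, that $\CNN$ is not Weihrauch reducible to any arithmetically representable problem.

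For part (1), let $(\theta,\eta)$ be a $(\Sigma^1_\infty,\Sigma^1_0)$-representation of $\P$. Given an input $X$ for $\P$, the formula $\eta(X,Y)$ is arithmetical and hence in particular $\Sigma^1_1$ in $Y$ with parameter $X$, while $\exists Y\,\eta(X,Y)$ holds because $X\in\dom\P$. The reduction is therefore to hand (a fixed G\"odel number of) $\eta$ together with the parameter $X$ to $\Sigma^1_1\mathchar`-\mathsf{C}_{2^\omega}$; any output $B$ satisfies $\eta(X,B)$ and so $B\in\P(X)$. This yields $\P\leq_W\Sigma^1_1\mathchar`-\mathsf{C}_{2^\omega}\equiv_W\CNN$.

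For part (2), $\P\leq_W\CNN$ is immediate from (1), so the task is to rule out $\CNN\leq_W\P$. The strategy is to bound $\P$ from above by an arithmetically representable problem $\Q$ and then invoke the nonreducibility fact above. To build $\Q$, write $\theta(X)\equiv\forall Z\,\rho(X,Z)$ for an arithmetical $\rho$, and declare $\Q$ to have arbitrary $X$ as input with outputs the pairs $(i,Y)$ satisfying $(i=0\land\eta(X,Y))\lor(i=1\land\neg\rho(X,Y))$. Both disjuncts are arithmetical, so $\Q$ has an arithmetical representation, and $\Q$ is total: for every $X$ either $\theta(X)$ holds (yielding a witness of the first kind via the well-definedness of $\P$) or $\neg\theta(X)$ holds (yielding a $Z$ with $\neg\rho(X,Z)$, a witness of the second kind). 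The reduction $\P\leq_W\Q$ uses the identity on inputs: for $X\in\dom\P$ the case $i=1$ would contradict $\theta(X)$, so $i=0$ and the returned $Y$ already lies in $\P(X)$. Combined with $\CNN\not\leq_W\Q$, this gives $\CNN\not\leq_W\P$, completing (2).

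The main obstacle I anticipate is only careful bookkeeping in the appeal to the nonreducibility fact: one needs $\CNN\not\leq_W\Q$ in its full generality for an arbitrary arithmetically representable $\Q$, rather than for specific canonical examples such as $\ATR_2$. The introduction endorses this reading (``$\CNN$ is stronger than any problem of this form''), and I would pin down the precise formulation in \cite{PaulElliot} when writing the argument up in full.
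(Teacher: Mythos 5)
Part (1) of your argument is correct and is exactly the paper's proof: feed $\eta(X,\cdot)$ with parameter $X$ to $\Sigma^1_1\mathchar`-\mathsf{C}_{2^\omega}\equiv_W\CNN$, using that $\{Y:\eta(X,Y)\}\neq\varnothing$ for $X\in\dom\P$.

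Part (2) has a genuine gap. Everything hinges on the claim that $\CNN\not\leq_W\Q$ for an arbitrary arithmetically representable $\Q$, and you neither prove it nor cite a source that contains it. The sentence from the introduction that you lean on (``$\CNN$ is stronger than any problem of this form'') is a forward reference to the very theorem you are proving; the citation to d'Auriac in that passage is only for the \emph{positive} direction, namely that $\CNN$ is Weihrauch reducible to the pseudo-hierarchy problem. The paper's own proof of (2) consists of citing Proposition 8.4 of \cite{arXiv1905.06868} (Kihara--Marcone--Pauly), which covers $(\Pi^1_1,\Sigma^1_0)$-representable problems directly; so your reduction to the arithmetical case, while correct (the totalized problem $\Q$ with outputs $(0,Y)$ witnessing $\eta(X,Y)$ or $(1,Z)$ witnessing $\neg\rho(X,Z)$ is indeed arithmetically representable and refines $\P$ on $\dom\P$), does not buy anything: it replaces the statement to be proved by an instance of itself.

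To close the gap one must actually argue the non-reducibility. One way: suppose $\CNN\leq_W\P$ via $\Phi,\Psi$ with $\P=\Pb(\theta,\eta)$, $\theta\in\Pi^1_1$, $\eta$ arithmetical. For an index $e$ of a computable tree $T_e$, consider the predicate
\begin{align*}
  \Phi(T_e)\mathord{\downarrow}\ \land\ \theta(\Phi(T_e))\ \land\ \forall Y\bigl(\eta(\Phi(T_e),Y)\to\Psi(T_e,Y)\mathord{\downarrow}\in[T_e]\bigr),
\end{align*}
which is $\Pi^1_1$ in $e$. If $T_e$ is ill-founded it holds by the definition of a Weihrauch reduction; conversely, if it holds then $\Phi(T_e)\in\dom\P$, so some $Y$ satisfies $\eta(\Phi(T_e),Y)$ and hence $[T_e]\neq\varnothing$. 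Thus the $\Sigma^1_1$-complete set of indices of ill-founded computable trees would be $\Pi^1_1$, a contradiction. Some such argument (or an explicit appeal to the correct external result) is what your write-up is missing.
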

\begin{proof}
  We show (1.). For (2.), see also \cite[Proposition 8.4]{arXiv1905.06868}.

  Let $(\theta,\eta)$ be a $(\Sigma^1_\infty,\Sigma^1_0)$-representation for $\P$. Let $X$ be an input for $\P$.
  Then, as we mentioned in Remark \ref{remark of input/output of problems}, $\{Y: \eta(X,Y)\} \neq \varnothing$.
  Thus, by $\Sigma^1_1\mathchar`-\mathsf{C}_{2^\omega}$, we can find a $Y$ such that $\eta(X,Y)$ holds.
\end{proof}

\begin{corollary}\label{reflection and CNN}
  If $\P$ has an arithmetical representation, then $\P^{\rfn} <_W \CNN$.
\end{corollary}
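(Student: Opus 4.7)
The plan is to apply part (2) of the preceding theorem directly to $\P^{\rfn}$. So the task reduces to showing that $\P^{\rfn}$ is $(\Pi^1_1,\Sigma^1_0)$-representable whenever $\P$ is arithmetically representable; the strict inequality $\P^{\rfn} <_W \CNN$ then falls out of part (2) for free.

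First I would revisit the representation $(\theta',\eta')$ for $\P^{\rfn}$ that was written out in the proof of Theorem \ref{Thm of omega-model incomp in Weih}. The precondition $\theta'(X) \equiv X = X$ is trivially arithmetical, and the conjuncts of $\eta'$ expressing $M_e = X$, that $f,g$ are functions coding jumps and joins, and the reflection clause $\forall i(\theta(M_i) \to \exists j\,\eta(M_i,M_j))$ are all arithmetical when $\theta,\eta$ are. The one conjunct that is not manifestly arithmetical is the closure of $\langle M_i \rangle_i$ under Turing reduction, $\forall i\,\forall Y(Y \leq_T M_i \to \exists j\,Y = M_j)$, which carries a second-order universal quantifier over $Y$. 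Dealing with this clause is the one step that needs any care.

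To remove the second-order quantifier, I would rewrite Turing-closure in terms of Turing functional indices, as $\forall i\,\forall e\bigl(\Phi_e^{M_i} \text{ is total} \to \exists j\,\forall n\,\Phi_e^{M_i}(n) = M_j(n)\bigr)$. Totality of $\Phi_e^{M_i}$ is $\Pi^0_2$ in $M_i$, and the consequent is arithmetical in $M_i$ and $M_j$, so this reformulation is arithmetical; and for sequences of sets it is equivalent to the original clause, since every $Y \leq_T M_i$ is of the form $\Phi_e^{M_i}$ for some total $\Phi_e$. Substituting this rewritten clause into $\eta'$ produces an arithmetical representation for $\P^{\rfn}$, which is in particular $(\Pi^1_1,\Sigma^1_0)$ since arithmetical formulas are simultaneously $\Pi^1_1$ and $\Sigma^1_0$. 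Applying part (2) of the preceding theorem then yields $\P^{\rfn} <_W \CNN$, as required.
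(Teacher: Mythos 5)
Your proposal is correct and follows essentially the same route as the paper: the paper's proof is the one-line observation that an arithmetical representation of $\P$ yields an arithmetical representation of $\P^{\rfn}$ (so that part (2) of the preceding theorem applies), and you have simply spelled out the detail of why the Turing-closure clause can be made arithmetical via functional indices. This is exactly the reading the paper intends for the quantifier $\forall Y \leq_T M_i$ in the representation written out in Theorem \ref{Thm of omega-model incomp in Weih}.
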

\begin{proof}
  If $\P$ has an arithmetical representation, then so does $\P^{\rfn}$.
\end{proof}

\subsection{$\omega$-model reflection of $\ATR,\ATR_2$}
In this subsection, we consider $\ATR$ and $\ATR_2$ by using $\omega$-model reflections.

\begin{lemma} [Folklore]
  Let $\mathrm{HYP}$ be the set of all hyperarithmetical sets. Then, there is no arithmetical representation ($\theta,\eta$) such that
  $\mathrm{HYP}$ is the smallest $\omega$-model of $\L_2(\theta,\eta)$.
\end{lemma}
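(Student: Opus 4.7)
The plan is to argue by contradiction using a reflection argument based on the admissibility of $\omega_1^{\mathrm{CK}}$. Suppose $(\theta,\eta)$ is an arithmetical representation such that $\HYP$ is the smallest $\omega$-model of $\L_2(\theta,\eta)$. Then in particular $\HYP \models \L_2(\theta,\eta)$; by absoluteness of arithmetical formulas (Lemma~\ref{absoluteness for arithmetical sentences}), this means that for every hyperarithmetical $X$ with $\theta(X)$ there is a hyperarithmetical $Y$ with $\eta(X,Y)$.

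I would then stratify $\HYP$ via the constructible hierarchy, using the representation $\HYP = \bigcup_{\alpha < \omega_1^{\mathrm{CK}}} \HYP_\alpha$ with $\HYP_\alpha := L_\alpha \cap \mathcal{P}(\omega)$. For admissible $\alpha < \omega_1^{\mathrm{CK}}$ the set $\HYP_\alpha$ is a countable $\omega$-model of $\ACAo$. Since $\theta$ and $\eta$ are arithmetical, the function
\[
g(\alpha) := \min\bigl\{\beta < \omega_1^{\mathrm{CK}} : (\forall X \in \HYP_\alpha)\,(\theta(X) \to (\exists Y \in \HYP_\beta)\,\eta(X,Y))\bigr\}
\]
is well-defined (by the preceding paragraph) and $\Sigma_1$-definable over the admissible structure $L_{\omega_1^{\mathrm{CK}}}$, because satisfaction of an arithmetical formula in a countable $\omega$-model is $\Delta_1$ there. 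Admissibility ($\Sigma_1$-replacement together with $\Sigma_1$-reflection) then implies that the closure points of $g$ are cofinal below $\omega_1^{\mathrm{CK}}$, so I can pick an admissible closure point $\alpha^* < \omega_1^{\mathrm{CK}}$.

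For such $\alpha^*$ we have $\HYP_{\alpha^*} \models \L_2(\theta,\eta)$ while $\HYP_{\alpha^*} \subsetneq \HYP$, contradicting the supposed minimality of $\HYP$. The main technical obstacle will be rigorously establishing the $\Sigma_1$-definability of $g$ and the cofinality of its closure points inside $L_{\omega_1^{\mathrm{CK}}}$; both rest essentially on the arithmeticity of $\theta,\eta$, without which the satisfaction of $\L_2(\theta,\eta)$ in countable $\omega$-models would not be expressible as a sufficiently bounded-complexity condition over the admissible set $L_{\omega_1^{\mathrm{CK}}}$. This is precisely why the conclusion fails for representations whose hypothesis involves genuine $\Pi^1_1$ content (such as well-foundedness).
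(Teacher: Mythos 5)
Your argument takes a genuinely different route from the paper's. The paper simply invokes the known fact that $\HYP$ is a model of $\Pi^1_2$-reflection (it cites Theorems VIII.4.11 and VIII.5.12 of Simpson's book): since $\L_2(\theta,\eta)$ is a $\Pi^1_2$ sentence true in $\HYP$, there is a coded $\omega$-model $\langle M_i\rangle_i \in \HYP$ with $\HYP \models [\langle M_i\rangle_i \models \L_2(\theta,\eta)]$; arithmeticity of $\theta,\eta$ upgrades this to genuine satisfaction, and the coded model is a proper subset of $\HYP$. Your Skolem-function/closure-point construction over $L_{\omega_1^{\mathrm{CK}}}$ is in effect a self-contained proof of exactly the instance of $\Pi^1_2$-reflection that the paper outsources to the reference, at the cost of developing the $\Sigma_1$-definability and $\Sigma_1$-replacement machinery for admissible sets. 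Both routes work, and both use arithmeticity of $\theta,\eta$ in the same essential way (absoluteness between the submodel and the intended model).

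There is, however, one step that is literally impossible as written: there are no admissible ordinals strictly between $\omega$ and $\omega_1^{\mathrm{CK}}$, since $\omega_1^{\mathrm{CK}}$ is by definition the least admissible ordinal above $\omega$. So you cannot ``pick an admissible closure point $\alpha^* < \omega_1^{\mathrm{CK}}$'', and your earlier remark that $\HYP_\alpha$ is an $\omega$-model of $\ACAo$ for admissible $\alpha < \omega_1^{\mathrm{CK}}$ is vacuous. Fortunately, admissibility of $\alpha^*$ is never used: a limit closure point of $g$ suffices, and the standard iteration $\gamma_{n+1} = \sup(g[\gamma_n] \cup \{\gamma_n+\omega\})$, with $\alpha^* = \sup_n \gamma_n$ (each $\gamma_n < \omega_1^{\mathrm{CK}}$ by $\Sigma_1$-replacement, and $\alpha^* < \omega_1^{\mathrm{CK}}$ since the sequence is $\Sigma_1$-recursive), produces one. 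Such an $\alpha^*$ already gives $\HYP_{\alpha^*} \models \L_2(\theta,\eta)$ by absoluteness of $\theta$ and $\eta$, and closure under $\beta \mapsto \beta+\omega$ gives $\HYP_{\alpha^*} \models \ACAo$ if that is wanted. Together with the standard fact that new reals appear cofinally in $L_{\omega_1^{\mathrm{CK}}}$ (so that $\HYP_{\alpha^*} \subsetneq \HYP$), this repairs the argument completely.
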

\begin{proof}
  Let $(\theta,\eta)$ be an arithmetical representation such that $\mathrm{HYP} \models \L_2(\theta,\eta)$. We show that there is an $\omega$-model of $\L_2(\theta,\eta)$ smaller than $\mathrm{HYP}$.

  We note that $\L_2(\theta,\eta)$ is a $\Pi^1_2$ sentence and $\mathrm{HYP}$ is a model of $\Pi^1_2$-reflection (see Theorem VIII.4.11 and VIII.5.12 of \cite{MR2517689}. See also Definition \ref{def of reflection in SoA} in this paper).
  Thus, there exists $\langle M_i \rangle_i \in \HYP$ such that $\HYP \models [\langle M_i \rangle_i \models \L_2(\theta,\eta)]$.
  Since $\theta$ and $\eta$ are arithmetical, $\langle M_i \rangle_i$ is an $\omega$-model of $\L_2(\theta,\eta)$.
  Moreover, $\langle M_i \rangle_i$ is a proper subset of $\HYP$. This completes the proof.
\end{proof}

\begin{theorem}
  There is no arithmetically representable problem $\P$ such that $\P \equiv_W^a \ATR$.
\end{theorem}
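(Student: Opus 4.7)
The plan is to derive a contradiction with the previous lemma by showing that any arithmetically representable $\P$ Weihrauch-equivalent to $\ATR$ would force $\HYP$ to be the smallest $\omega$-model of $\ACAo + \L_2(\theta,\eta)$. So suppose toward contradiction that $\P = \Pb(\theta,\eta)$ is arithmetical and $\P \equiv_W^a \ATR$, witnessed by arithmetical reductions $\Phi,\Psi$ for $\P \leq_W^a \ATR$ and $\Phi',\Psi'$ for $\ATR \leq_W^a \P$.

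First I would verify $\HYP \models \ACAo + \L_2(\theta,\eta)$. Closure of $\HYP$ under Turing reducibility, joins and jumps gives $\HYP \models \ACAo$ by Remark \ref{ACA jump ideal}. For $X \in \HYP$ with $\HYP \models \theta(X)$, absoluteness of arithmetical formulas (Lemma \ref{absoluteness for arithmetical sentences}) gives that $\theta(X)$ truly holds, so $X \in \dom\P$. Then $\Phi(X)$ is an $\ATR$-input lying in $\HYP$, its unique jump hierarchy $H$ is hyperarithmetic in $\Phi(X)$ and hence in $\HYP$, and $\Psi(X,H) \in \P(X) \cap \HYP$ witnesses $\HYP \models \exists Y\,\eta(X,Y)$ via a second application of absoluteness.

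Next I would show that every $\omega$-model $M$ of $\ACAo + \L_2(\theta,\eta)$ contains $\HYP$. Every $B \in \HYP$ is Turing reducible to the jump hierarchy along some \emph{computable} well-order $W$ starting with $\varnothing$; since $W$ is computable, $W \in M$. Applying $\ATR \leq_W^a \P$, the set $X' := \Phi'(W,\varnothing)$ belongs to $M$ (closure under arithmetical functionals in a model of $\ACAo$) and genuinely satisfies $\theta(X')$ because $(W,\varnothing)$ is a real $\ATR$-input, so $M \models \theta(X')$ by absoluteness. Then $M \models \L_2(\theta,\eta)$ yields $Y \in M$ with $\eta(X',Y)$ (truly, again by absoluteness), and $\Psi'(W,\varnothing,Y)$ equals the jump hierarchy along $W$ and lies in $M$, so $B \in M$.

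Combining the two steps, $\HYP$ is the smallest $\omega$-model of $\ACAo + \L_2(\theta,\eta)$, contradicting the previous lemma: its proof via $\Pi^1_2$-reflection produces a countable coded $\omega$-model of $\ACAo + \L_2(\theta,\eta)$ inside $\HYP$, which is a proper subset of $\HYP$. The main subtlety is running the reduction $\ATR \leq_W^a \P$ inside the abstract model $M$; this relies on the arithmetical complexity of $\Phi',\Psi'$ together with $M \models \ACAo$, and on feeding in genuinely well-founded $W$ so that $\theta(X')$ is absolutely true rather than merely true in $M$, which is essential since $M$ may well contain pseudo well-orders.
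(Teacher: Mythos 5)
Your proof is correct, and it reaches the same contradiction as the paper: the folklore lemma that $\HYP$ cannot be the smallest $\omega$-model of $\L_2(\theta,\eta)$ for an arithmetical representation. The route differs in packaging. The paper first invokes the well-definedness of $\bullet^{\rfn}$ on arithmetical Weihrauch degrees to conclude that $\P^{\rfn}$ and $\ATR^{\rfn}$ are literally the same function, and then merely \emph{asserts} two facts about $\ATR^{\rfn}(\varnothing)$ — that $\HYP$ occurs as an output and that every output contains $\HYP$ — before applying the folklore lemma. Your two steps are in effect proofs of (the transfers to $\P$ of) exactly those two asserted facts, obtained by unwinding the reductions $\Phi,\Psi,\Phi',\Psi'$ directly: boundedness ($\omega_1^{CK}(A)=\omega_1^{CK}$ for hyperarithmetic $A$, so hierarchies along well-orders in $\HYP$ stay in $\HYP$) for closure of $\HYP$, and the fact that every hyperarithmetic set is recursive in the jump hierarchy along some computable well-order for minimality. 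What the paper's route buys is brevity by reusing the reflection machinery; what yours buys is self-containedness and an explicit justification of the closure facts the paper leaves implicit. One small point you handle correctly but should keep visible: the folklore lemma as stated concerns smallest $\omega$-models of $\L_2(\theta,\eta)$ alone, whereas you produce the smallest model of $\ACAo+\L_2(\theta,\eta)$; this is harmless because the extra axiom of $\ACAo$ is itself $\Pi^1_2$, so the $\Pi^1_2$-reflection argument inside $\HYP$ applies to the conjunction and still yields a proper submodel.
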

\begin{proof}
  Assume $\P$ is an arithmetically representable problem such that $\P \equiv_W^a \ATR$.
  Then $\P^\rfn$ and $\ATR^\rfn$ are the same function by Theorem \ref{well-def of reflection}.
  We note that
  \begin{enumerate}
    \item There is an output $(\langle M_i \rangle_i,f,g,e) \in \ATR^\rfn(\varnothing)$ such that $\langle M_i \rangle_i = \HYP$, and
    \item for any $(\langle M_i \rangle_i,f,g,e) \in \ATR^\rfn(\varnothing)$, $\HYP \subseteq \langle M_i \rangle_i$.
  \end{enumerate}
  Since $\P^\rfn$ and $\ATR^\rfn$ are the same, these conditions also hold for $\P^\rfn$.
  Thus, we can pick a tuple $(\langle M_i \rangle_i,f,g,e) \in \P^\rfn(\varnothing)$ such that $\langle M_i \rangle_i = \HYP$.
  Then, by the previous theorem, we can find a tuple $(\langle N_i \rangle_i, \widetilde{f},\widetilde{g},\widetilde{e}) \in \P^\rfn(\varnothing)$ such that $\langle N_i \rangle_i \subsetneq \HYP$. However, this contradicts the second condition.
\end{proof}
This theorem also shows that there is no theory of hyperarithmetical analysis defined by a $\Pi^1_2$ formula.

We note that $\ATR_2$ is arithmetically representable. Thus, we can apply Theorem \ref{P-ref and provability} and Corollary \ref{reflection and CNN} to $\ATR_2$.
\begin{theorem}\label{P leq Prfn leq ATR_2rfn}
  The following holds.
  \begin{enumerate}
    \item Let $(\theta,\eta)$ be a $(\Pi^1_1,\Sigma^1_0)$-representation.
    If there is a theory $T$ such that
    \begin{align*}
      T \subseteq \{\sigma : \text{any $\omega$-model of $\ATRo$ satisfies $\sigma$} \} \text{ and }
    T \vdash \L_2(\theta,\eta),
      \end{align*}
      then $\Pb(\theta,\eta) \leq_W (\Pb(\theta,\eta))^{\rfn} \leq_W \ATR_2^{\rfn}$.
    \item  $\ATR_2^{\rfn} <_W \CNN$.
  \end{enumerate}
\end{theorem}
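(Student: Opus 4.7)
The statement combines two inequalities whose proofs should be almost mechanical applications of results already established in this section, so my plan is to show how each part reduces to an earlier lemma or corollary and to verify the few hypotheses that are not immediate.

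For part (1), I would first decompose the chain $\Pb(\theta,\eta) \leq_W (\Pb(\theta,\eta))^{\rfn} \leq_W \ATR_2^{\rfn}$ into its two links. The left-hand inequality is a direct application of Lemma \ref{P-ref-strengthen}: by hypothesis $(\theta,\eta)$ is a $(\Pi^1_1,\Sigma^1_0)$-representation, which is in particular a $(\Sigma^1_\infty,\Sigma^1_0)$-representation, so the lemma applies verbatim and gives $\Pb(\theta,\eta) \leq_W (\Pb(\theta,\eta))^{\rfn}$.

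The right-hand inequality $(\Pb(\theta,\eta))^{\rfn} \leq_W \ATR_2^{\rfn}$ is set up for Theorem \ref{P-ref and provability}. To apply it I would take $\P = \Pb(\theta,\eta)$, $\Q = \ATR_2$, and use the canonical representation $(\theta',\eta')$ of $\ATR_2$ fixed earlier in the paper. That canonical representation is arithmetical, hence in particular of type $(\Sigma^1_1,\Pi^1_1)$; and $(\theta,\eta)$ is $(\Pi^1_1,\Sigma^1_0) \subseteq (\Pi^1_1,\Sigma^1_1)$ as required. The final hypothesis of Theorem \ref{P-ref and provability} asks that $T$ be contained in the set of sentences true in every $\omega$-model of $\L_2(\theta',\eta') + \ACAo$. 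By the example following Definition \ref{def of ATR etc}, $\RCAo + \L_2(\ATR_2)$ is equivalent to $\ATRo$, so every $\omega$-model of $\L_2(\ATR_2) + \ACAo$ is in particular an $\omega$-model of $\ATRo$; consequently any sentence true in all $\omega$-models of $\ATRo$ is true in all $\omega$-models of $\L_2(\ATR_2) + \ACAo$. Thus the hypothesis on $T$ transfers, Theorem \ref{P-ref and provability} applies, and we obtain $(\Pb(\theta,\eta))^{\rfn} \leq_W \ATR_2^{\rfn}$.

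Part (2) is essentially immediate: since $\ATR_2$ is arithmetically representable (via $(\theta',\eta')$ above), Corollary \ref{reflection and CNN} yields $\ATR_2^{\rfn} <_W \CNN$ directly. There is no real obstacle in any of this; the only point that might trip up a reader is the small bookkeeping step that the theory assumption is stated in terms of $\omega$-models of $\ATRo$ while Theorem \ref{P-ref and provability} is phrased in terms of $\omega$-models of $\L_2(\theta',\eta') + \ACAo$, and so the main thing the written proof needs to make explicit is the equivalence of these two classes of $\omega$-models for $(\theta',\eta')$ the canonical representation of $\ATR_2$.
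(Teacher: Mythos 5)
Your proposal is correct and follows essentially the same route as the paper, which simply cites Theorem \ref{P-ref and provability} for part (1) and Corollary \ref{reflection and CNN} for part (2). You merely make explicit two steps the paper leaves implicit: the use of Lemma \ref{P-ref-strengthen} for the link $\Pb(\theta,\eta) \leq_W (\Pb(\theta,\eta))^{\rfn}$, and the identification of the $\omega$-models of $\ATRo$ with those of $\L_2(\ATR_2) + \ACAo$.
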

\begin{proof}
  (1.) is immediate from Theorem \ref{P-ref and provability} and
  (2.) is immediate from Theorem \ref{reflection and CNN}.
\end{proof}

\begin{remark}
  In (1.) of  Theorem \ref{P leq Prfn leq ATR_2rfn}, the assumption that $(\theta,\eta)$ is $(\Pi^1_1,\Sigma^1_0)$ is essential.
  In fact, open determinacy on Baire space is provable from $\ATRo$ \cite[Theorem V.8.7]{MR2517689}, Theorem V.8.7 but a variant of it is strictly stronger than $\CNN$ \cite[Corollary 7.5]{MR4231614}.
\end{remark}

\section{More on $\omega$-models}
In this section, we introduce a few variants of effective $\omega$-models.

We recall the transfinite induction in second-order arithmetic.
\begin{definition}
  Let $\varphi$ be a formula.
  We define transfinite induction on $\varphi$, $\varphi\myhyphen\TI$, as follows:
  \begin{equation*}
    \forall W \biggl(\WO(W) \to
    \Bigl(
    \forall i \in |W| \bigl((\forall j <_W i \varphi(j)) \to \varphi(i)\bigr)
    \to \forall i \in |W| \varphi(i)
    \Bigr)
    \biggr).
  \end{equation*}
  Let $\Pi^1_1\myhyphen\TI$ denote the set of transfinite inductions on any $\Pi^1_1$ formula.
\end{definition}

Next, we introduce the second-order arithmetic version of $\omega$-model reflections.
\begin{definition}\label{def of reflection in SoA}
  Let $\varphi$ be a formula.
  We define the $\omega$-model reflection on $\varphi$, $\varphi\myhyphen\mathbf{RFN}$ as follows:
  \begin{equation*}
    \forall X (\varphi(X) \to \exists \M:\text{coded $\omega$-model}(X \in \M \land \M \models \varphi(X))).
  \end{equation*}
  Let $\Pi^1_2\myhyphen\mathbf{RFN}$ denote the set of the $\omega$-model reflection on any $\Pi^1_2$ formula.
\end{definition}

\begin{lemma}[{\cite[VIII.5.12]{MR2517689}}]\label{lemma of equiv of TI and RFN}
  Over $\ACAo$, $\Pi^1_1\myhyphen\TI$ is equivalent to $\Pi^1_2\myhyphen\mathbf{RFN}$.
\end{lemma}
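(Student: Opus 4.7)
\emph{Plan.} The plan is to prove both implications over $\ACAo$; each direction is nontrivial and the two use rather different techniques.

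For the direction $\Pi^1_2\myhyphen\mathbf{RFN} \Rightarrow \Pi^1_1\myhyphen\TI$, I would argue by contradiction. Fix a $\Pi^1_1$ formula $\varphi(i,X)$, a well-order $W$, and suppose the TI hypothesis holds while, toward contradiction, there is $i^{*} \in |W|$ with $\lnot\varphi(i^{*},X)$. The statement ``$W$ is a well-order, the inductive step holds for $\varphi$ along $W$, and $\lnot\varphi(i^{*},X)$'' can be cast as a single $\Pi^1_2$ formula $\psi(W,X,i^{*})$ (the inductive step is itself $\Pi^1_2$, and a conjunction with a $\Sigma^1_1$ clause is still $\Pi^1_2$). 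Applying $\Pi^1_2\myhyphen\mathbf{RFN}$ to $\psi$, I obtain a countable coded $\omega$-model $\M$ containing $W,X,i^{*}$ with $\M \models \psi(W,X,i^{*})$. Inside $\M$, since $W$ is well-ordered (in the sense of $\M$), the $\Sigma^1_1$ set $\{i \in |W| : \lnot\varphi(i,X)\}^{\M}$ has a $<_{W}$-least element $i_{0}$; but then the inductive step inside $\M$ forces $\M \models \varphi(i_{0},X)$, contradicting minimality. (The absoluteness arguments — $\Pi^1_1$ downward from $V$ to $\M$ and $\Sigma^1_1$ upward — let me transport the hypotheses in and the contradiction out.)

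For the direction $\Pi^1_1\myhyphen\TI \Rightarrow \Pi^1_2\myhyphen\mathbf{RFN}$, I would construct the $\omega$-model by iterated closure along a well-order. Write $\varphi(X) \equiv \forall Y \exists Z\, \theta(X,Y,Z)$ with $\theta$ arithmetical. Given a parameter $X$ with $\varphi(X)$, fix a sufficiently long computable well-order $W$ and define a hierarchy $\langle M_{i}\rangle_{i\in|W|}$ recursively by setting $M_{\min_{W}}=X$ and, at stage $i$, forming $M_{i}$ as the join of the Turing jump of the previous stages together with witnesses $Z$ to $\theta(X,Y,Z)$ for each $Y$ in a previous stage that is enumerated by the $i$-th request. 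The union $\M := \bigcup_{i} M_{i}$ is then closed under Turing reducibility and jump (so $\M \models \ACAo$) and satisfies $\M \models \varphi(X)$ because every $Y \in \M$ appears in some $M_{i}$ and its witness is inserted at a later stage. To see that this construction is well-defined in $\ACAo + \Pi^1_1\myhyphen\TI$, I would formulate the predicate ``$\langle M_{j}\rangle_{j\leq_{W}i}$ is a correct initial segment of the recursion'' as a $\Pi^1_1$ formula of its parameters and apply $\Pi^1_1\myhyphen\TI$ along $W$ to obtain existence (and uniqueness modulo the canonical choice of witnesses) at every stage.

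The main obstacle is in the second direction: arranging the recursion so that the correctness predicate really is $\Pi^1_1$ rather than of higher complexity. In particular, the witness-selection step embeds an existential quantifier over $Z$, which naively makes ``a correct hierarchy up to $i$ exists'' a $\Pi^1_2$ statement; the fix is to fix in advance a canonical enumeration of candidate witnesses and to demand ``$Z$ is the first witness satisfying $\theta$'', which is $\Pi^1_1$ relative to the hierarchy, so that the overall correctness predicate collapses back into $\Pi^1_1$. Once this is handled, $\Pi^1_1\myhyphen\TI$ supplies the full hierarchy and the rest of the verification is routine, as in Simpson's treatment \cite[VIII.5.12]{MR2517689}.
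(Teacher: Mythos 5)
The paper itself offers no proof of this lemma---it is imported wholesale from Simpson \cite[VIII.5.12]{MR2517689}---so your argument has to stand on its own. Your first direction is essentially the standard one and is correct up to a point of bookkeeping: the least counterexample is not found ``inside $\M$,'' since an $\omega$-model of $\ACAo$ need not satisfy any $\Sigma^1_1$ least-element principle. Rather, the set $\{i \in |W| : \M \models \lnot\varphi(i,X)\}$ is arithmetical in the code of $\M$ (satisfaction of a fixed $\Sigma^1_1$ formula in a coded $\omega$-model is arithmetical in the code), hence exists by $\ACAo$ in the ambient universe, and it has a $<_W$-least element because $W$ is genuinely well-founded there; the inductive step inside $\M$ then gives the contradiction. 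With that rewording this half is fine.

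The direction $\Pi^1_1\myhyphen\TI \Rightarrow \Pi^1_2\myhyphen\mathbf{RFN}$ has a genuine gap, and the obstacle you flag is not the only one, nor does your fix remove it. First, even after the correctness predicate for initial segments is made $\Pi^1_1$, the statement you propose to prove by induction along $W$ is ``a correct initial segment up to $i$ \emph{exists},'' which carries an existential set quantifier in front of a $\Pi^1_1$ matrix and is therefore $\Sigma^1_2$; $\Pi^1_1\myhyphen\TI$ does not apply to it. Second, the ``first witness'' device is unavailable: after Kleene normal form, the canonical witness to a $\Sigma^1_1$ condition $\exists Z\,\theta(X,Y,Z)$ is the leftmost path of an ill-founded tree, and the existence of leftmost paths is equivalent to $\Pi^1_1\myhyphen\mathbf{CA}_0$ over $\ACAo$ (this is recorded in Section 6 of this very paper), which is strictly stronger than $\ACAo + \Pi^1_1\myhyphen\TI$; so even a single successor step of your recursion is not justified from the hypotheses. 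If one instead chose witnesses arithmetically in the earlier stages, every set produced along a computable well-order would lie in $\HYP(X)$, and no $\omega$-model contained in $\HYP(X)$ can satisfy, for example, the $\Pi^1_2$ axiom of $\ATRo$; so no recursion of the shape you describe can yield full $\Pi^1_2$ reflection. A correct proof must avoid selecting witnesses altogether: the standard route is to argue by contradiction, applying $\Pi^1_1\myhyphen\TI$ along (the Kleene--Brouwer ordering of) a well-founded search tree for the desired coded $\omega$-model, rather than building the model by a definable transfinite recursion.
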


We consider effective $\omega$-models of $\Pi^1_1\myhyphen\TI$.
\begin{definition}
  Let $\P$ be a problem.
  We define the $\omega$-model reflection of $\P + \Pi^1_1\myhyphen\TI$ as follows:
  \begin{itembox}[l]{$(\P + \Pi^1_1\myhyphen\TI)^{\rfn}$}
  \begin{description}
      \item[\sf Input] Any set $X$.
      \item[\sf Output] A tuple $\mathcal{M} = (\langle M_i \rangle_i,f,g,e)$ such that
      \begin{align*}
          &(\langle M_i \rangle_i,f,g) \text{ is an effective $\omega$-model of $\ACAo$}, \\
          &M_e = X,\\
          &\forall i(M_i \in \dom \P \to \exists j (M_j \in \P(M_i))), \\
          &\M \models \Pi^1_1\myhyphen\TI.
      \end{align*}
  \end{description}
\end{itembox}
\end{definition}
Thus, $(\P + \Pi^1_1\myhyphen\TI)^\rfn$ is a stronger variant of $\P^\rfn$.
We will see that for any arithmetically representable problem $\P$, $(\P + \Pi^1_1\myhyphen\TI)^\rfn$ is much stronger than $\P^\rfn$.

\begin{lemma}\label{reflection of TI}
  Let $\P$ be an arithmetically representable problem.
  Then $(\P^{\rfn} + \Pi^1_1\myhyphen\TI)^\rfn \leq_W (\P + \Pi^1_1\myhyphen\TI)^\rfn$.
\end{lemma}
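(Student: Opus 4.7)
The plan is to show that the reduction is witnessed by the identity: any output of $(\P + \Pi^1_1\myhyphen\TI)^\rfn(X)$ is already an output of $(\P^\rfn + \Pi^1_1\myhyphen\TI)^\rfn(X)$. Fix an input $X$ and an effective $\omega$-model $\M = (\langle M_i \rangle_i, f, g, e)$ produced by $(\P + \Pi^1_1\myhyphen\TI)^\rfn$. Three of the four required output conditions for $(\P^\rfn + \Pi^1_1\myhyphen\TI)^\rfn(X)$ — namely that $\M$ is an effective $\omega$-model of $\ACAo$, that $M_e = X$, and that $\M \models \Pi^1_1\myhyphen\TI$ — are already part of the hypothesis. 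The real content is the remaining clause $\forall i (M_i \in \dom \P^\rfn \to \exists j \, M_j \in \P^\rfn(M_i))$.

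Since $\P$ is arithmetically representable, the proof of Theorem \ref{Thm of omega-model incomp in Weih} provides an arithmetical representation $(\theta',\eta')$ of $\P^\rfn$. By Remark \ref{If P is arithmetical, then the last condition is equiv}, the clause above reduces to verifying $\M \models \L_2(\theta',\eta')$, i.e., that inside $\M$ every set $X$ is contained in an effective coded $\omega$-model of $\ACAo + \L_2(\theta,\eta)$. This is where the additional $\Pi^1_1\myhyphen\TI$ assumption does its work: by Lemma \ref{lemma of equiv of TI and RFN}, $\M \models \ACAo + \Pi^1_1\myhyphen\TI$ implies $\M \models \Pi^1_2\myhyphen\mathbf{RFN}$.

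Consider the $\Pi^1_2$ formula $\varphi(X) \equiv \L_2(\theta,\eta) \land \forall Y \exists Z (Z = Y')$, which packages $\L_2(\P)$ together with the jump-existence axiom characterising $\omega$-models of $\ACAo$. Since $\M$ models both conjuncts and $\varphi$ does not really depend on $X$, $\M \models \varphi(X)$ for every $X \in \M$. Applying $\Pi^1_2\myhyphen\mathbf{RFN}$ inside $\M$ yields, for each such $X$, a countable coded $\omega$-model $\mathcal{N} \in \M$ with $X \in \mathcal{N}$ and $\mathcal{N} \models \varphi(X)$; in particular $\mathcal{N}$ is closed under Turing jumps (and, being a coded $\omega$-model, under $\leq_T$ and $\oplus$) and satisfies $\L_2(\P)$.

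It remains to upgrade $\mathcal{N}$ to an effective $\omega$-model inside $\M$. Working in $\M$, which satisfies $\ACAo$, we use arithmetical comprehension to define $e$, $f$, $g$ as the least-index selectors: $e = \mu k (N_k = X)$, $f(i) = \mu k (N_k = (N_i)')$, $g(i,j) = \mu k (N_k = N_i \oplus N_j)$. Each equality condition is arithmetical in $\mathcal{N}$, so the selectors exist in $\M$, and the resulting tuple $(\mathcal{N}, f, g, e) \in \M$ witnesses $\eta'(X, \cdot)$. Hence $\M \models \L_2(\P^\rfn)$, completing the verification.

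The main obstacle is really conceptual rather than technical: one needs to recognise that the extra strength of $\Pi^1_1\myhyphen\TI$, via its equivalence with $\Pi^1_2$-reflection, is exactly what is needed to internally produce the effective sub-$\omega$-models demanded by $\L_2(\P^\rfn)$. Once that is seen, the effectivization step is routine inside any model of $\ACAo$, and the Weihrauch reduction is literally the identity on inputs and outputs.
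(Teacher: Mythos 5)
Your proposal is correct and follows essentially the same route as the paper: the reduction is the identity, the only content is the clause $\forall i(M_i \in \dom \P^{\rfn} \to \exists j\, M_j \in \P^{\rfn}(M_i))$, and this is obtained by converting $\Pi^1_1\myhyphen\TI$ into $\Pi^1_2\myhyphen\mathbf{RFN}$ inside $\M$, reflecting the $\Pi^1_2$ sentence expressing $\ACAo + \L_2(\theta,\eta)$, and then effectivizing the resulting coded $\omega$-model within $\M$ using arithmetical comprehension. The one small imprecision is that a coded $\omega$-model is not automatically closed under $\leq_T$ and $\oplus$, so your reflected formula $\varphi$ should conjoin the full (finite, $\Pi^1_2$) axiomatization of $\ACAo$ rather than only jump-existence, exactly as the paper's formula $\xi$ does.
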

\begin{proof}
  Let $A$ be an input for $(\P^{\rfn} + \Pi^1_1\myhyphen\TI)^\rfn$ and
  $\M = (\langle M_i \rangle_i,f,g,e)$ be an output of $(\P + \Pi^1_1\myhyphen\TI)^\rfn(A)$.
  We show that $\M \in (\P^{\rfn} + \Pi^1_1\myhyphen\TI)^\rfn(A)$.
  Since $\M \models \Pi^1_1\myhyphen\TI$ by definition, it is enough to show that
  for any $X \in \langle M_i \rangle_i$, there exists $\mathcal{N} = (\langle N_i \rangle_i,\widetilde{f},\widetilde{g},\widetilde{e}) \in \langle M_i \rangle_i$ such that  $\mathcal{N} \in \P^{\rfn}(X)$.

  Let $(\theta,\eta)$ be an arithmetical representation for $\P$. Now $\M \models \L_2(\theta,\eta) + \Pi^1_2\mathchar`-\mathsf{RFN}$, and $\L_2(\theta,\eta)$ is a $\Pi^1_2$ sentence.
  Thus we have
  \begin{align*}
    \M \models \forall X \exists \langle N_i \rangle_i (X \in \langle N_i  \rangle_i \land \langle N_i \rangle_i \models \ACAo + \L_2(\theta,\eta)).
  \end{align*}
  We note that $X \in \langle N_i  \rangle_i \land \langle N_i \rangle_i \models \ACAo + \L_2(\theta,\eta)$ can be written as the following arithmetical formula
  $\xi(X,\langle N_i\rangle_i)$:
 \begin{align*}
   \xi(X,\langle N_i\rangle_i) \equiv \,
   &X \in \langle N_i \rangle_i,\\
   &\forall i \forall Y (Y \leq_T N_i \to Y \in \langle N_i \rangle_i), \\
   &\forall i \exists j (N_j = N_i'), \\
   &\forall i,j \exists k (N_k = N_i \oplus N_j), \\
   &\forall i (\theta(N_i) \to \exists j  \eta(N_i,N_j)).
 \end{align*}
  Take arbitrary $X$ and $\langle N_i\rangle_i \in \M$ such that  $\M \models \xi(X,\langle N_i\rangle_i)$.
  Then $\xi(X,\langle N_i\rangle_i))$ is also true in the intended model.
  Now
  \begin{align*}
    \M \models \exists \widetilde{e} (X = N_{\widetilde{e}}) \land \exists \widetilde{f} \forall i (N_{\widetilde{f}(i)} = N_i') \land \exists \widetilde{g} \forall i,j (N_{\widetilde{g}(i,j)} = N_i \oplus N_j).
  \end{align*}
  Pick such $\widetilde{f},\widetilde{g}$ and $\widetilde{e}$, then
 $\mathcal{N} = (\langle N_i \rangle_i,\widetilde{f},\widetilde{g},\widetilde{e}) \in \langle M_i \rangle_i \cap \P^{\rfn}(X)$.
\end{proof}

\begin{theorem}\label{Thm equiv of ref TI}
  Let $\P$ be an arithmetically representable problem.
  Then $(\P^{\rfn} + \Pi^1_1\myhyphen\TI)^\rfn \equiv_W (\P + \Pi^1_1\myhyphen\TI)^\rfn$.
\end{theorem}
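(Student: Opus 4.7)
The plan is as follows. Lemma \ref{reflection of TI} already gives one direction, namely $(\P^{\rfn} + \Pi^1_1\myhyphen\TI)^\rfn \leq_W (\P + \Pi^1_1\myhyphen\TI)^\rfn$, so it remains to prove the reverse inequality $(\P + \Pi^1_1\myhyphen\TI)^\rfn \leq_W (\P^{\rfn} + \Pi^1_1\myhyphen\TI)^\rfn$. My claim is that the identity functionals $\Phi = \Psi = \mathrm{id}$ witness this reduction; equivalently, every output of $(\P^{\rfn} + \Pi^1_1\myhyphen\TI)^\rfn(X)$ is already an output of $(\P + \Pi^1_1\myhyphen\TI)^\rfn(X)$.

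To verify this, I would fix an input $X$ and an arbitrary $\M = (\langle M_i\rangle_i,f,g,e) \in (\P^{\rfn} + \Pi^1_1\myhyphen\TI)^\rfn(X)$. Three of the four defining conditions of $(\P + \Pi^1_1\myhyphen\TI)^\rfn$ are immediate from the definition of $\M$: $(\langle M_i\rangle_i,f,g)$ is an effective $\omega$-model of $\ACAo$, $M_e = X$, and $\M \models \Pi^1_1\myhyphen\TI$. The only nontrivial thing to check is the closure condition
\begin{equation*}
\forall i\,(M_i \in \dom\P \to \exists j\,(M_j \in \P(M_i))).
\end{equation*}

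For this, fix $i$ with $M_i \in \dom\P$. Since $\dom(\P^{\rfn}) = \mathcal{P}(\omega)$, the closure of $\M$ under $\P^{\rfn}$ yields some $k$ with $M_k \in \P^{\rfn}(M_i)$. Decoding, write $M_k = \mathcal{N} = (\langle N_n\rangle_n,\widetilde{f},\widetilde{g},\widetilde{e})$, which is itself an effective $\omega$-model of $\ACAo$ with $N_{\widetilde{e}} = M_i$ and $\forall n\,(N_n \in \dom\P \to \exists m\,(N_m \in \P(N_n)))$. Applying this last clause at $n = \widetilde{e}$ produces $m$ with $N_m \in \P(M_i)$. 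Finally, since $N_m \leq_T M_k$ and $\langle M_i\rangle_i$ is closed under $\leq_T$, there is some index $j$ with $M_j = N_m$, as required.

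There is no real obstacle here; the argument is a direct unpacking of the definitions, using only that $\P \leq_W \P^{\rfn}$ in the very strong sense that any effective $\omega$-model closed under $\P^{\rfn}$ (which in particular contains codes of effective $\omega$-models closed under $\P$) is automatically closed under $\P$, via Turing downward closure inside an effective $\omega$-model of $\ACAo$. Combined with Lemma \ref{reflection of TI}, this yields the Weihrauch equivalence.
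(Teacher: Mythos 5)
Your proposal is correct and follows essentially the same route as the paper: both directions are handled identically (Lemma \ref{reflection of TI} for one, and the identity functionals for the other), and your verification of the closure condition — using that $\M$ contains an output $\mathcal{N}\in\P^{\rfn}(M_i)$, extracting a $\P$-solution from $\mathcal{N}$, and pulling it back into $\langle M_i\rangle_i$ via Turing downward closure — is exactly the paper's argument.
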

\begin{proof}
  It is enough to show that $(\P^{\rfn} + \Pi^1_1\myhyphen\TI)^\rfn \geq_W (\P + \Pi^1_1\myhyphen\TI)^\rfn$.
  Take a set $X$ and $\M = (\langle M_i\rangle_i,f,g,e) \in (\P^{\rfn} + \Pi^1_1\myhyphen\TI)^\rfn(X)$.
  We show that $\M \in (\P + \Pi^1_1\myhyphen\TI)^{\rfn}(X)$.
  It is enough to show that
  \begin{align*}
    \forall i(M_i \in \dom(\P) \to \exists j (M_j \in \P(M_i)).
  \end{align*}
  Let $M_{i_0}$ be such that $M_{i_0} \in \dom(\P)$.
  Since $\M$ is an output of $(\P^{\rfn} + \Pi^1_1\myhyphen\TI)^{\rfn}$ and $M_{i_0}$ is an input for $\P^{\rfn}$,
  there exists $M_n = \langle M_{n,k} \rangle_k$ such that
  \begin{align*}
    &M_{i_0} \in \langle M_{n,k} \rangle_k, \\
    &\forall k (M_{n,k} \in \dom \P \to \exists l (M_{n,l} \in \P(M_{n,k}))).
  \end{align*}
  Since $M_{i_0} \in \dom \P$, there exists $l$ such that $M_{n,l} \in \P(M_{i_0})$. Take such an $l$.
  Then $M_{n,l} \leq_T M_n$ and hence $M_{n,l} \in \langle M_i \rangle_i$. This completes the proof.
\end{proof}

\begin{definition}
  For a problem $\P$, we define $\P^{(n\mathchar`-\rfn)}$ as follows.
  \begin{align*}
    &\P^{(0\mathchar`-\rfn)} = \P,\\
    &\P^{(n+1\mathchar`-\rfn)} = (\P^{(n\mathchar`-\rfn)})^{\rfn}.
  \end{align*}
\end{definition}
We remark that if $\P$ is an arithmetically representable problem, then so is $\P^{\rfn}$.
Thus for any arithmetically representable problem $\P$ and any $n \in \omega$,  $\P^{(n \mathchar`- \rfn)} \leq_W \P^{(n+1 \mathchar`-\rfn)}$ but $\P^{(n+1\mathchar`-\rfn)} \not \leq_{W}^a \P^{(n\mathchar`-\rfn)}$
(see Corollary \ref{rfn is enough strong}).

\begin{corollary}\label{rfn iteration TI}
  Let $\P$ be an arithmetically representable problem.
  Then, for any $n$, $\P^{(n \mathchar`- \rfn)} <_W (\P^{(n \mathchar`- \rfn)} + \Pi^1_1\myhyphen\TI)^{\rfn} \equiv_W (\P + \Pi^1_1\myhyphen\TI)^{\rfn}$.
\end{corollary}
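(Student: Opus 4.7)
The plan is to split the statement into the equivalence $(\P^{(n\mathchar`-\rfn)} + \Pi^1_1\myhyphen\TI)^\rfn \equiv_W (\P + \Pi^1_1\myhyphen\TI)^\rfn$ and the strict inequality $\P^{(n\mathchar`-\rfn)} <_W (\P^{(n\mathchar`-\rfn)} + \Pi^1_1\myhyphen\TI)^\rfn$. Both parts should reduce directly to results already established in this section together with the remark that the $\omega$-model reflection of an arithmetically representable problem is again arithmetically representable.

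For the equivalence, I would proceed by induction on $n$. The case $n=0$ is trivial. For the inductive step, observe that $\P^{(n\mathchar`-\rfn)} = (\P^{((n-1)\mathchar`-\rfn)})^\rfn$ by definition, so Theorem \ref{Thm equiv of ref TI} applied to the arithmetically representable problem $\P^{((n-1)\mathchar`-\rfn)}$ (which is arithmetically representable by the remark preceding the corollary) gives
\[
  (\P^{(n\mathchar`-\rfn)} + \Pi^1_1\myhyphen\TI)^\rfn = ((\P^{((n-1)\mathchar`-\rfn)})^\rfn + \Pi^1_1\myhyphen\TI)^\rfn \equiv_W (\P^{((n-1)\mathchar`-\rfn)} + \Pi^1_1\myhyphen\TI)^\rfn,
\]
and the inductive hypothesis identifies the right-hand side with $(\P + \Pi^1_1\myhyphen\TI)^\rfn$.

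For the strict inequality, the reduction $\P^{(n\mathchar`-\rfn)} \leq_W (\P^{(n\mathchar`-\rfn)} + \Pi^1_1\myhyphen\TI)^\rfn$ is immediate from Lemma \ref{P-ref-strengthen}, since any output of $(\P^{(n\mathchar`-\rfn)} + \Pi^1_1\myhyphen\TI)^\rfn(X)$ is a fortiori an output of $(\P^{(n\mathchar`-\rfn)})^\rfn(X)$. For the non-reducibility, the same observation yields $(\P^{(n\mathchar`-\rfn)} + \Pi^1_1\myhyphen\TI)^\rfn \geq_W \P^{((n+1)\mathchar`-\rfn)}$. Since $\P^{(n\mathchar`-\rfn)}$ is arithmetically representable, Theorem \ref{Thm of omega-model incomp in Weih} applied to $\P^{(n\mathchar`-\rfn)}$ gives $\P^{((n+1)\mathchar`-\rfn)} \not\leq_W^a \P^{(n\mathchar`-\rfn)}$, and in particular $\not\leq_W$, so the stronger problem $(\P^{(n\mathchar`-\rfn)} + \Pi^1_1\myhyphen\TI)^\rfn$ cannot Weihrauch reduce to $\P^{(n\mathchar`-\rfn)}$ either.

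The main thing to verify carefully is that Theorem \ref{Thm equiv of ref TI} applies at each inductive step, which just requires knowing that the class of arithmetically representable problems is closed under $\bullet^\rfn$; this is explicitly noted just before the corollary. There is no real obstacle beyond chaining the earlier results together correctly.
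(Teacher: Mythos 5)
Your proof is correct and follows essentially the same route as the paper: the equivalence is obtained by iterating Theorem \ref{Thm equiv of ref TI} (justified by closure of arithmetical representability under $\bullet^{\rfn}$), and the strictness comes from $\P^{(n\mathchar`-\rfn)} <_W \P^{((n+1)\mathchar`-\rfn)} \leq_W (\P^{(n\mathchar`-\rfn)} + \Pi^1_1\myhyphen\TI)^{\rfn}$, where the paper cites Corollary \ref{rfn is enough strong} and you unfold it into its ingredients (Lemma \ref{P-ref-strengthen} and Theorem \ref{Thm of omega-model incomp in Weih}).
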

\begin{proof}
  Let $\P$ be an arithmetically representable problem.
  Recall that for any arithmetically representable problem, its reflection is also arithmetically representable.
  Thus, $\P^{(n\mathchar`-\rfn)}$ is also arithmetically representable for any $n$.

  The inequality $\P^{(n \mathchar`- \rfn)} <_W \P^{(n+1 \mathchar`- \rfn)}$ follows from Corollary \ref{rfn is enough strong}. Thus, $\P^{(n \mathchar`- \rfn)} <_W \P^{(n+1 \mathchar`- \rfn)} \leq_W (\P^{(n \mathchar`- \rfn)} + \Pi^1_1\myhyphen\TI)^{\rfn}$.

  For the equivalence $(\P^{(n \mathchar`- \rfn)} + \Pi^1_1\myhyphen\TI)^{\rfn} \equiv_W (\P + \Pi^1_1\myhyphen\TI)^{\rfn}$, we use Theorem \ref{Thm equiv of ref TI} repeatedly.
  Since each $\P^{(n\mathchar`-\rfn)}$ is arithmetically representable,
  we have $(\P^{(n+1 \mathchar`- \rfn)} + \Pi^1_1\myhyphen\TI)^{\rfn} \equiv_W (\P^{(n \mathchar`-\rfn)} + \Pi^1_1\myhyphen\TI)^{\rfn}$ for each $n$.
\end{proof}

Next, we consider weaker versions of $\beta$-models.
This will be used to show the strictness of the hierarchy of $\LPP$s in the next section.
For the basic things for $\beta$-models, see \cite{MR2517689}.

\begin{definition}
  Let $X$ be a set.
  We write $Y \in \Delta^0_n(X)$ if $Y$ is $\Delta^0_n$ definable from $X$.
  Therefore, $Y \in \Delta^0_{n+1}(X)$ if and only if $Y$ is computable from $X^{(n)}$, the $n$-th Turing jump of $X$.
  We also write $Y \in \Sigma^0_n(X)$ if $Y$ is $\Sigma^0_n$ definable from $X$.
\end{definition}

\begin{definition}
  Let $\M = (\langle M_i \rangle_i,f,g,e)$ be an effective $\omega$-model.
  We say $\M$ is a $\Delta^0_n \beta$-model if
  for any $\Pi^0_2$ formula $\varphi(X,\vec{V})$ and $\vec{A} \in \M$,
  \begin{equation*}
     \exists X \in \Delta^0_n(\M) \varphi(X,\vec{A})
     \Leftrightarrow
     \M \models \exists X \varphi(X,\vec{A}).
  \end{equation*}
\end{definition}

\begin{remark}
  Since $\varphi$ is arithmetical in the above definition, the condition
  \begin{equation*}
     \exists X \in \Delta^0_n(\M) \varphi(X,\vec{A})
     \Leftrightarrow
     \M \models \exists X \varphi(X,\vec{A})
  \end{equation*}
  is equivalent to
  \begin{equation*}
     \exists X \in \Delta^0_n(\M) \varphi(X,\vec{A})
     \Rightarrow
     \M \models \exists X \varphi(X,\vec{A}).
  \end{equation*}
\end{remark}

\begin{lemma}
  Let $\M$ be an effective $\omega$-model.
  Then, $\M$ is a $\Delta^0_n \beta$-model if and only if for any linear order $L \in \M$
  \begin{equation*}
     \exists f \in \Delta^0_n(\M) (f \text{ is a descending sequence of $L$})
     \Leftrightarrow
     \M \models \lnot \WO(L).
  \end{equation*}
\end{lemma}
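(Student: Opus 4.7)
The forward direction is immediate: the property ``$X$ is a descending sequence of $L$'' is expressible by a $\Pi^0_1$ (in particular $\Pi^0_2$) formula, so the $\Delta^0_n\beta$-model condition applied to this formula yields exactly the claimed equivalence for arbitrary $L \in \M$.

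For the backward direction, my plan is to use the Kleene-Brouwer construction to reduce an arbitrary $\Pi^0_2$ existence statement to the ill-foundedness of a linear order in $\M$. Fix a $\Pi^0_2$ formula $\varphi(X,\vec{V}) \equiv \forall k\,\exists m\,\psi(k,m,X,\vec{V})$ with $\psi$ bounded, and $\vec{A} \in \M$. The implication $\exists X \in \Delta^0_n(\M)\,\varphi(X,\vec{A}) \Rightarrow \M \models \exists X\,\varphi(X,\vec{A})$ follows at once from arithmetical absoluteness for $\omega$-models, so only the converse needs real work. I would form the standard Skolemization tree $T \leq_T \vec{A}$ whose nodes encode a finite initial segment of a prospective witness together with explicit witnesses $m_k$ for each existential; then $T$ and $\KB(T)$ both lie in $\M$, and the equivalence ``$T$ is ill-founded $\Leftrightarrow \exists X\,\varphi(X,\vec{A})$'' is provable in $\ACAo$ and so holds internally in $\M$. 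Combining this with the Kleene-Brouwer lemma gives $\M \models \lnot \WO(\KB(T))$, and the hypothesis of the present lemma then supplies a descending sequence $f = (\sigma_i)_{i \in \omega} \in \Delta^0_n(\M)$ of $\KB(T)$.

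It remains to decode $f$ into an actual witness. A direct analysis of the $\KB$ order shows that for each fixed $k$ the sequence $(\sigma_i(k))_i$ is weakly decreasing in $i$ on the indices where it is defined, hence eventually constant at some value $a_k$; and $|\sigma_i|$ must be unbounded, for otherwise the $\sigma_i$ would range over a finite set and could not descend infinitely in $\KB(T)$. Hence the coordinatewise limit $X = (a_0, a_1, \ldots)$ is a well-defined path of $T$, and decoding yields a witness to $\varphi(X, \vec{A})$. This limit extraction is $\Delta^0_2$ in $f$ and prima facie only places $X$ in $\Delta^0_{n+1}(\M)$, but closure of $\M$ under Turing jumps collapses $\Delta^0_n(\M)$ and $\Delta^0_{n+1}(\M)$ (for $n \geq 1$) to $\M$ itself, so $X \in \Delta^0_n(\M)$ as required. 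The main obstacle I expect is in the Skolemization step: one must arrange that the equivalence between ill-foundedness of $T$ and $\exists X\,\varphi(X,\vec{A})$ is genuinely provable in $\ACAo$ so that it internalizes to $\M$, and check that every step of the decoding stays inside the closure operations available to an effective $\omega$-model; once this is set up, the $\KB$-analysis and limit extraction are routine.
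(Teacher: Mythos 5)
Your forward direction is fine, but your backward direction proves the wrong implication. In the definition of a $\Delta^0_n\beta$-model, $\Delta^0_n(\M)$ denotes the sets $\Delta^0_n$-definable from the \emph{code} of $\M$, i.e.\ from the single set coding the sequence $\langle M_i\rangle_i$, which is in general not itself a member of $\M$; consequently $\Delta^0_n(\M)$ properly contains $\M$, and it is \emph{not} collapsed to $\M$ by closure under jumps --- if it were, every effective $\omega$-model would vacuously be a $\Delta^0_n\beta$-model and the notion would be empty. For the same reason, the implication $\exists X\in\Delta^0_n(\M)\,\varphi(X,\vec A)\Rightarrow\M\models\exists X\,\varphi(X,\vec A)$ does \emph{not} follow from arithmetical absoluteness: the witness $X$ need not lie in $\M$, so its existence says nothing about what $\M$ satisfies. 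It is the converse implication that is trivial (a witness inside $\M$ already lies in $\Delta^0_n(\M)$ and is a true witness by absoluteness) --- this is exactly the content of the remark following the definition. You have the two directions swapped: the machinery you set up (Skolem tree, internalized $\KB$ equivalence, limit extraction) is spent on the direction that needs no work, while the substantive direction is dismissed by an appeal to absoluteness that does not apply. The same misunderstanding resurfaces in your claim that jump-closure identifies $\Delta^0_n(\M)$ and $\Delta^0_{n+1}(\M)$ with $\M$.

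The repair runs your construction in the opposite direction, and is the paper's proof. Given $X\in\Delta^0_n(\M)$ with $\varphi(X,\vec A)\equiv\forall x\exists y\,\theta(x,y,X,\vec A)$ true, set $f(x)=\mu y.\,\theta(x,y,X,\vec A)$, so that $f\leq_T X\oplus\vec A$ and $(X,f)$ is a path of the Skolem tree $T\leq_T\vec A$; the sequence of its initial segments is then a descending sequence of $\KB(T)$ lying in $\Delta^0_n(\M)$ (no jump is needed at this step, which is why the level $n$ is preserved). The hypothesis on linear orders, in its substantive direction, yields $\M\models\lnot\WO(\KB(T))$, and the $\ACAo$-provable equivalence between ill-foundedness of $\KB(T)$ and $\exists X\,\varphi(X,\vec A)$, internalized in $\M$, gives $\M\models\exists X\,\varphi(X,\vec A)$. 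Your coordinatewise-limit decoding of a $\KB$-descending sequence is then not needed; as stated it is also shaky, since extracting a path from a $\KB$-descending sequence costs extra jumps, which is precisely what this arrangement avoids.
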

\begin{proof}
  Let $\M$ be an effective $\omega$-model.
  We note that the condition `$f$ is a descending sequence of $L$' is $\Pi^0_2$. Thus,  if $\M$ is a $\Delta^0_n\beta$-model and $L \in \M$ is a linear order, then
  \begin{equation*}
     \exists f \in \Delta^0_n(\M) (f \text{ is a descending sequence of $L$})
     \Leftrightarrow
     \M \models \lnot \WO(L).
  \end{equation*}

  Conversely, assume that
  for any linear order $L \in \M$,
  \begin{equation*}
     \exists f \in \Delta^0_n(\M) (f \text{ is a descending sequence of $L$})
    \Leftrightarrow
    \M \models \lnot \WO(L)
  \end{equation*}
  and show that $\M$ is a $\Delta^0_n \beta$-model.
  Let $\varphi(X,\vec{V}) \equiv \forall x \exists y \theta(x,y,X,\vec{V})$ be a $\Pi^0_2$ formula.
  We will show that for any $\vec{A} \in \M$, if $\exists X \in \Delta^0_n(\M) \varphi(X,\vec{A})$, then $\M \models \exists X \varphi(X,\vec{A})$.

  Let $\vec{A} \in \M$.
  Let $T \subseteq (2 \times \N)^{<\N}$ be an $\vec{A}$-computable tree such that $(X,f) \in [T] \Leftrightarrow \forall x \theta(x,f(x),X,\vec{A})$ for any $X$ and $f$.
  Then, any $\omega$-model of $\ACAo$ including $\vec{A}$ satisfies that
  \begin{align*}
    \lnot \WO(\KB(T)) \Leftrightarrow \exists X \varphi(X,\vec{A})
    \end{align*}
  where $\KB$ denotes the Kleene-Brouwer ordering.
  Assume $\exists X \in \Delta^0_n(\M) \varphi(X,\vec{A})$.
  Let $X \in \Delta^0_n(\M)$ such that $\varphi(X,\vec{A})$ holds. Then, $\forall x \exists y \theta(x,y,X,\vec{A})$ holds.
  Define $f \leq_T X \oplus \vec{A}$ by $f(x) = \mu y. \theta(x,y,X,\vec{A})$.
  Then $(X,f)$ is a path of $T$ and hence a descending sequence of $\KB(T)$.
  Since $X$ is $\Delta^0_n(\M)$ and $\vec{A}$ is $\Delta^0_0(\M)$, $(X,f)$ is $\Delta^0_n(\M)$.
  Thus $(X,f)$ is a $\Delta^0_n(\M)$-definable descending sequence of $\KB(T)$.
  By assumption $\M \models \lnot \WO(\KB(T))$ and hence $\M \models \exists X \varphi(X)$.
\end{proof}

\begin{lemma}\label{Delta0nbeta model is a model of Pi11ti}
  Let $\M$ be a $\Delta^0_1 \beta$-model. Then $\M \models \Pi^1_1\myhyphen\TI$.
\end{lemma}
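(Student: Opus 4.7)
The plan is to argue by contradiction using the equivalent characterization of the $\Delta^0_1\beta$-model property on linear orders. Suppose $\M$ is a $\Delta^0_1\beta$-model but $\M\not\models\Pi^1_1\myhyphen\TI$. Then there exist a $\Pi^1_1$-formula $\varphi(i)$ (with parameters $\vec A\in\M$) and a well-order $W\in\M$ with $\M\models\WO(W)$ such that progressiveness of $\varphi$ along $W$ holds in $\M$ yet $\M\models\exists i_0\in|W|\,\lnot\varphi(i_0)$. By Kleene's normal form I write $\varphi(i)\Leftrightarrow [T_i]=\varnothing$ uniformly, where each $T_i\subseteq\omega^{<\omega}$ is $\vec A$-computable.

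First, I would build a combined tree $U$ whose nodes are finite sequences $\langle(i_0,v_0),(i_1,v_1),\ldots,(i_{n-1},v_{n-1})\rangle$ with $i_0>_W i_1>_W\cdots>_W i_{n-1}$ in $|W|$ and $\langle v_k,\ldots,v_{n-1}\rangle\in T_{i_k}$ for every $k<n$; a path of $U$ thus encodes a $<_W$-descending sequence together with infinite paths in the corresponding $T_{i_k}$'s. Since $U\leq_T W\oplus\vec A$ we have $U\in\M$, and any path of $U$ in $\M$ would yield a $<_W$-descending sequence in $\M$, so $\M\models\WO(W)$ forces $\M\models [U]=\varnothing$ and hence $\M\models\WO(\KB(U))$.

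The main step is to exhibit a path of $U$ lying in $\Delta^0_1(\M)$; via the Kleene--Brouwer correspondence this yields a $\Delta^0_1(\M)$-descending sequence of $\KB(U)$, and applying the equivalent formulation of the $\Delta^0_1\beta$-property to the linear order $\KB(U)\in\M$ then forces $\M\models\lnot\WO(\KB(U))$, a contradiction. To produce such a path, observe that $B=\{i\in|W|:\M\models\lnot\varphi(i)\}$ is nonempty (by the failure assumption) and has no $<_W$-minimum (by progressiveness in $\M$), and that for each $i\in B$ some $M_j\in\M$ is a genuine path of $T_i$. Using the code of $\M$ as an oracle, I would construct the path of $U$ by interleaving a descending chain in $B$ with partial witnesses drawn from $\M$, extending by one level at a time, so that at each stage only finitely many $\M$-decidable consistency checks $v_k\in T_{i_k}$ are required.

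The principal obstacle is precisely the construction in the previous paragraph: verifying $i\in B$ outright requires an infinite $\M$-computation, so a naive descending chain through $B$ lies only in $\Sigma^0_2$ of the code of $\M$. The technical heart of the proof is to avoid deciding membership in $B$ directly---instead maintaining coherent partial approximations inside the tree $U$ itself and extending them one level at a time using only $\M$-decidable tree conditions, so that the resulting path is genuinely $\M$-computable. Carrying out this level-by-level construction, making essential use of the enumeration of $\M$, is the crux of the argument.
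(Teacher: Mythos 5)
Your overall architecture (argue by contradiction, exhibit a $\Delta^0_1(\M)$-definable descending sequence, invoke the descending-sequence characterization of $\Delta^0_1\beta$-models) matches the paper's, but the step you yourself flag as ``the crux'' is exactly where the proof lives, and the strategy you sketch for it does not work. Checking the finitely many conditions $v_k\in T_{i_k}$ only verifies membership of a finite sequence in $U$; it gives no guarantee that your partial approximation extends to an infinite path of $U$, and computably producing a path through an ill-founded tree using only local membership tests is precisely the problem $\CNN$, which is not available here. So ``maintaining coherent partial approximations inside $U$ using only $\M$-decidable tree conditions'' cannot by itself yield a path in $\Delta^0_1(\M)$; without a further idea you would have to backtrack out of dead branches, and deciding when to backtrack is as hard as the well-foundedness questions you are trying to avoid.

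The missing idea is that your worry about $B=\{i\in|W|:\M\models\lnot\varphi(i)\}$ being only $\Sigma^0_2$ in the code of $\M$ is unfounded, because $\M$ is an \emph{effective} $\omega$-model of $\ACAo$: writing $\lnot\varphi(i)\equiv\exists X\,\lnot\theta(i,X)$ with $\theta$ arithmetical and parameter $M_a$, the witness $X$ ranges over the columns of $\M$, and for each index $b$ the arithmetical condition $\lnot\theta(i,M_b)$ is $\Sigma^0_0$ in the column $M_{f^kg(a,b)}$ for a suitable fixed $k$ (a finite iterate of the jump function applied to the join of parameter and candidate witness). Hence $i\in B$ iff $\exists b\,\theta_0(i,M_{f^kg(a,b)})$ for some $\Sigma^0_0$ formula $\theta_0$, so $B$ is $\Sigma^0_1$ in the code of $\M$ --- the existential set quantifier over $\M$ has become a number quantifier over indices. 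A nonempty $\Sigma^0_1(\M)$ subset of $|W|$ with no $<_W$-minimal element then yields a $W$-descending sequence by unbounded search through its enumeration, each search terminating, so the sequence is $\Delta^0_1(\M)$; applying the $\Delta^0_1\beta$-property directly to $W\in\M$ gives $\M\models\lnot\WO(W)$, the desired contradiction. Once this is in place your detour through the auxiliary tree $U$ and $\KB(U)$ is unnecessary, and indeed the only way I see to make that detour work is to run this same $\Sigma^0_1(\M)$ enumeration argument anyway.
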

\begin{proof}
  Let $W \in \M$ such that $\M \models \WO(W)$.
  Let $\varphi(i) \equiv \forall X \theta(i,X)$ be a $\Pi^1_1$ formula.
  We will show that $\M$ satisfies transfinite induction along $W$ on $\varphi$.
  For the sake of contradiction assume
  \begin{equation*}
    \M \models \forall i \in |W| [(\forall j<_W i \varphi(j)) \to \varphi(i)]
  \end{equation*}
  but
  \begin{equation*}
    \M \models \lnot (\forall i \in |W| \varphi(i)).
  \end{equation*}

  We may assume that $\varphi$ has exactly one set parameter $A = M_a$.
  Then, there exist a $\Sigma^0_0$ formula $\theta_0$ and $k \in \omega$ such that
  \begin{equation*}
    \{i \in |W| : \M \models \lnot \varphi(i)\} =
    \{i \in |W| : \exists b \theta_0(i,M_{f^kg(a,b)})\}
  \end{equation*}
  and hence $\{i \in |W| : \M \models \lnot \varphi(i)\}$ is $\Sigma^0_1(\M)$.
  Since $\{i \in |W| : \M \models \lnot \varphi(i)\}$ is nonempty and has no $W$-minimal element, we can find, in $\Delta^0_1(\M)$, a $W$-descending sequence from $\{i \in |W| : \M \models \lnot \varphi(i)\}$.
  Since $\M$ is a $\Delta^0_1\beta$-model, $\M \models \lnot \WO(W)$ but this is a contradiction.
\end{proof}

\section{Relativized leftmost path principle}
In the previous two sections, we introduced the $\omega$-model reflection and its variations.
It was shown that $\ATR_2^{\rfn}$ is strong enough to capture the provability from $\ATRo$ but still weaker than $\CNN$.
In this section, we seek for more concrete problems between $\ATR_2^{\rfn}$ and $\CNN$, and see
the hierarchical structure of them by using $\omega$-model reflections.

In his paper \cite{MR3145191}, Towsner introduced a hierarchy above $\ATRo$ in the context of reverse mathematics.
We rephrase his work in the context of Weihrauch degrees.

\begin{definition}
  Let $T$ be an ill-founded tree and $f,g \in [T]$. We say $f$ is lexicographically smaller than $g$, written $f <_l g$, if
  $(\exists n)(f[n] = g[n] \land f(n) < g(n))$.
\end{definition}

\begin{definition}
  Let $k \in \omega$. We define $\Delta^0_k\LPP$, the $\Delta^0_k$-relativized leftmost path principle, as follows.
  \begin{itembox}[l]{$\Delta^0_k\LPP$}
  \begin{description}
    \item[\sf Input] An ill-founded tree $T \subseteq \omega^{<\omega}$ and its path $f$.
    \item[\sf Output] A path $g$ of $T$ such that there is no $\Delta^0_k(T\oplus f \oplus g)$ path $h$ of $T$ with $h <_l g$.
  \end{description}
\end{itembox}
\end{definition}

\begin{remark}
  We note that a function is $\Delta^0_k$ definable if and only if it is $\Sigma^0_k$ definable. Thus, our $\Delta^0_k\LPP$ is the same as Towsner's $\Sigma^0_k\LPP$.
\end{remark}

\begin{remark}
  If we define $\Delta^0_k\LPP$ as follows, then it becomes equivalent to $\CNN$, so we do not adopt this definition.
  \begin{screen}
  \begin{description}
    \item[\sf Input] An ill-founded tree $T \subseteq \omega^{<\omega}$.
    \item[\sf Output] A path $g$ of $T$ such that there is no $\Delta^0_k(T\oplus g)$ path $h$ of $T$ with $h <_l g$.
  \end{description}
\end{screen}
\end{remark}

\begin{lemma}\label{A new version of LPP}
  Define $\Delta^0_k\overline{\LPP}$ as follows. Then, $\Delta^0_k\LPP$ is Weihrauch equivalent to $\Delta^0_k\overline{\LPP}$.
  \begin{itembox}[l]{$\Delta^0_k\overline{\LPP}$}
  \begin{description}
    \item[\sf Input] An ill-founded tree $T \subseteq \omega^{<\omega}$ and its path $f$.
    \item[\sf Output] A path $g$ of $T$ such that there is no $\Delta^0_k(g)$ path $h$ of $T$ with $h <_l g$.
  \end{description}
\end{itembox}
\end{lemma}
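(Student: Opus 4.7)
The plan is to handle the two inequalities separately. The direction $\Delta^0_k\overline{\LPP} \leq_W \Delta^0_k\LPP$ is essentially trivial: the identity reduction works, since any output $g$ of $\Delta^0_k\LPP(T,f)$ is automatically an output of $\Delta^0_k\overline{\LPP}(T,f)$ because $\Delta^0_k(g) \subseteq \Delta^0_k(T \oplus f \oplus g)$, so the absence of a lex-smaller $\Delta^0_k(T\oplus f\oplus g)$-path forces the absence of a lex-smaller $\Delta^0_k(g)$-path.

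For the converse $\Delta^0_k\LPP \leq_W \Delta^0_k\overline{\LPP}$, the idea is to encode $T$ and $f$ into every path of an auxiliary tree, so that the weaker guarantee ``$\Delta^0_k(g)$'' on the $\overline{\LPP}$ side is forced to carry the information of $T \oplus f$ once we decode. Given $(T,f)$, I define $T' \subseteq \omega^{<\omega}$ by declaring $\sigma \in T'$ iff there exist $h_0, \dots, h_{|\sigma|-1} \in \omega$ such that $\sigma(i) = \langle T(i), f(i), h_i\rangle$ for every $i < |\sigma|$ and $\langle h_0, \dots, h_{|\sigma|-1}\rangle \in T$, where $\langle\cdot,\cdot,\cdot\rangle$ is a fixed triple pairing that is monotone in each coordinate. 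Then $T'$ is uniformly computable from $T \oplus f$, and paths of $T'$ are in computable bijection with paths of $T$ via $h \mapsto \sigma_h$ where $\sigma_h(i) = \langle T(i), f(i), h(i)\rangle$; one has $\sigma_h \equiv_T T \oplus f \oplus h$, and by monotonicity of the pairing in the third coordinate, $\sigma_{h_1} <_l \sigma_{h_2}$ iff $h_1 <_l h_2$.

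The reduction then sends $(T,f)$ to $(T', \sigma_f)$, which is a valid $\Delta^0_k\overline{\LPP}$-input since $f \in [T]$ yields $\sigma_f \in [T']$. From an output $\sigma_g \in \Delta^0_k\overline{\LPP}(T',\sigma_f)$ we extract $g$ by decoding the third coordinate. To verify that $g$ witnesses $\Delta^0_k\LPP(T,f)$, suppose for contradiction that some path $h$ of $T$ with $h <_l g$ is $\Delta^0_k(T \oplus f \oplus g)$-definable. Since $\sigma_g \equiv_T T \oplus f \oplus g$, we have $\Delta^0_k(\sigma_g) = \Delta^0_k(T\oplus f\oplus g)$, so $\sigma_h$ is $\Delta^0_k(\sigma_g)$-definable, lies in $[T']$, and satisfies $\sigma_h <_l \sigma_g$, contradicting the defining property of $\sigma_g$.

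The only mild subtlety is choosing a pairing whose lex order on triples, once the first two coordinates are fixed, tracks the lex order on the third coordinate; any standard Cantor-style pairing does the job, so this is bookkeeping rather than a genuine obstacle. There is no deeper difficulty anticipated: the whole argument is a ``pad the input with $T \oplus f$'' trick, exploiting the fact that in $\overline{\LPP}$ the oracle $g$ is the only reference set in the leftmostness test.
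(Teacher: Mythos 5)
Your proof is correct and takes essentially the same approach as the paper: the easy direction is the identity reduction, and for the converse both arguments build an auxiliary tree each of whose paths computes $T \oplus f$, with the lexicographic order on its paths mirroring that on $[T]$, so that the $\Delta^0_k(g)$ guarantee of $\Delta^0_k\overline{\LPP}$ transfers back to a $\Delta^0_k(T\oplus f\oplus g)$ guarantee. The only difference is bookkeeping: the paper interleaves $T\oplus f$ on the odd coordinates ($\sigma \in S \leftrightarrow \sigma_\even \in T \land \sigma_\odd \prec T\oplus f$), whereas you fuse $T(i)$ and $f(i)$ into each coordinate via a pairing monotone in the last argument.
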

\begin{proof}
  It is trivial that $\Delta^0_k\overline{\LPP} \leq_W \Delta^0_k\LPP$.
  Thus, we will show that the converse direction holds.

  First, we introduce some notations.
  For $\sigma \in \omega^{<\omega}$,
  define $e_\sigma$ as the maximum even number smaller than $|\sigma|$
  and $o_\sigma$ as the maximum odd number smaller than $|\sigma|$.
  We also define
  $\sigma_{\even} = \langle \sigma(0),\ldots,\sigma(e_\sigma) \rangle,
  \sigma_{\odd} = \langle \sigma(1),\ldots,\sigma(o_\sigma) \rangle$.
  Moreover, for $f \in \omega^\omega$, define
  $f_{\even} = \langle f(2n) \rangle_n, f_{\odd} = \langle f(2n+1) \rangle_n$.
  Finally, for $f,g \in \omega^\omega$, define $f \sqcup g$ as the function such that
  $(f \sqcup g)_\even = f$ and $(f \sqcup g)_\odd = g$.

  Let $(T,f)$ be an input for $\Delta^0_k\LPP$.
  Define a tree $S$ by
  \begin{equation*}
    \sigma \in S \leftrightarrow \sigma_\even \in T \land \sigma_\odd \prec (T \oplus f).
  \end{equation*}
  Then, we have
  \begin{itemize}
    \item $\forall h \in [T](h \sqcup (T \oplus f) \in [S])$. In particular, $f \sqcup (T \oplus f)$ is a path of $S$.
    \item $\forall h \in [S](h_\even \in [T] \land h_\odd = T \oplus f)$.
          In particular, for any $g,h \in [S]$, $g \leq_l h$ if and only if $g_\even \leq_l h_\even$.
  \end{itemize}

  Now we can apply $\Delta^0_k\overline{\LPP}$ to $S$ and $f \sqcup (T \oplus f)$.
  Let $g \in [S]$ such that  $\forall h \in \Delta^0_k(g) \cap [S] (g \leq_l h)$.
  \setcounter{claimcounter}{1} 
  \begin{claim}
  $g_\even$ is an output of $\Delta^0_k\LPP(T,f)$.
  \end{claim}
  \addtocounter{claimcounter}{1}
  \begin{proof of claim}
    Let $h \in \Delta^0_k(T \oplus f \oplus g_\even) \cap [T]$.
    Then, since $T \oplus f \oplus g_\even$ is computable from $g$,
    $h \sqcup (T \oplus f) \in \Delta^0_k(g) \cap [S]$.
    Thus, $g \leq_l h \sqcup (T \oplus f)$. Hence $g_\even \leq_l (h \sqcup (T \oplus f))_\even = h$.
    This completes the proof.
    \qedclaim
  \end{proof of claim}
  Therefore, we can find an output of $\Delta^0_k\LPP$ by using $\Delta^0_k\overline{\LPP}$.
\end{proof}
By the above lemma, we sometimes identify $\Delta^0_k\LPP$ and $\Delta^0_k\overline{\LPP}$.
We note that for any $k > 0$, $\Delta^0_k\LPP$ has the following arithmetical representation $(\theta,\eta)$:
\begin{align*}
  &\theta(T,f) \equiv T \text{ is a tree } \land f \in [T], \\
  &\eta(T,f,g) \equiv g \in [T] \land \forall h \leq_T (T \oplus f \oplus g)^{(k-1)} (h \in [T] \to g \leq_l h).
\end{align*}
Similarly, $\Delta^0_k\overline{\LPP}$ has an arithmetical representation.
In the following argument, we fix these representations.
As noted in Lemma \ref{A new version of LPP}, we may assume that
$\L_2(\Delta^0_k\LPP)$ is equivalent to $\L_2(\Delta^0_k\overline{\LPP})$ over $\ACAo$ under these representations.

We will show that $\LPP$s form a strictly increasing hierarchy between $\ATR_2^{\rfn}$ and $\CNN$.

\begin{lemma}\label{Delta0nbeta model is a model of LPP}
  Let $n > 0$ and $\M$ be a $\Delta^0_{n+2} \beta$-model.
  Then $\M \models \L_2(\Delta^0_n\LPP)$.
\end{lemma}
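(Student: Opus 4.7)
My plan is to reduce to $\Delta^0_n\overline{\LPP}$ via Lemma~\ref{A new version of LPP} and then, inside $\M$, build a subtree $U\subseteq T$ whose infinite paths are all $\overline{\LPP}$-solutions; the $\Delta^0_{n+2}\beta$-property will then be invoked to extract a $\Delta^0_{n+2}(\M)$-path of $U$, which lies in $\M$.

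First I would define $U$ in $\M$ as follows: $\sigma\in U$ iff $\sigma\in T$ and no triple $(e,k,\tau)$ with $e,k<|\sigma|$ simultaneously satisfies that the $|\sigma|$-step simulation of $\{e\}$ using the approximation $\sigma^{(n-1)}\restriction|\sigma|$ outputs a string $\tau$ of length $\geq k+1$ with $\tau\restriction k=\sigma\restriction k$, $\tau(k)<\sigma(k)$, and such that $\tau\restriction(k+1)$ has an arithmetically certifiable extension in $T$ --- concretely, a $\Delta^0_{n+2}(T\oplus f)$-extension, which is a $\Sigma^0_{n+4}(T\oplus f)$-condition and hence arithmetical. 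Then $U\in\M$ since $\M\models\ACAo$.

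Next I would verify that every $g\in[U]$ is an $\overline{\LPP}$-solution. Suppose for contradiction $h=\{e\}^{g^{(n-1)}}$ is a path of $T$ with $h<_l g$, and let $k_0$ be the least disagreement position. For sufficiently large $K$, the approximation $g[K]^{(n-1)}\restriction K$ agrees with $g^{(n-1)}$ on the finitely many bits used to compute $h(k_0)$, so the simulation yields $\tau=h\restriction(k_0+1)$; since $h$ itself extends $\tau$ and is $\Delta^0_n(g)\subseteq\Delta^0_{n+2}(T\oplus f\oplus g)$-computable, one checks (using $\beta$ to pass between $\Delta^0_{n+2}(T\oplus f)$-extendibility and real extendibility on elements of $\M$) that $\tau\restriction(k_0+1)$ meets the surrogate, contradicting $g[K]\in U$. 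Conversely, the true leftmost path $g^*$ of $T$ lies in $[U]$ because any rejection of $g^*[K]$ would produce a genuinely extendible $\tau\restriction(k+1)\in T$ strictly lex-below $g^*$, contradicting minimality.

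Finally I would extract $g$ via the $\beta$-property. Since the matrix defining $[U]$ is $\Pi^0_2$ in $T\oplus f$, the statement $\exists g (g\in[U])$ is $\Sigma^1_1$ with $\Pi^0_2$-matrix, so by the $\Delta^0_{n+2}\beta$-characterization of $\M$ (via $\KB(U)$ being ill-founded in $\M$ iff a $\Delta^0_{n+2}(\M)$-descending sequence exists in reality), it suffices to exhibit some $\Delta^0_{n+2}(\M)$-path of $U$. The $\Delta^0_{n+2}(\M)$-path of $T$ guaranteed by $f\in[T]\in\M$ together with the $\beta$-property can be pushed leftward within $U$ by invoking the surrogate extendibility predicate (which is arithmetical in $T\oplus f$ and hence available in $\M$) to obtain such a path $g\in\Delta^0_{n+2}(\M)\subseteq\M$. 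By Step 2, this $g$ witnesses $\overline{\LPP}$ for $(T,f)$.

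The main obstacle is Step 3: ensuring the $\Pi^0_{n+2}$-class of $\overline{\LPP}$-solutions intersects $\Delta^0_{n+2}(\M)$. The key is calibrating $U$'s arithmetical surrogate carefully so that (i) enough candidate paths survive for $\M$ to see $U$ as ill-founded under its $\Delta^0_{n+2}\beta$-property, while (ii) the defining matrix of $[U]$ is genuinely $\Pi^0_2$ in $(T\oplus f)^{(n+1)}$ so that the $\beta$-property applies directly to $\KB(U)$; I expect the choice of $\Delta^0_{n+2}(T\oplus f)$-extendibility as the surrogate achieves both, with the verification resting essentially on the equivalence given by $\beta$ between $\M$'s view and reality for extendibility questions about nodes of $T$.
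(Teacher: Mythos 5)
There is a genuine gap, and it sits exactly where you flag the ``main obstacle.'' Your plan hinges on defining a subtree $U$ of $T$ by an arithmetical condition in $T\oplus f$ alone and then verifying that \emph{every} path of $U$ is a solution. Two things break. First, the finite-stage simulation of $\{e\}^{g^{(n-1)}}$ from initial segments $g[K]$ is not sound: for $n\geq 2$ the oracle $g^{(n-1)}$ involves iterated jumps, which are not monotonically approximable from $g[K]$ (negative oracle answers can be injured later), so the claim that for large $K$ the simulation stabilizes to $\tau = h[k_0+1]$ fails, and with it the argument that $g[K]\notin U$. Second, and more fundamentally, the complexity bookkeeping does not close. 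The path you extract via the $\Delta^0_{n+2}\beta$-property is only guaranteed to lie in $\Delta^0_{n+2}(\M)$, so a competitor $h\in\Delta^0_n(g)\cap[T]$ may only be $\Delta^0_{2n+1}(\M)$; such an $h$ need not admit any $\Delta^0_{n+2}(T\oplus f)$-certified extension, and the $\Delta^0_{n+2}\beta$-property cannot see it either. So the surrogate extendibility predicate in the definition of $U$ systematically undercounts the competitors that the $\LPP$-condition requires you to beat, and Step 2 collapses. (A smaller, patchable point: the matrix of ``$g\in[U]$'' is $\Pi^0_2$ only relative to an appropriate jump of $T\oplus f$, not to $T\oplus f$ itself, since $U$ is several quantifier levels up; this is fine because $\M$ is jump-closed, but as written the appeal to the $\beta$-property is off by those levels.)

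The paper's proof avoids both problems by \emph{not} trying to define the solution set by a condition in $T\oplus f$. It forms $S=\{\sigma\in T:\M\models\exists f\in[T]\,(\sigma\prec f)\}$, which by the $\Delta^0_{n+2}\beta$-property equals the set of nodes extendible by a $\Delta^0_{n+2}(\M)$-path; $S$ is a nonempty pruned tree in $\Sigma^0_1(\M)$, so its \emph{leftmost} path $g$ is computed outright at complexity $\Delta^0_2(\M)$. That low complexity is the whole point: any $h\in[T]\cap\Delta^0_n(g)$ then lies in $\Delta^0_{n+1}(\M)\subseteq\Delta^0_{n+2}(\M)$, hence lies in $[S]$, hence is $\geq_l g$. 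The $\beta$-property is invoked only at the very end, to reflect the $\Pi^0_2$ statement about $g\oplus g'\oplus\cdots\oplus g^{(n)}$ into $\M$. If you want to repair your argument, you should replace the arithmetical surrogate $U$ by this model-relative pruned tree and take its leftmost path directly rather than extracting a path by reflection.
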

\begin{proof}
  Let $\M$ be a $\Delta^0_{n+2} \beta$-model.
  It is enough to show that for any tree $T \in \M$, if
  $\M \models [T] \neq \varnothing$, then
  \begin{equation*}
    \M \models \exists g \in [T] \forall f \in [T] \cap \Delta^0_n(g) (g \leq_l f).
  \end{equation*}

  Define $S = \{\sigma \in T : \M \models \exists f \in [T]( \sigma \prec f)\}$.
  Then $S \in \Sigma^0_1(\M)$ and $S =\{\sigma \in T : \exists f \in [T] \cap \Delta^0_{n+2}(\M) (\sigma \prec f)\}$.
  Since $S$ is a nonempty pruned tree, $S$ has a leftmost path $g$ which is computable from $S$. In particular, $g \in \Delta^0_2(\M)$.


  We point out that if $f \in [T] \cap \Delta^0_n(g)$, then $g \leq_l f$.
  Assume $f \in [T] \cap \Delta^0_n(g)$. Since $g \in \Delta^0_2(\M)$,
  $f \in \Delta^0_{n+1}(\M)$.
  Thus $f \in [S]$ by definition and thus $g \leq_l f$.
  Since $f \in \Delta^0_n(g)$ is equivalent to $f$ is computable from $g^{(n-1)}$,
  we have
  \begin{align*}
    \exists (g_0 \oplus g_1 \oplus \cdots \oplus g_{n}) \in \Delta^0_{n+2}(\M) [
    &\bigwedge_{i < n} g_{i+1} = (g_i)'\ \land \\
    &g_0 \text{ is a function} \land g_0 \in [T] \ \land \\
    &(\forall X \leq_T g_{n-1}) ((X \text{ is a function} \land X \in [T]) \to g_0 \leq_l X)].
  \end{align*}
  Here, $(g_i)'$ denotes the Turing jump of $g_i$.
  We see that the condition in the square bracket is $\Pi^0_2$ in $(g_0 \oplus g_1 \oplus \cdots \oplus g_{n} )$.
  Since $g_{i+1} = (g_i)'$ is a $\Delta^0_2$ condition, `$g_0$ is a function' is $\Pi^0_2$ and $g_0 \in [T]$ is $\Pi^0_1$,
  it is enough to see that the last clause is $\Pi^0_2$.
  Let $U(e,x,X)$ be a $\Sigma^0_1$ universal formula.
  Put $\theta(X) \equiv (X \text{ is a function} \land X \in [T]) \to g_0 \leq_l X$.
  Since $\theta(X)$ is a $\Sigma^0_2$ formula, there is a $\Sigma^0_2$ formula $\theta'(e,e',V)$ such that
  \begin{align*}
    \forall e,e',Y \Bigl( &\bigl( \forall x (U(e,x,Y) \leftrightarrow \lnot U(e',x,Y)) \bigr) \\
    \to &\bigl( \forall X(X = \{x : U(e,x,Y)\} \to (\theta(X) \leftrightarrow \theta'(e,e',Y)) \bigr) \Bigr).
  \end{align*}
  Then, the last clause can be written as
  \begin{align*}
    \forall e,e' \Bigl(
      \bigl(
        \forall x (U(e,x,g_{n-1}) \leftrightarrow \lnot U(e',x,g_{n-1}))
      \bigr)
    \to \theta'(e,e',g_{n-1})
    \Bigr).
  \end{align*}
  Since this condition is $\Pi^0_3$ in $g_{n-1}$ and $g_{n} = (g_{n-1})'$, this is $\Pi^0_2$ in $g_n$.
  Since $\M$ is a $\Delta^0_{n+2}\beta$-model, $\M$ satisfies
  \begin{align*}
      \exists (g_0 \oplus g_1 \oplus \cdots \oplus g_{n})  [
      &\bigwedge_{i < n} g_{i+1} = (g_i)'\ \land \\
      &g_0 \text{ is a function} \land g_0 \in [T] \ \land \\
      &(\forall X \leq_T g_{n-1}) ((X \text{ is a function} \land X \in [T]) \to g_0 \leq_l X)].
  \end{align*}
  Therefore,
  \begin{equation*}
    \M \models \exists g  [g \in [T] \land (\forall f \in [T] \cap \Delta^0_n(g)) (g \leq_l f)].
  \end{equation*}
  This completes the proof.
\end{proof}

\begin{definition}
  Let $n \in \omega$. We define the $\Delta^0_n\beta$-model reflection as follows:
  \begin{itembox}[l]{$\Delta^0_n\beta$-model reflection}
  \begin{description}
    \item [\sf Input] Any set $X$.
    \item [\sf Output] A tuple $\M = (\langle M_i \rangle_i,f,g,e)$ such that
    $(\langle M_i \rangle_i,f,g) \text{ is a  $\Delta^0_n\beta$-model}$ and $M_e = X$.
  \end{description}
\end{itembox}
\end{definition}

\begin{lemma}[{\cite[Theorem 4.3]{MR3145191}}]\label{LPP to TI}
  Let $n \in \omega$.
  Then the $\Delta^0_n\beta$-model reflection is Weihrauch reducible to $\Delta^0_n\LPP$.
\end{lemma}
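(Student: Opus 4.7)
My plan is to construct, uniformly computably in the input $X$, a tree $T \subseteq \omega^{<\omega}$ with a distinguished path $f \in [T]$ such that every path $g \in [T]$ codes an effective $\omega$-model of $\ACAo$ of the form $\M_g = (\langle M_i \rangle_i, F, G, e)$ with $M_e = X$. Applying $\Delta^0_n\LPP$ to $(T,f)$ then yields a leftmost path $g^*$, from which I read off $\M = \M_{g^*}$. By Lemma \ref{A new version of LPP} I may equivalently work with $\Delta^0_n\overline{\LPP}$, so I will assume that no $\Delta^0_n(g^*)$-definable path of $T$ is lexicographically smaller than $g^*$.

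In the tree design, a node at level $k$ commits to: longer initial segments of $M_i$ for small $i$; more values of the indexing functions $F$ (for jumps) and $G$ (for sums); and, crucially, extra guess coordinates that predict, for each index $i$, whether $M_i$ is a well-order of $\omega$ and, if not, the first few values of an associated descending sequence. ``Ill-founded with witness $\sigma$'' is encoded by a smaller value than ``well-founded'' at the corresponding coordinate. All compatibility conditions between the finite data, including closure under $\leq_T$, jumps, and sums within the visible portion, are $\Pi^0_1$, so $T$ is a computable tree. For the starter path $f$, I take a canonical enumeration of the Turing iterates of $X$ with default guesses of ``well-founded'' wherever no ill-foundedness witness is visible from $X$.

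To conclude that $\M$ is a $\Delta^0_n\beta$-model, I use the linear-order characterization established just before this lemma. Suppose $L = M_i \in \M$ admits a descending sequence $d \in \Delta^0_n(g^*)$, but no $M_j$ is such a sequence, so the guess coordinate for $L$ in $g^*$ reads ``well-founded''. Then the path $g^{**}$ obtained from $g^*$ by keeping all earlier coordinates fixed, switching this single guess to ``ill-founded with witness $d$'', inserting $d$ into the appropriate slot, and adjusting downstream commitments, is still a path of $T$ by the tree design; it is computable from $g^* \oplus d$ and hence lies in $\Delta^0_n(g^*)$; and it is lex-smaller than $g^*$, contradicting the leftmost property.

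The main obstacle is arranging the tree so that this single-coordinate switch really does produce a genuine path of $T$: the guess coordinates must appear before any commitment that depends on the well-foundedness of $M_i$, and the change from ``well-founded'' to ``ill-founded with witness $d$'' must be locally compatible with every other already-committed piece of data. Once this bookkeeping is handled correctly, the verification that $\M$ is an effective $\omega$-model of $\ACAo$ and that $M_e = X$ is immediate from the tree's definition, and the $\Delta^0_n\beta$-property follows from the contradiction just sketched.
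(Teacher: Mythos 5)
Your construction is essentially the same as the one the paper invokes: the paper's proof of this lemma consists of a single sentence deferring to the construction of Theorem 4.3 of Towsner \cite{MR3145191}, which is exactly the tree-of-coded-models with lex-ordered well-foundedness guesses (ill-founded with witness coded smaller than well-founded) that you describe, with the leftmost-path property forcing the guesses to be correct up to $\Delta^0_n$ relative to the path. Your outline, including the reduction to $\Delta^0_n\overline{\LPP}$ and the single-coordinate switch argument, is correct, and the bookkeeping you defer is the same bookkeeping the paper defers to the cited reference.
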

\begin{proof}
  We can use the same construction in the proof of Theorem 4.3. of \cite{MR3145191}.
\end{proof}

\begin{lemma}[{\cite[Theorem 4.2]{MR3145191}}]\label{lpp and atr}
  Over $\RCAo$, $\L_2(\Sigma^0_0\LPP)$ implies $\ATRo$.
\end{lemma}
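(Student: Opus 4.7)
The plan is to deduce $\ATRo$ from $\L_2(\Sigma^0_0\LPP)$ over $\RCAo$ in two stages: first establish $\ACAo$, then, working in $\RCAo + \ACAo + \Sigma^0_0\LPP$, produce a jump hierarchy along each well-order. This follows the blueprint of Towsner \cite{MR3145191}.

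For $\ACAo$, given a set $X$ I would form the $X$-computable tree $T_X \subseteq 2^{<\omega}$ of strings $\sigma$ such that, for every $e < |\sigma|$, $\sigma(e) = 0$ implies $\varphi_e^X(e)$ has not halted within $|\sigma|$ steps. The constant sequence $1^\omega$ is trivially a path (the constraint only restricts $0$-values), so $(T_X, 1^\omega)$ is a legitimate input to $\Sigma^0_0\LPP$, giving a leftmost path $g$. If $\varphi_e^X(e)\downarrow$ then the tree constraint forces $g(e)=1$; if $\varphi_e^X(e)\uparrow$, the sequence obtained from $g$ by flipping its $e$-th bit to $0$ is still a path of $T_X$ and is $(X \oplus g)$-computable, and would lie strictly to the left of $g$ if $g(e)=1$, contradicting leftmostness. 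Hence $g = \chi_{X'}$, so $X'$ exists, giving $\ACAo$.

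For $\ATRo$, given a well-order $W$ and a set $A$, I would construct a computable tree $T_{W,A} \subseteq 2^{<\omega}$ whose strings code partial assignments at positions $\langle i,e\rangle$ with $i \in |W|$, subject to: (a) if $i = \min_W$, the value at $(i,e)$ equals $\chi_A(e)$; (b) if $i >_W \min_W$, the value at $(i,e)$ is $0$ only when $\varphi_e^{\sigma_{<_W i}}(e)$ fails to halt in $|\sigma|$ steps, where $\sigma_{<_W i}$ is the part of $\sigma$ lying in $W$-earlier slices. Taking the canonical padding (matching $A$ on the $\min_W$-slice and $1$'s elsewhere) supplies an initial path, so $\Sigma^0_0\LPP$ yields a leftmost path $g$. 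I would then argue that $g$ is exactly the jump hierarchy along $W$ from $A$: the set $D = \{i \in |W| : g_i \neq (g_{<_W i})'\}$ is arithmetical in $g \oplus W \oplus A$, so by $\ACAo$ it exists, and if it were nonempty then well-foundedness of $W$ would give it a $W$-least element $i_0$. At $i_0$, the true jump value $H^* = (g_{<_W i_0})'$ exists by $\ACAo$, and from it I would extract a strictly lex-smaller path computable from $g \oplus W \oplus A$, contradicting the leftmostness of $g$.

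The hardest part will be producing that lex-smaller path. Naively replacing only slice $i_0$ of $g$ by $H^*$ fails because the tree constraint at slices $j >_W i_0$ uses the oracle $\sigma_{<_W j}$, which contains the modified slice $i_0$; a shrunk oracle can make previously non-halting computations halt and thus break the path property. The resolution is either to choose the tree encoding so that deviations at one slice do not cascade (for example, by letting paths carry explicit certificates of halting/non-halting at each level, so the modification at $i_0$ automatically fixes all dependent constraints above it), or to simultaneously rebuild all slices $>_W i_0$ uniformly from $g \oplus W \oplus A$ while ensuring the first $\omega$-position of disagreement under the pairing $\langle i,e\rangle$ falls inside slice $i_0$ and favors the modification. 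A subsidiary obstacle is that $i_0$ is merely arithmetical in $g \oplus W \oplus A$, not Turing-computable from it, so the tree must be arranged so that the required modification does not need $i_0$ as an explicit oracle. These are exactly the points that Towsner's argument in \cite{MR3145191} handles; I would adapt his construction to fit the present representation of $\Sigma^0_0\LPP$.
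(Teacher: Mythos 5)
The paper does not prove this lemma at all --- it is imported directly from Towsner \cite[Theorem 4.2]{MR3145191} --- so your attempt can only be judged on its own correctness. Your first stage is fine: $T_X$ is a tree, $1^\omega$ is a path, and a false $1$ at a divergent index can be flipped to $0$ by a finite modification of $g$, which is certainly $\Delta^0_0(T_X\oplus 1^\omega\oplus g)$; hence the $\Sigma^0_0$-leftmost path is $\chi_{X'}$ and $\ACAo$ follows.

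The second stage, however, stops exactly where the proof has to happen, and the difficulty you flag is not a detail but the whole content of the lemma. Let $g$ be the output path, $i_0$ the $W$-least slice with $g_{i_0}\supsetneq (g_{<_W i_0})'$, and $H^*=(g_{<_W i_0})'$. Every candidate correction available in your tree fails. Replacing slice $i_0$ by $H^*$ (or even flipping a single false bit there) and keeping $g$ elsewhere need not yield a path: the legality of each $0$ at a position $\langle j,e\rangle$ with $j>_W i_0$ was certified against the oracle $g_{<_W j}$, which has now changed, and Turing computations are not monotone in the oracle. Repairing this by setting every slice $>_W i_0$ to constant $1$ does yield a path, and one that is simply definable from $g$ with number parameter $i_0$, but it need not be lexicographically left of $g$: the lex order compares positions in the $\omega$-order of the codes $\langle j,e\rangle$, so the first disagreement may fall at a code with $j>_W i_0$ where $g$ has a $0$ and the repaired path has a $1$. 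Rebuilding the slices above $i_0$ ``correctly'' presupposes the transfinite recursion being proved, and in any case such a path would not be $\Delta^0_0$- (nor $\Delta^0_1$-) definable from $T\oplus f\oplus g$, which is what $\Sigma^0_0\LPP$ demands; note also that $H^*$ itself is only $\Sigma^0_1(g\oplus W)$. So the missing idea is a different tree design in which an error at the $W$-least bad slice can be retracted by a modification that is simultaneously still a path, left of $g$ at the first $\omega$-position of disagreement, and $\Delta^0_0$-definable from $g$. Your ``explicit certificates'' remark points in the right direction but is not carried out; as written, the decisive step is deferred to the very citation the lemma carries, so the proposal is not a self-contained proof.
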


\begin{theorem}
  $(\ATR_2 + \Pi^1_1\myhyphen\TI)^{\rfn}$ is Weihrauch reducible to $\Delta^0_3\LPP$.
\end{theorem}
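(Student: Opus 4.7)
The plan is to combine Lemma~\ref{LPP to TI} with the characterizations of $\Delta^0_3\beta$-models given in Lemmas~\ref{Delta0nbeta model is a model of LPP} and \ref{Delta0nbeta model is a model of Pi11ti}. By Lemma~\ref{LPP to TI} (applied at $n = 3$), the $\Delta^0_3\beta$-model reflection is Weihrauch reducible to $\Delta^0_3\LPP$. It therefore suffices to show
\begin{equation*}
    (\ATR_2 + \Pi^1_1\myhyphen\TI)^{\rfn} \leq_W \Delta^0_3\beta\text{-model reflection},
\end{equation*}
and I would realize this by the identity reduction: on input $X$, feed $X$ to the $\Delta^0_3\beta$-model reflection and return the resulting tuple $\M = (\langle M_i\rangle_i, f, g, e)$ unchanged as an output of $(\ATR_2 + \Pi^1_1\myhyphen\TI)^{\rfn}(X)$.

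The substantive content is the verification that such an $\M$ already satisfies all four clauses in the definition of $(\ATR_2 + \Pi^1_1\myhyphen\TI)^{\rfn}(X)$. The first two, that $(\langle M_i\rangle_i, f, g)$ is an effective $\omega$-model of $\ACAo$ and that $M_e = X$, are built into the definition of the $\Delta^0_3\beta$-model reflection. For $\M \models \Pi^1_1\myhyphen\TI$, I would note that every $\Delta^0_3\beta$-model is also a $\Delta^0_1\beta$-model (since a $\Delta^0_1(\M)$ witness is \emph{a fortiori} a $\Delta^0_3(\M)$ witness, so the absoluteness property specializes) and then apply Lemma~\ref{Delta0nbeta model is a model of Pi11ti}. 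For the $\ATR_2$-reflection clause, I would apply Lemma~\ref{Delta0nbeta model is a model of LPP} with $n = 1$ to obtain $\M \models \L_2(\Delta^0_1\LPP)$; raising $k$ in $\Delta^0_k\LPP$ only tightens the requirement on outputs, so this implies $\M \models \L_2(\Sigma^0_0\LPP)$, and then Lemma~\ref{lpp and atr} gives $\M \models \ATRo$, hence $\M \models \L_2(\ATR_2)$. Since $\ATR_2$ is arithmetically representable, Remark~\ref{If P is arithmetical, then the last condition is equiv} converts this $\omega$-model satisfaction into the Weihrauch-style statement
\begin{equation*}
    \forall i\,(M_i \in \dom \ATR_2 \to \exists j\,(M_j \in \ATR_2(M_i))).
\end{equation*}

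I do not anticipate any serious obstacle: the proof is a chain of citations once one has identified the right lemmas. The only step worth flagging is the implication $\L_2(\Delta^0_1\LPP) \to \L_2(\Sigma^0_0\LPP)$, which holds because any path $g$ avoiding every $\Delta^0_1$-in-$(T \oplus f \oplus g)$ leftward competitor also avoids every computable-in-$(T \oplus f \oplus g)$ leftward competitor, so any solution of $\Delta^0_1\LPP$ is automatically a solution of $\Sigma^0_0\LPP$.
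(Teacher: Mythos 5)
Your proposal is correct and follows essentially the same route as the paper: both reduce $(\ATR_2 + \Pi^1_1\myhyphen\TI)^{\rfn}$ to the $\Delta^0_3\beta$-model reflection (via Lemma \ref{Delta0nbeta model is a model of LPP} at $n=1$, Lemma \ref{Delta0nbeta model is a model of Pi11ti}, and the fact that $\L_2(\Delta^0_1\LPP)$ implies $\L_2(\ATR_2)$ over $\ACAo$) and then invoke Lemma \ref{LPP to TI}. The only cosmetic difference is that the paper routes the last step through the intermediate problem $(\Delta^0_1\LPP + \Pi^1_1\myhyphen\TI)^{\rfn}$ and Theorem \ref{P-ref and provability}, whereas you verify the output conditions directly; your explicit justification of $\L_2(\Delta^0_1\LPP) \to \L_2(\Sigma^0_0\LPP)$ is a welcome extra detail.
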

\begin{proof}
  By Lemma \ref{Delta0nbeta model is a model of Pi11ti}, \ref{Delta0nbeta model is a model of LPP} and \ref{LPP to TI}, we have
  $(\Delta^0_1\LPP + \Pi^1_1\myhyphen\TI)^{\rfn} \leq_W \Delta^0_3\LPP$.
  Since $\L_2(\Delta^0_1\LPP)$ implies $\L_2(\ATR_2)$ over $\ACAo$, we also have $(\ATR_2 + \Pi^1_1\myhyphen\TI)^{\rfn} \leq_W \Delta^0_3\LPP$ (see Theorem \ref{P-ref and provability}).
\end{proof}

\begin{corollary}
  For any $n \in \omega$,
  \begin{enumerate}
    \item $\ATR_2^{(n \mathchar`- \rfn)} <_W \Delta^0_3\LPP$,
    \item $\Delta^0_3\LPP \not \leq_{W}^a \ATR_2^{(n \mathchar`- \rfn)}$.
  \end{enumerate}
\end{corollary}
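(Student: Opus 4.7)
The plan is to derive both claims quickly from the immediately preceding theorem, Corollary \ref{rfn iteration TI}, and Corollary \ref{rfn is enough strong}, using the fact that $\ATR_2$ (and hence every iterated reflection $\ATR_2^{(n\mathchar`-\rfn)}$) is arithmetically representable.

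For (1), I would instantiate Corollary \ref{rfn iteration TI} with $\P = \ATR_2$ to obtain the strict inequality
\begin{equation*}
\ATR_2^{(n\mathchar`-\rfn)} <_W (\ATR_2^{(n\mathchar`-\rfn)} + \Pi^1_1\myhyphen\TI)^{\rfn} \equiv_W (\ATR_2 + \Pi^1_1\myhyphen\TI)^{\rfn}.
\end{equation*}
Chaining this with the previous theorem $(\ATR_2 + \Pi^1_1\myhyphen\TI)^{\rfn} \leq_W \Delta^0_3\LPP$ immediately gives $\ATR_2^{(n\mathchar`-\rfn)} <_W \Delta^0_3\LPP$.

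For (2), I would argue by contradiction. Suppose for some fixed $n$ that $\Delta^0_3\LPP \leq_W^a \ATR_2^{(n\mathchar`-\rfn)}$. Applying (1) with $n$ replaced by $n+1$ yields $\ATR_2^{((n+1)\mathchar`-\rfn)} <_W \Delta^0_3\LPP$, which in particular gives $\ATR_2^{((n+1)\mathchar`-\rfn)} \leq_W^a \Delta^0_3\LPP$. Composing the two arithmetic reductions via transitivity of $\leq_W^a$ produces $\ATR_2^{((n+1)\mathchar`-\rfn)} \leq_W^a \ATR_2^{(n\mathchar`-\rfn)}$. Since $\ATR_2^{(n\mathchar`-\rfn)}$ is arithmetically representable (an easy induction on $n$: the arithmetic representation of $\P^{\rfn}$ constructed in the proof of Theorem \ref{Thm of omega-model incomp in Weih} is uniform in that of $\P$), Corollary \ref{rfn is enough strong} forbids precisely this reduction, giving the contradiction.

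The only step that requires real thought is confirming that each $\ATR_2^{(n\mathchar`-\rfn)}$ is arithmetically representable so that Corollary \ref{rfn is enough strong} applies at the final step, but this is immediate from the explicit arithmetic representation of the reflection operator written down in the proof of Theorem \ref{Thm of omega-model incomp in Weih}. There is no genuine obstacle here; the corollary is essentially a bookkeeping consequence of the machinery already built.
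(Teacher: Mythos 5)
Your proof is correct and follows essentially the same route as the paper: part (1) chains Corollary \ref{rfn iteration TI} with the preceding theorem, and part (2) derives the contradiction from $\ATR_2^{((n+1)\mathchar`-\rfn)} \not\leq_W^a \ATR_2^{(n\mathchar`-\rfn)}$, which is exactly the paper's argument. Your explicit check that each iterated reflection remains arithmetically representable is a point the paper also makes (in the remark before Corollary \ref{rfn iteration TI}), so nothing is missing.
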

\begin{proof}
  By Lemma \ref{rfn iteration TI} and the previous proof, we have
  \begin{equation*}
    \ATR_2^{(n \mathchar`- \rfn)} <_W (\ATR_2 + \Pi^1_1\myhyphen\TI)^{\rfn} \leq_W (\Delta^0_1\LPP + \Pi^1_1\myhyphen\TI)^{\rfn} \leq_W \Delta^0_3 \LPP.
  \end{equation*}
  Since $\ATR_2^{(n +1 \mathchar`- \rfn)} \not \leq_{W}^a \ATR_2^{(n \mathchar`- \rfn)}$, we have $\Delta^0_3\LPP \not \leq_{W}^a \ATR_2^{(n \mathchar`- \rfn)}$.
\end{proof}

By Lemma \ref{Delta0nbeta model is a model of Pi11ti}, \ref{Delta0nbeta model is a model of LPP} and \ref{LPP to TI}, we also have the following theorem.
\begin{theorem}
  $(\Delta^0_n\LPP + \Pi^1_1\myhyphen\TI)^{\rfn}$ is Weihrauch reducible to $\Delta^0_{n+2}\LPP$.
  Therefore, for any $k \in \omega$,
  \begin{enumerate}
    \item $(\Delta^0_n\LPP)^{(k \mathchar`- \rfn)} <_W \Delta^0_{n+2}\LPP$,
    \item $\Delta^0_{n+2}\LPP \not \leq_{W}^a (\Delta^0_n\LPP)^{(k \mathchar`- \rfn)}$.
  \end{enumerate}
\end{theorem}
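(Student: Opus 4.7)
The plan is to mimic the proof of the preceding theorem (the case corresponding to $n=1$, $n+2=3$) by substituting general $n$ and $n+2$ throughout. The core of the main reduction $(\Delta^0_n\LPP + \Pi^1_1\myhyphen\TI)^{\rfn} \leq_W \Delta^0_{n+2}\LPP$ is a direct chaining of the three lemmas about $\Delta^0_k\beta$-models established in the previous section.

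Given an input $X$, I would first apply Lemma \ref{LPP to TI} at level $n+2$ to obtain, from a single oracle call to $\Delta^0_{n+2}\LPP$, an effective $\omega$-model $\M = (\langle M_i \rangle_i, f, g, e)$ that is a $\Delta^0_{n+2}\beta$-model with $M_e = X$. Since $\Delta^0_1(\M) \subseteq \Delta^0_{n+2}(\M)$, this $\M$ is also a $\Delta^0_1\beta$-model, so $\M \models \Pi^1_1\myhyphen\TI$ by Lemma \ref{Delta0nbeta model is a model of Pi11ti}. Lemma \ref{Delta0nbeta model is a model of LPP} (invoked with parameter $n$) then gives $\M \models \L_2(\Delta^0_n\LPP)$, which by Remark \ref{If P is arithmetical, then the last condition is equiv} unpacks to the witnessing clause $\forall i(M_i \in \dom(\Delta^0_n\LPP) \to \exists j (M_j \in \Delta^0_n\LPP(M_i)))$ required in the definition of $(\Delta^0_n\LPP + \Pi^1_1\myhyphen\TI)^{\rfn}$. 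Hence $\M$ is a valid output.

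For the numbered consequences: item (1) follows by combining the main reduction with Corollary \ref{rfn iteration TI}, which supplies the strict gap $(\Delta^0_n\LPP)^{(k\mathchar`-\rfn)} <_W (\Delta^0_n\LPP + \Pi^1_1\myhyphen\TI)^{\rfn}$. For item (2), I would apply item (1) at level $k+1$ to get $(\Delta^0_n\LPP)^{((k+1)\mathchar`-\rfn)} \leq_W \Delta^0_{n+2}\LPP$, and then combine with Corollary \ref{rfn is enough strong} (which gives $(\Delta^0_n\LPP)^{((k+1)\mathchar`-\rfn)} \not\leq_W^a (\Delta^0_n\LPP)^{(k\mathchar`-\rfn)}$) via transitivity of $\leq_W^a$ to conclude $\Delta^0_{n+2}\LPP \not\leq_W^a (\Delta^0_n\LPP)^{(k\mathchar`-\rfn)}$.

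There is no real obstacle here: the entire argument is a routine generalization of the preceding theorem followed by a bookkeeping application of the iterated-reflection machinery. The only point deserving explicit mention is the verification that a $\Delta^0_{n+2}\beta$-model really is a $\Delta^0_1\beta$-model, which is immediate from the inclusion of the corresponding definability classes on the existential side of the $\beta$-model equivalence.
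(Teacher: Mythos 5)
Your proof is correct and follows essentially the same route as the paper, which derives the main reduction by exactly the chain you describe (Lemma \ref{LPP to TI} at level $n+2$, then Lemmas \ref{Delta0nbeta model is a model of Pi11ti} and \ref{Delta0nbeta model is a model of LPP}) and obtains items (1) and (2) from Corollaries \ref{rfn iteration TI} and \ref{rfn is enough strong} just as in the corresponding $\ATR_2$ corollary. Your explicit check that a $\Delta^0_{n+2}\beta$-model is a $\Delta^0_1\beta$-model is the one detail the paper leaves implicit, and it is verified correctly.
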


\begin{remark}
  Any arguments in this section can be verified in $\ACAo$.
  Thus, we also have a precise analysis for the hierarchy of $\Sigma^0_k\LPP$s in second-order arithmetic.
\end{remark}

\section{Relativized least fixed point}
We have studied two types of Weihrauch problems, $\FP$ and $\LPP$.
In this section, we see that $\LPP$ can be regarded as a variant of $\FP$.

First, we define an extension of $\FP$ and its variants.

\begin{definition}
    Let $i,j \in \omega$. We define $\Sigma^0_i\LFP(\Sigma^0_j)$,
    the $\Sigma^0_i$-relativized least fixed point theorem for $\Sigma^0_j$ operators, as follows.
    \begin{itembox}[l]{$\Sigma^0_i\LFP(\Sigma^0_j)$}
    \begin{description}
      \item[\sf Input] A set $A$ and a Gödel number $\GN{\varphi}$ of an $X$-positive formula $\varphi(n,X,Y) \in \Sigma^0_j$. (The same as $\FP(\Sigma^0_j)$).
      \item[\sf Output] A fixed point $X$ of the operator such that there is no $\Sigma^0_i(A \oplus X)$ definable fixed point $Y$ with $Y \subsetneq X$.
    \end{description}
  \end{itembox}
\end{definition}

\begin{definition}
  Let $\Gamma$ be an operator on $\mathcal{P}(\omega)$. We say a set $X$ is a closed point of $\Gamma$ if $\Gamma(X) \subseteq X$.
\end{definition}
\begin{definition}
    Let $i,j \in \omega$. We define $\Sigma^0_i\LCP(\Sigma^0_j)$,
    the $\Sigma^0_i$-relativized least closed point theorem for $\Sigma^0_j$ operators, as follows.
    \begin{itembox}[l]{$\Sigma^0_i\LCP(\Sigma^0_j)$}
    \begin{description}
      \item[\sf Input] A set $A$ and a Gödel number $\GN{\varphi}$ of an $X$-positive formula $\varphi(n,X,Y) \in \Sigma^0_j$. (The same as $\FP(\Sigma^0_j)$).
      \item[\sf Output] A closed point $X$ of the operator such that there is no $\Sigma^0_i(A \oplus X)$ definable closed point $Y$ with $Y \subsetneq X$.
    \end{description}
  \end{itembox}
\end{definition}

\begin{remark}
  We can define $\Omega\LFP(\Omega')$ and $\Omega\LCP(\Omega')$ for $\Omega,\Omega'\in \{\Sigma^0_i,\Delta^0_i,\Pi^0_i : i \in \omega\}$ in the same manner.
\end{remark}

\begin{remark}
  We note that the following assertions are mutually equivalent over $\ACAo$.
  \begin{enumerate}
    \item $\Pi^1_1\myhyphen\mathbf{CA}_0$.
    \item The existence of a least fixed point for operators defined by positive arithmetical formulas.
    \item The existence of a least closed point for operators defined by positive arithmetical formulas.
    \item The existence of a leftmost path for ill-founded trees.
  \end{enumerate}
  The equivalence of (1.) and (4.) is in \cite[Theorem 6.5]{marcone1996logical}.
  The equivalence of (2.) and (3.) is due to \cite{jagerfixedpoint}.
\end{remark}
We will show that the equivalences among (2.), (3.) and (4.) also hold for relativized versions in the sense of Weihrauch reducibility.

\begin{theorem}\label{from LFP to LPP}
  Let $k \in \omega$. Then, $\Delta^0_k\LPP \leq_W \Delta^0_k\LFP(\Pi^0_1)$.
\end{theorem}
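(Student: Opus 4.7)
The plan is to encode the "leftmost path" problem by the standard operator whose least fixed point is the set of nodes with no extension to a path, then exploit the relativized minimality of the fixed point to force any supposedly-smaller path to produce a smaller definable fixed point.

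Given an input $(T,f)$ for $\Delta^0_k\LPP$, set $A = T \oplus f$ and define a $Y$-parameter formula $\varphi(\sigma,X,Y)$ expressing
\begin{equation*}
  \sigma \notin T_Y \ \lor\ \forall n\, (\sigma \hat{}\, n \in X),
\end{equation*}
where $T_Y$ denotes the tree coded by $Y$. This is $X$-positive and $\Pi^0_1$, and $\varphi$ defines a monotone operator $\Gamma$ on $\mathcal{P}(\omega^{<\omega})$ whose least fixed point is exactly the set of dead nodes of $T$. Apply $\Delta^0_k\LFP(\Pi^0_1)$ to the input $(A,\GN{\varphi})$ to obtain a fixed point $X$ with no strictly smaller $\Delta^0_k(A\oplus X)$ fixed point.

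The first key step is to show that the empty sequence $\langle\rangle$ does not belong to $X$. For this, consider $X' = X \setminus \{f[n] : n \in \omega\}$. A short case analysis (for $\sigma \in \Gamma(X')$ in each of the disjuncts, and conversely for $\sigma \in X'$, using that $f[n]\hat{}f(n) = f[n+1]$ is again excluded from $X'$) shows that $X'$ is again a fixed point of $\Gamma$. Since $X' \leq_T X \oplus f$, it is $\Delta^0_k(A\oplus X)$; and if $\langle\rangle \in X$, then $\langle\rangle = f[0] \in X \setminus X'$, so $X' \subsetneq X$, contradicting minimality. Hence $\langle\rangle \notin X$, and we can define $g \leq_T T \oplus X$ by the recursion: $g[0] = \langle\rangle$, and given $g[n] \in T \setminus X$, choose $g(n)$ to be the least $m$ with $g[n]\hat{}m \notin X$, which exists because $g[n] \notin \Gamma(X) = X$ while $g[n] \in T$. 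The defining clause of $\Gamma$ also forces $g[n]\hat{}g(n) \in T$, so $g \in [T]$.

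It remains to verify that $g$ is $\Delta^0_k$-minimal among paths of $T$. Suppose, for contradiction, that $h \in [T]$ is $\Delta^0_k(T \oplus f \oplus g)$ and $h <_l g$. Pick $n$ with $h[n] = g[n]$ and $h(n) < g(n)$. Since $g(n)$ is the least $m$ with $g[n]\hat{}m \notin X$, we have $h[n+1] = g[n]\hat{}h(n) \in X$. Now set $X'' = X \setminus \{h[m] : m \in \omega\}$. By exactly the same verification used for $X'$ (with $f$ replaced by $h$), $X''$ is a fixed point of $\Gamma$; it is strictly smaller than $X$ because $h[n+1] \in X \setminus X''$; and it is $\Delta^0_1(X \oplus h) \subseteq \Delta^0_k(X \oplus A)$, since $g \leq_T T \oplus X$ and hence $h \in \Delta^0_k(T \oplus f \oplus g) \subseteq \Delta^0_k(A \oplus X)$. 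This contradicts the $\Delta^0_k$-minimality of $X$, so no such $h$ exists.

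The main obstacle, or at least the only genuinely delicate point, is the bookkeeping that ensures $X \setminus \{f[n]\}$ and $X \setminus \{h[n]\}$ are again fixed points — one must carefully check both inclusions, using that removing a point $f[m]$ from $X$ also removes it as a witness required for $f[m-1]$ to remain in $X$, but this is harmless because $f[m-1]$ is itself removed. Once this is verified, everything else is bookkeeping about the complexity of the definitions involved.
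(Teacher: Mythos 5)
Your proof is correct and follows essentially the same route as the paper's: both reduce via an $X$-positive $\Pi^0_1$ operator on tree nodes, hinge on the same key lemma that deleting $\{h[n]:n\in\omega\}$ from a fixed point (for any path $h$ of $T$) yields another fixed point computable from the fixed point together with $h$, and invoke the relativized minimality of the $\LFP$ output twice --- once to see that $f$ survives outside $X$ and once to rule out a $\Delta^0_k$-definable path to the left of $g$. The only difference is cosmetic: the paper uses the operator $X\mapsto S_X\cup\{\sigma:\exists\tau\preceq\sigma\,(\tau\text{ an endnode of }T-S_X)\}$ and takes the leftmost path of the resulting pruned tree $T-X$, whereas you use the classical non-extendibility operator and build $g$ greedily by always choosing the least child outside $X$, which yields the same path.
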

\begin{proof}
  Let $T \subseteq \omega^{<\omega}$ be an ill-founded tree and $f$ be a path of $T$. We will find a $\Delta^0_k$-relativized leftmost path of $T$.
  In the following argument, we fix a computable bijection between $\omega$ and $\omega^{<\omega}$. Thus we can apply $\Delta^0_k\LFP(\Pi^0_1)$ to monotone operators on $\mathcal{P}(\omega^{<\omega})$.

  For each $X \subseteq \omega^{<\omega}$, define $S_X := \{\sigma: \exists \tau \in X (\tau \preceq \sigma) \}$.
  Define an operator $\Gamma$ by
  \begin{align*}
    \Gamma(X) = S_X \cup \{\sigma \in \omega^{<\omega} : (\exists \tau \preceq \sigma)(\tau \text{ is an endnode of } T - S_X) \}.
  \end{align*}
  We show that $\Gamma(X)$ is $\Pi^0_1$ and $X$-positive. Let $\sigma \ast \tau$ denote the concatenation of $\sigma$ and $\tau$.
  Define $\theta_1(\sigma),\theta_2(\sigma)$ and $\theta(\sigma)$ as follows:
  \begin{align*}
    \theta_1(\sigma) &\equiv \exists \tau \preceq \sigma (\tau \in X), \\
    \theta_2(\sigma) &\equiv \exists \tau \preceq \sigma (\tau \in T - S_X \land \forall n( \tau \ast \langle n \rangle \not \in T - S_X)), \\
    \theta(\sigma) &\equiv \theta_1(\sigma) \lor \theta_2(\sigma).
  \end{align*}
  Then $\theta$ is $\Pi^0_1$ in $T \oplus X$ because $S_X$ is $\Delta^0_0$ definable in $X$. In addition, $\theta$ is $X$-positive. It is easy to show that $\theta$ defines $\Gamma$.

\setcounter{claimcounter}{1}

\begin{claim}
  Let $X$ be a fixed point of $\Gamma$. Then
  $T-X$ has no endnode.
\end{claim}
\addtocounter{claimcounter}{1}
\begin{proof of claim}
  Let $X$ be a fixed point. Then $X \subseteq S_X \subseteq \Gamma(X) = X$.
  Thus $X = S_X = \Gamma(X)$.
  Let $\sigma \in T - X$. Then $\sigma \in T - S_X$.
  If there exists a $\tau \preceq \sigma$ such that $\tau$ is an endnode of $T - X = T - S_X$, then
  $\sigma \in \Gamma(X) = X$.
  However, this is a contradiction because $\sigma \in T - X$.
  Thus, any $\tau \preceq \sigma$ is not an endnode of $T - X$. In particular, $\sigma$ is not an endnode.
  \qedclaim
\end{proof of claim}

We note that the set parameter occurring in the definition of $\Gamma$ is just $T$. However, we may assume that both of $T$ and $f$ occur in the definition.
Therefore, we can apply $\Delta^0_k\LFP(\Pi^0_1)$ to $(\theta,T \oplus f)$.
\begin{claim}
  If $X$ is an output of $\Delta^0_k\LFP(\Pi^0_1)(\theta,T \oplus f)$, then $f \in [T-X]$.
\end{claim}
\addtocounter{claimcounter}{1}

\begin{proof of claim}
  At first we  show that if $X$ is a fixed point of $\Gamma$, then so is $X - \{f[n]: n \in \omega\}$.

  Let $X$ be a fixed point of $\Gamma$ and $Y = X - \{f[n]: n \in \omega\}$.
  Since $Y \subseteq \Gamma(Y)$ by definition of $\Gamma$, it is enough to show that $Y \supseteq \Gamma(Y)$.
  We first show that $Y = S_Y$. $Y \subseteq S_Y$ is by definition.
  Let $\sigma \in S_Y$. Then $\sigma \in S_X = X$. On the other hand, there exists a $\tau \preceq \sigma$ such that $\tau \in Y$. Take such a $\tau$.
  By definition of $Y$, $\tau \neq f[|\tau|]$. Since $\tau \preceq \sigma$, $\sigma \not \prec f$. So if $\sigma \in S_Y$, then $\sigma \in X$ but $\sigma \not \prec f$. This means $\sigma \in Y$. Therefore, $S_Y \subseteq Y$.

  Let $\sigma \in \Gamma(Y)$. Then, by definition of $\Gamma$,
\begin{itemize}
  \item $\sigma \in S_Y = Y $, or
  \item there is an initial segment $\tau$ of $\sigma$ which is an endnode of $T - S_Y = T- Y$.
\end{itemize}
  We show  that the second case never happens.
  Pick $\sigma \in \Gamma(Y)$ and $\tau \preceq \sigma$, and assume $\tau$ is an endnode of $T - Y$.
  We show that $\tau$ must be an endnode of $T - X$.
  Since $\tau \in T- Y$, $(\tau \in T) \land (\tau \prec f \lor \tau \not \in X)$.
  However, if $\tau \prec f$ then $\tau \prec f[|\tau|+1] \in T - Y$ but this is impossible because $\tau$ is an endnode of $T-Y$. So $\tau \in T - X$.
  Moreover, if $\tau \ast \langle n \rangle \in T-X$ for some $n$, then $\tau \ast \langle n\rangle \in T- Y$ but this never happens. So $\tau$ is an endnode of $T - X$.
  However, as we have already proved, $T-X$ has no endnode.

  Now $Y \subseteq X$ is a fixed point of $\Gamma$ such that $Y \in \Sigma^0_0(T \oplus f \oplus X)$ and $f \in [T-Y]$.
  If $X$ is an output of $\Delta^0_k\LFP(\Pi^0_1)(\theta,T \oplus f)$, then $X = Y$.
  Thus $f \in [T -X]$.
  \qedclaim
\end{proof of claim}
  Let $X$ be an output of $\Delta^0_k\LFP(\Pi^0_1)(\theta,T \oplus f)$.
  Then by the above claims, $T - X$ is a nonempty pruned tree.
  So there exists $g\leq_T T  \oplus X $ such that $g$ is the leftmost path of $T -X$.
  Assume this $g$ is not a $\Delta^0_k$ leftmost path of $(T,f)$.
  Then, we can pick $h \in [T] \cap \Delta^0_k(T,f,g)$ such that $h <_l g$.
  On the other hand, by the same way of the proof of the second claim, $X - \{h[n]: n \in \mathbb{N}\}$ is a fixed point smaller than $X$ definable in $\Delta^0_k(X)$.
  So this is a contradiction.
\end{proof}
\begin{remark}\label{rmk LPP and LCP}
  The operator $\Gamma$ in the above proof satisfies $\forall X(X \subseteq \Gamma(X))$.
  Thus, for any $X$, $X$ is a fixed point of $\Gamma$ if and only if $X$ is a closed point of $\Gamma$.
  Therefore, we also have $\Delta^0_k\LPP \leq_W \Delta^0_k\LCP(\Pi^0_1)$.
\end{remark}

\begin{remark}
  We note that the least fixed point for a $\Sigma^0_1$ monotone operator is $\Sigma^0_1$ definable from the operator.
  In fact, if $\Gamma$ is a $\Sigma^0_1$ monotone operator, then $\bigcup_n \Gamma^n(\varnothing)$ is the least fixed point.
  Moreover, clearly $\Sigma^0_k\LFP(\Sigma^0_1)$ is stronger than $\Sigma^0_1\mathchar`-\mathsf{CA}$.
  Thus, $\Sigma^0_k\LFP(\Sigma^0_1)$ is equivalent to $\mathsf{lim}$ and hence much weaker than $\ATR$ although $\Sigma^0_k\LFP(\Pi^0_1)$ is stronger than $\ATR_2$ as we saw the previous theorem.
\end{remark}

\begin{theorem}\label{thm LFP and LCP}
  Let $k,k' \in \omega, k' < k$. Then $\Delta^0_k\LFP(\Pi^0_{k'}) \leq_W \Delta^0_k\LCP(\Pi^0_{k'})$.
\end{theorem}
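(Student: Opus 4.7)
The plan is to show that the identity reduction works: given an input $(A,\GN{\varphi})$ for $\Delta^0_k\LFP(\Pi^0_{k'})$, we hand it unchanged to $\Delta^0_k\LCP(\Pi^0_{k'})$ and output the resulting closed point. The two problems have the same input condition (an $X$-positive $\Pi^0_{k'}$ formula and a set parameter), so the only thing to verify is that any output $C$ of $\Delta^0_k\LCP(\Pi^0_{k'})(A,\GN{\varphi})$ qualifies as an output of $\Delta^0_k\LFP(\Pi^0_{k'})(A,\GN{\varphi})$. Writing $\Gamma = \Gamma_{\varphi,A}$ for the monotone operator defined by $\varphi$, this amounts to checking two things: that $C$ is actually a fixed point of $\Gamma$, and that no $\Sigma^0_k(A\oplus C)$-definable fixed point of $\Gamma$ is strictly contained in $C$.

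First I would argue that the closed point $C$ is in fact a fixed point. Since $\Gamma(C)\subseteq C$ and $\Gamma$ is monotone, we have $\Gamma(\Gamma(C))\subseteq \Gamma(C)$, so $\Gamma(C)$ is itself a closed point of $\Gamma$. Because $\varphi$ is $\Pi^0_{k'}$, the set $\Gamma(C) = \{n : \varphi(n,C,A)\}$ lies in $\Pi^0_{k'}(A\oplus C)$, and since $k' < k$ the inclusion $\Pi^0_{k'}\subseteq \Sigma^0_k$ gives $\Gamma(C)\in \Sigma^0_k(A\oplus C)$. If $\Gamma(C)\subsetneq C$ then $\Gamma(C)$ would be a $\Sigma^0_k(A\oplus C)$-definable closed point strictly smaller than $C$, contradicting the $\LCP$ output condition. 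Hence $\Gamma(C)=C$, so $C$ is a fixed point of $\Gamma$.

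For the $\Sigma^0_k$-minimality clause of $\LFP$, suppose $Y\subsetneq C$ is a $\Sigma^0_k(A\oplus C)$-definable fixed point of $\Gamma$. Then $\Gamma(Y)=Y\subseteq Y$, so $Y$ is in particular a $\Sigma^0_k(A\oplus C)$-definable closed point strictly contained in $C$, again contradicting the $\LCP$ output condition. Thus $C$ satisfies both clauses defining an output of $\Delta^0_k\LFP(\Pi^0_{k'})$, and the identity reduction succeeds. The only subtle place in the proof is the hypothesis $k'<k$, which is used exactly once, to embed $\Pi^0_{k'}$ into $\Sigma^0_k$ so that $\Gamma(C)$ counts as a forbidden smaller witness; without this slack one cannot guarantee the complexity bound on the potential counterexample, and the argument would break down. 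This is the one step I would flag as essential rather than routine.
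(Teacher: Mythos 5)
Your proposal is correct and matches the paper's proof essentially verbatim: the identity reduction, showing $\Gamma(C)$ is a closed point by monotonicity and is definable within the allowed complexity class (this is exactly where $k'<k$ is used), hence $\Gamma(C)=C$, and transferring minimality because every fixed point is a closed point. The only cosmetic difference is that the paper states the minimality clause with $\Delta^0_k(A\oplus X)$ rather than $\Sigma^0_k(A\oplus X)$, but the containment $\Pi^0_{k'}\subseteq\Delta^0_k$ for $k'<k$ makes your argument go through unchanged.
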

\begin{proof}
  Let $\theta(n,X,Y)$ be a $\Pi^0_{k'}$ $X$-positive formula and $A$ be a set.
  We will show that any output of $\Delta^0_k\LCP(\Pi^0_{k'})(\theta,A)$ is an output of $\Delta^0_k\LFP(\Pi^0_{k'})(\theta,A)$.
  In the following argument, we write $\Gamma$ to mean the monotone operator defined by $\theta$ and $A$.

  Let $X$ be an output of $\Delta^0_k\LCP(\Pi^0_{k'})(\theta,A)$.
  Then, $\Gamma(X) \subseteq X$.
  Thus, by the monotonicity of $\Gamma$, $\Gamma^2(X) \subseteq \Gamma(X)$. Hence $\Gamma(X)$ is also a closed point.
  Since $\Gamma$ is a $\Pi^0_{k'}$-operator, $\Gamma(X)$ is $\Pi^0_{k'}$ definable from $X$ and $A$.
  Consequently, $\Gamma(X)$ is a $\Delta^0_k(X\oplus A)$-definable closed point which is smaller or equal to $X$. Thus $\Gamma(X) = X$, that is, $X$ is a fixed point of $\Gamma$.
  Additionally, there is no $\Delta^0_k(X,A)$-definable fixed point strictly smaller than $X$ because a fixed point is a closed point.
  Therefore, $X$ is an output of $\Delta^0_k\LFP(\Pi^0_{k'})(\theta,A)$.
\end{proof}

For the reduction of $\LCP$ to $\LPP$, we use products of trees.
\begin{definition}
  For each $\sigma \in \omega^{<\omega}$ and $l \in \omega$, we define
  $\sigma_l$ as the sequence $\langle \sigma((l,0)),\ldots,\sigma((l,n_l)\rangle$.
  Here, $n_l$ is the maximum integer satisfying $(l,n_l) < |\sigma|$.
  By this definition, we can regard a sequence $\sigma$ as a sequence of sequences $\langle \sigma_l \rangle_{l < l'}$ for some $l'$.

  Let $\langle T_l \rangle_l$ be a sequence of ill-founded trees.
  Define a tree $\mathcal{S}(\langle T_l\rangle_l) = \{\sigma \in \omega^{<\omega} : (\forall l < |\sigma|)(\sigma_l \in T_l)\}$.
\end{definition}
\begin{lemma}\label{Suslin-op}
  There exists a computable bijection between $[\mathcal{S}(\langle T_l\rangle_l)]$ and $\prod_l[T_l] = [T_0] \times [T_1] \times \cdots$.
\end{lemma}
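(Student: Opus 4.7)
The plan is to define explicit computable maps in both directions using the pairing function that already appears in the definition of $\mathcal{S}$, and then check that they are mutually inverse.

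First I would define $\Phi : [\mathcal{S}(\langle T_l\rangle_l)] \to \prod_l [T_l]$ by
\[
  \Phi(f) = \langle f_l \rangle_l, \quad \text{where } f_l(n) = f((l,n)).
\]
To see that each $f_l$ is a path through $T_l$, fix $l$ and $n$. Choose any $m$ large enough that $(l,n) < m$. Since $f \in [\mathcal{S}(\langle T_l\rangle_l)]$, the initial segment $f[m]$ lies in $\mathcal{S}(\langle T_l\rangle_l)$, hence $(f[m])_l \in T_l$. By definition of the decoding $\sigma \mapsto \sigma_l$, the sequence $(f[m])_l$ is an initial segment of $f_l$ of length at least $n+1$. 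Since $T_l$ is closed under initial segments, $f_l[n+1] \in T_l$; letting $n$ vary shows $f_l \in [T_l]$.

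Next I would define $\Psi : \prod_l [T_l] \to [\mathcal{S}(\langle T_l\rangle_l)]$ by setting, for $\langle g_l \rangle_l$,
\[
  \Psi(\langle g_l \rangle_l)((l,n)) = g_l(n).
\]
To check $\Psi(\langle g_l \rangle_l) \in [\mathcal{S}(\langle T_l\rangle_l)]$, fix $m$; for each $l < m$, the sequence $(\Psi(\langle g_l \rangle_l)[m])_l$ is an initial segment of $g_l \in [T_l]$, hence lies in $T_l$, which shows $\Psi(\langle g_l \rangle_l)[m] \in \mathcal{S}(\langle T_l\rangle_l)$.

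Both $\Phi$ and $\Psi$ are computable since they simply permute coordinates according to the computable pairing function, and a direct comparison of definitions gives $\Phi \circ \Psi = \mathrm{id}$ and $\Psi \circ \Phi = \mathrm{id}$. There is no real obstacle here: the whole content of the lemma is that the tree $\mathcal{S}(\langle T_l \rangle_l)$ was designed so that its paths encode precisely sequences of paths through the $T_l$ via the pairing function, and the verification is just an unwinding of the definition.
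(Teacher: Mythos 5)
Your proposal is correct and takes essentially the same approach as the paper, which simply defines $f_l(n) = f((l,n))$ and asserts that this correspondence works; you have just written out the inverse map and the routine verification that the paper leaves implicit.
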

\begin{proof}
  For each $f \in [\mathcal{S}(\langle T_l\rangle_l)]$, define $\langle f_l \rangle_l$ by $f_l(n) = f(l,n)$.
  This correspondence is the desired one.
\end{proof}
By this correspondence, we identify a path $f$ and a sequence of paths $\langle f_l \rangle_l$.

\begin{lemma}(Essentially \cite[Lemma 2.4.]{Caristi-fixed-point})
  The following problem $\P$ is reducible to $\Delta^0_n\LPP$.
  \begin{itembox}[l]{$\P$}
  \begin{description}
    \item[\sf Input] A sequence $\langle T_l \rangle_l$ of trees.
    \item[\sf Output] A set $g$ such that $g$ computes the set $\{l : [T_l] \cap \Sigma^0_n(g,\langle T_l \rangle_l) \neq \varnothing\}$.
  \end{description}
\end{itembox}
\end{lemma}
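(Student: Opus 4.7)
The plan is to reduce $\P$ to $\Delta^0_n\overline{\LPP}$ by packaging the sequence $\langle T_l \rangle_l$ into a single ill-founded tree $T$ whose $\Delta^0_n$-leftmost path encodes, in the initial symbol of each coordinate, whether $[T_l]$ admits a $\Sigma^0_n$-definable path.

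The construction attaches a distinguished computable escape branch to each $T_l$. I will set
\begin{equation*}
T_l^* = \{\langle 0\rangle \ast \sigma : \sigma \in T_l\} \cup \{\tau \in \omega^{<\omega} : \tau(i) = 1 \text{ for all } i < |\tau|\},
\end{equation*}
so that $[T_l^*]$ consists of the paths $\langle 0\rangle \ast f$ with $f \in [T_l]$ together with the single escape path $\langle 1,1,\ldots\rangle$. Put $T := \mathcal{S}(\langle T_l^* \rangle_l)$. Because each $T_l^*$ is inhabited, so is $T$ (the all-ones sequence gives a computable path), and hence $\Delta^0_n\overline{\LPP}$ applies.

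Next I will apply $\Delta^0_n\overline{\LPP}$ to $T$ and, by Lemma~\ref{Suslin-op}, view the output as a sequence $g = \langle g_l \rangle_l$ with $g_l \in [T_l^*]$. The key claim to establish is
\begin{equation*}
g_l(0) = 0 \iff [T_l] \cap \Sigma^0_n(g, \langle T_l\rangle_l) \neq \varnothing
\end{equation*}
for every $l$. The forward direction is immediate, since $g_l(0) = 0$ forces $g_l = \langle 0\rangle \ast f_l$ with $f_l \in [T_l]$ computable from $g$. For the reverse direction, given $f \in [T_l] \cap \Sigma^0_n(g, \langle T_l\rangle_l)$, I construct a competitor $h \in [T]$ by swapping in $\langle 0\rangle \ast f$ at the $l$-th coordinate and keeping $h_i = g_i$ for $i \neq l$; then $h \in \Delta^0_n(g)$ and leftmostness gives $g \leq_l h$, which forces $g_l(0) = 0$. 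Granting the claim, the target set equals $\{l : g((l,0)) = 0\}$, which is computable from $g$.

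The delicate point will be the lexicographic bookkeeping in the reverse direction: I must ensure that the first coordinate on which $g$ and $h$ disagree is precisely the one coding $(l,0)$, rather than some earlier natural number. Under the standard pairing, every position strictly below the one coding $(l,0)$ either codes $(l',n')$ for some $l' \neq l$ (where $g$ and $h$ agree by construction) or does not correspond to the $l$-th coordinate at all, so no earlier disagreement can arise and the comparison $h <_l g$ goes through as intended.
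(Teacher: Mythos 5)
Your construction is the one the paper uses: attach a computable all-ones escape branch to each $T_l$, form the product tree via $\mathcal{S}$, extract a relativized leftmost path $g=\langle g_l\rangle_l$, and read off membership of $l$ from $g_l(0)$; the competitor argument and the pairing bookkeeping in the last paragraph are also handled correctly. The one genuine gap is your decision to invoke $\Delta^0_n\overline{\LPP}$ rather than $\Delta^0_n\LPP$. The guarantee of $\overline{\LPP}$ only excludes competing paths in $\Delta^0_n(g)$, whereas the competitor $h$ you build from a witness $f\in[T_l]\cap\Sigma^0_n(g,\langle T_l\rangle_l)$ lies a priori only in $\Delta^0_n(g\oplus\langle T_l\rangle_l)$. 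A path of the product tree need not compute $\langle T_l\rangle_l$ (if every $g_l$ is the all-ones escape path, then $g$ is computable), so the assertion ``$h\in\Delta^0_n(g)$'' does not follow, and leftmostness relative to $g$ alone does not rule $h$ out.

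The repair is immediate and is exactly what the paper does: apply $\Delta^0_n\LPP$ to the pair $(T,1^\infty)$, where $1^\infty$ is the all-ones path witnessing ill-foundedness. Its guarantee excludes competitors in $\Delta^0_n(T\oplus 1^\infty\oplus g)$, and $T=\mathcal{S}(\langle T_l^*\rangle_l)$ computes $\langle T_l\rangle_l$ (to decide $\tau\in T_l$, query the string whose $l$-th coordinate is $\langle 0\rangle\ast\tau$ and whose other coordinates are padded with ones), so every $\Sigma^0_n(g,\langle T_l\rangle_l)$-definable competitor is covered. Alternatively, you could keep $\overline{\LPP}$ but first interleave $\langle T_l\rangle_l$ into every path of the product tree, as in the proof of Lemma \ref{A new version of LPP}; either way the rest of your argument goes through unchanged.
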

\begin{proof}
  Let $\langle T_l \rangle_l$ be a set of trees.
  For each $l$, put $T'_l = \{(0)\ast \sigma : \sigma \in T_l\} \cup \{1^n : n \in \omega\}$.
  Then, each $T'_l$ has the rightmost path $1^\infty = \langle 1,1,1,\ldots \rangle$.

  Put $S = \mathcal{S}(\langle T'_l \rangle_l)$.
  We note that $1^\infty$ is a path of $S$. Thus, we can apply $\Delta^0_k\LPP$ to $(S,1^\infty)$.
  Let $g$ be an output of $\Delta^0_k\LPP(S,1^\infty)$.
  Then each $g_l$ is a path of $T'_l$.

  \setcounter{claimcounter}{1}
  \begin{claim}
    Put $X = \{l : g_l(0) = 0\}$. Then $X = \{l : [T_l] \cap \Sigma^0_n(g,\langle T_l \rangle_l) \neq \varnothing\}$.
  \end{claim}

  \begin{proof of claim}
    The inclusion $X \subseteq \{l : [T_l] \cap \Sigma^0_n(g,\langle T_l \rangle_l) \neq \varnothing\}$ is immediate. Thus, we will show that the other inclusion holds.

    Let $l_0$ be such that $[T_{l_0}] \cap \Sigma^0_n(g,\langle T_l \rangle_l) \neq \varnothing$ but $g_{l_0}(0) = 1$.
    Pick $h \in [T_{l_0}] \cap \Sigma^0_n(g,\langle T_l \rangle_l)$ and let $\langle g'_l \rangle_l$ be the sequence
    obtained from $\langle g_l \rangle_l$ by replacing $g_{l_0}$ with $(0)\ast h$. Then $g' = \langle g'_l \rangle_l$ is also a path of $S$ and it is left to $g$.
    Since $g'$ is $\Sigma^0_n(g,\langle T_l \rangle_l)$-definable, this contradicts the fact that $g$ is an output of  $\Delta^0_k\LPP(S,1^{\infty})$.
    Therefore, we have $g_{l_0}(0) = 0$.
    \qedclaim
  \end{proof of claim}
  Since this $X$ is computable from $g$, we conclude that $g$ satisfies the desired condition.
\end{proof}

\begin{theorem}\label{thm LCP and LPP}(Essentially \cite[Lemma 2.4.]{Caristi-fixed-point})
  Let $k \in \omega$. Then $\Delta^0_k\LCP(\Sigma^0_2) \leq_W \Delta^0_k\LPP$.
\end{theorem}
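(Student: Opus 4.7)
The plan is to reduce $\Delta^0_k\LCP(\Sigma^0_2)$ to the problem $\P$ of the preceding lemma, which has already been shown $\leq_W \Delta^0_k\LPP$. Given an input $(\GN{\varphi}, A)$ for $\Delta^0_k\LCP(\Sigma^0_2)$, write $\varphi(n, X, Y) \equiv \exists y\, \forall z\, \theta(n, y, z, X, Y)$ with $\theta$ an $X$-positive $\Sigma^0_0$ formula, so that $\neg\theta$ is $X$-negative. I will construct uniformly computably in $A$ a sequence $\langle T_l \rangle_{l \in \omega}$ of trees on $\omega$ whose paths correspond, effectively, to closed points of $\Gamma_\varphi$ missing $l$.

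Nodes of $T_l$ code pairs $(\sigma_Y, \sigma_w)$, where $\sigma_Y \in 2^{<\omega}$ is a partial characteristic function constrained by $\sigma_Y(l) = 0$ (once $l < |\sigma_Y|$) and $\sigma_w$ assigns, to each pair $(n, y)$ with $\sigma_Y(n) = 0$, a number $z = \sigma_w(n, y)$ such that $\sigma_Y$ already determines every $Y$-query appearing in $\theta(n, y, z, \cdot, A)$ and $\neg\theta$ evaluates to true under this finite information. Because $\neg\theta$ is $X$-negative, truths of the form $m \notin Y$ witnessed by $\sigma_Y(m) = 0$ are preserved under extensions, so membership in $T_l$ is a $\Sigma^0_0$ condition in $A$. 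A path through $T_l$ yields $Y = \{m : \sigma_Y(m) = 1\}$ and a total function $w$ satisfying $\forall n \notin Y\, \forall y\, \neg\theta(n, y, w(n, y), Y, A)$, i.e.\ $\forall n \notin Y\, \neg\varphi(n, Y, A)$, so $Y$ is a closed point of $\Gamma_\varphi$ with $l \notin Y$. Conversely, any such closed point $Y$ extends to a path by setting $w(n, y)$ to be the least witness of $\neg\theta(n, y, z, Y, A)$. Crucially, this correspondence is Turing-effective over $A$, so $[T_l] \cap \Sigma^0_k(g, \langle T_l\rangle) \neq \varnothing$ iff there is a $\Delta^0_k(g, A)$-definable closed point of $\Gamma_\varphi$ missing $l$.

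Feeding $\langle T_l \rangle$ into the preceding lemma yields $g$ computing $S = \{l : [T_l] \cap \Sigma^0_k(g, \langle T_l\rangle) \neq \varnothing\}$. Put $X = \omega \setminus S$; then $X \leq_T g$, and $X$ is the intersection of all $\Delta^0_k(g, A)$-definable closed points of $\Gamma_\varphi$. By monotonicity of $\Gamma_\varphi$, the intersection of closed points is again a closed point (for each closed point $Y$ in the family, $X \subseteq Y$ gives $\Gamma_\varphi(X) \subseteq \Gamma_\varphi(Y) \subseteq Y$, hence $\Gamma_\varphi(X) \subseteq X$), so $X$ is a closed point of $\Gamma_\varphi$. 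If $Y \subsetneq X$ were a $\Delta^0_k(X, A)$-definable closed point, then $X \leq_T g$ would give $Y \in \Delta^0_k(g, A)$, and any $l \in X \setminus Y$ would belong to $S$, contradicting $l \in X$. Hence $X$ is a valid output of $\Delta^0_k\LCP(\Sigma^0_2)(\GN{\varphi}, A)$, and the reduction is established.

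The main obstacle is the tree construction: one must precisely code $(\sigma_Y, \sigma_w)$ so that $T_l$ is computable in $A$, and verify that the bijection between paths of $T_l$ and (closed point missing $l$, witness function)-pairs preserves Turing degree over $A$. The delicate point is exploiting $X$-negativity of $\neg\theta$ so that witnesses committed on a finite $\sigma_Y$ remain valid as $\sigma_Y$ is extended; once this is done, intersection and diagonalization finish the proof.
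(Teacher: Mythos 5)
Your proposal is correct and follows essentially the same route as the paper: both encode ``$Y$ is a closed point of $\Gamma_\varphi$ with $l \notin Y$'' (a $\Pi^0_2$ condition in $Y$) as the paths of a tree $T_l$ via a Skolem witness function computable from $Y$, apply the preceding lemma to compute $S=\{l : [T_l] \cap \Sigma^0_k(g,\langle T_l\rangle)\neq\varnothing\}$, and output the complement, which is the intersection of the $\Delta^0_k$-definable closed points and hence itself a closed point with no strictly smaller $\Delta^0_k$-definable one. The only cosmetic difference is that you spell out the tree construction with partial characteristic functions (the appeal to $X$-negativity is not actually needed there, since $\sigma_Y\in 2^{<\omega}$ already determines membership both ways), whereas the paper simply invokes the standard normal form for $\Pi^0_2$ predicates.
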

\begin{proof}
  Let $\Gamma$ be a $\Sigma^0_2$ monotone operator.
  Then the formula $\theta(x,Y) \equiv \Gamma(Y) \subseteq Y \land x \in Y^c$ is $\Pi^0_2$, and hence there exists a $\Delta^0_0$ formula $\varphi$ such that
  \begin{itemize}
    \item $(\forall x,Y)(\theta(x,Y) \leftrightarrow \exists f\in\omega^{\omega} \forall m \varphi(x,Y[m],f[m]))$,
    \item if the righthand side is true, then a witness $f$ is computable from $Y$ uniformly.
  \end{itemize}

  Pick a sequence $\langle T_x \rangle_x$ of trees such that
  \begin{align*}
    (\forall x,Y,f)(Y \oplus f \in [T_x] \leftrightarrow \forall m\varphi(x,Y[m],f[m])).
  \end{align*}
  By $\Delta^0_k\LPP$, we can find a set $g$ such that
  \begin{align*}
    X = \{x : [T_x] \cap \Delta^0_k(g,\langle T_x \rangle_x) \neq \varnothing\} \leq_T g.
  \end{align*}
  Now we have
  \begin{align*}
    x \in X &\leftrightarrow (\exists Y \oplus f \in \Delta^0_k(g,\langle T_x \rangle_x))  (\forall m)\varphi(x,Y[m],f[m]) \\
    &\to(\exists Y \in \Delta^0_k(g,\langle T_x \rangle_x)) (\exists f) (\forall m)\varphi(x,Y[m],f[m]) \\
    &\to (\exists Y \in \Delta^0_k(g,\langle T_x \rangle_x)) (\exists f \leq_T Y) (\forall m)\varphi(x,Y[m],f[m]) \\
    &\to x \in X.
  \end{align*}
  Therefore, $x \in X$ is equivalent to
  $\exists Y \in \Delta^0_k(g,\langle T_x \rangle_x)) \theta(x,Y)$. Thus we have
  \begin{align*}
    X &= \bigcup_{\substack{Y \in \Delta^0_n(g,\langle T_l \rangle_l), \\ \Gamma(Y) \subseteq Y}} Y^c,
  \end{align*}
  and hence
  \begin{align*}
    X^c &= \bigcap_{\substack{Y \in \Delta^0_n(g,\langle T_l \rangle_l), \\ \Gamma(Y) \subseteq Y}} Y.
  \end{align*}
  Since $X^c$ is an intersection of closed points, $X^c$ is also a closed point.
  We show that any $\Delta^0_k(X)$-definable closed point is bigger than or equal to $X^c$.
  Let $Y \in \Delta^0_k(X)$ be a closed point. We show that $Y^c \subseteq X$.
  Let $x \in Y^c$.
  Then, $\theta(x,Y)$ holds.
  Thus, there exists $f \leq_T Y$ such that $\forall m \varphi(x,Y[m],f[m])$.
  For such an $f$, $Y \oplus f \in [T_x] \cap \Delta^0_k(g,\langle T_x \rangle_x)$.
  Therefore, $x \in X$.
\end{proof}

\begin{corollary}
  For any $k$, we have
  \begin{align*}
    \Delta^0_k\LCP(\Pi^0_1) \equiv_W \Delta^0_k\LCP(\Sigma^0_2) \equiv_W \Delta^0_k\LPP.
  \end{align*}
  Moreover, if $k = 2$, then
  \begin{align*}
    \Delta^0_k\LPP \equiv_W \Delta^0_k\LFP(\Pi^0_1),
  \end{align*}
  and if $k \geq 3$, then
  \begin{align*}
    \Delta^0_k\LPP \equiv_W \Delta^0_k\LFP(\Pi^0_1) \equiv_W \Delta^0_k\LFP(\Sigma^0_2).
  \end{align*}
\end{corollary}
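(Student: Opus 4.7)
The plan is to assemble the three claimed equivalences by chaining together the reductions already established in this section, using only the trivial syntactic inclusion $\Pi^0_1 \subseteq \Sigma^0_2$ as extra glue. For the first equivalence, my first step is to observe that an $X$-positive $\Pi^0_1$ formula is also an $X$-positive $\Sigma^0_2$ formula, and the G\"odel number of the former can be computably converted to one of the latter defining the same operator; this yields the free reduction $\Delta^0_k\LCP(\Pi^0_1) \leq_W \Delta^0_k\LCP(\Sigma^0_2)$. Combined with Remark \ref{rmk LPP and LCP} and Theorem \ref{thm LCP and LPP}, this closes the cycle
\begin{equation*}
  \Delta^0_k\LPP \leq_W \Delta^0_k\LCP(\Pi^0_1) \leq_W \Delta^0_k\LCP(\Sigma^0_2) \leq_W \Delta^0_k\LPP,
\end{equation*}
so all three are Weihrauch equivalent.

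For the second equivalence in the case $k = 2$ (and likewise for the $\Pi^0_1$-part when $k \geq 3$), Theorem \ref{from LFP to LPP} gives $\Delta^0_k\LPP \leq_W \Delta^0_k\LFP(\Pi^0_1)$, while Theorem \ref{thm LFP and LCP} applied with $k' = 1 < k$ gives the converse $\Delta^0_k\LFP(\Pi^0_1) \leq_W \Delta^0_k\LCP(\Pi^0_1)$; the first equivalence then collapses the right-hand side to $\Delta^0_k\LPP$. For the $\Sigma^0_2$ part in the case $k \geq 3$, the syntactic inclusion gives $\Delta^0_k\LFP(\Pi^0_1) \leq_W \Delta^0_k\LFP(\Sigma^0_2)$ for free, so the only remaining step is to reduce $\Delta^0_k\LFP(\Sigma^0_2)$ to $\Delta^0_k\LPP$. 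My plan is to obtain $\Delta^0_k\LFP(\Sigma^0_2) \leq_W \Delta^0_k\LCP(\Sigma^0_2)$ and then invoke the first equivalence.

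The only point that requires any care is this last reduction, since Theorem \ref{thm LFP and LCP} is stated only for $\Pi^0_{k'}$-operators. Inspecting its proof, the complexity hypothesis is used in exactly one place: one needs $\Gamma(X)$ to be $\Delta^0_k$-definable from $X \oplus A$, so that $\Gamma(X)$ qualifies as a competitor among $\Delta^0_k(X \oplus A)$-definable closed points below $X$, which forces $\Gamma(X) = X$. For a $\Sigma^0_2$-operator this step is valid precisely when $\Sigma^0_2 \subseteq \Delta^0_k$, that is, when $k \geq 3$, which matches the hypothesis of the corollary. This mild generalization of Theorem \ref{thm LFP and LCP} is the one spot that is not a pure citation, but the modification is cosmetic: the rest of the argument is unchanged. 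Hence no genuine obstacle arises, and the corollary follows by mechanically composing the cited reductions.
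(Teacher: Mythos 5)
Your proposal is correct and follows essentially the same route as the paper: the same three-step cycle through $\Delta^0_k\LCP(\Pi^0_1)$, $\Delta^0_k\LCP(\Sigma^0_2)$ and $\Delta^0_k\LPP$ via Remark \ref{rmk LPP and LCP} and Theorem \ref{thm LCP and LPP}, then Theorems \ref{from LFP to LPP} and \ref{thm LFP and LCP} for the $\LFP(\Pi^0_1)$ part, and for $k\geq 3$ the same observation that the proof of Theorem \ref{thm LFP and LCP} goes through for $\Sigma^0_2$ operators because $\Gamma(X)$ is then still $\Delta^0_k(X\oplus A)$-definable. Your explicit identification of where the complexity hypothesis enters that last step is exactly the justification the paper leaves implicit.
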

\begin{proof}
  Let $k \in \omega$.
  Then,
  \begin{itemize}
    \item $\Delta^0_k\LCP(\Pi^0_1) \leq_W \Delta^0_k\LCP(\Sigma^0_2)$ trivially holds,
    \item $\Delta^0_k\LCP(\Sigma^0_2) \leq_W \Delta^0_k\LPP$ follows from Theorem \ref{thm LCP and LPP} and
    \item $\Delta^0_k\LPP \leq_W \Delta^0_k\LCP(\Pi^0_1)$ follows from Remark \ref{rmk LPP and LCP}.
  \end{itemize}
  Therefore, we have
  \begin{align*}
    \Delta^0_k\LCP(\Pi^0_1) \equiv_W \Delta^0_k\LCP(\Sigma^0_2) \equiv_W \Delta^0_k\LPP.
  \end{align*}
  Assume $k =2$. Then,
  \begin{itemize}
    \item $\Delta^0_k\LPP \leq_W \Delta^0_k\LFP(\Pi^0_1)$ follows from Theorem \ref{from LFP to LPP} and
    \item $\Delta^0_k\LFP(\Pi^0_1) \leq_W \Delta^0_k\LCP(\Pi^0_1)$ follows from Theorem \ref{thm LFP and LCP}.
  \end{itemize}
  As we have proved just before, $\Delta^0_k\LCP(\Pi^0_1) \equiv_W \Delta^0_k\LPP$. Therefore, we have
  \begin{align*}
    \Delta^0_k\LPP \equiv_W \Delta^0_k\LFP(\Pi^0_1).
  \end{align*}
  Assume $k \geq 3$.
  The equivalence $\Delta^0_k\LPP \equiv_W \Delta^0_k\LFP(\Pi^0_1)$ is proved by the same way as in the case $k =2$. In addition, we have already proved $\Delta^0_k\LPP \equiv_W \Delta^0_k\LCP(\Sigma^0_2)$.
  We note that $\Delta^0_k\LFP(\Sigma^0_2) \leq_W \Delta^0_k\LCP(\Sigma^0_2)$ is proved by the same way as in the proof of Theorem \ref{thm LFP and LCP}.
  Therefore, we have
  \begin{align*}
    \Delta^0_k\LPP \leq_W \Delta^0_k\LFP(\Pi^0_1)\leq_W \Delta^0_k\LFP(\Sigma^0_2) \leq_W \Delta^0_k\LCP(\Sigma^0_2) \equiv_W \Delta^0_k\LPP
  \end{align*}
  which implies $\Delta^0_k\LPP \equiv_W \Delta^0_k\LFP(\Pi^0_1) \equiv_W\Delta^0_k\LFP(\Sigma^0_2)$.
\end{proof}

\section{Questions}
In Section 3, we studied $\FP,\ATR_2$ and their relationship.
We proved that
\begin{enumerate}
  \item $\ATR_2 <_W \FP(\Sigma^0_2)$.
  \item $\ATR_2 \times \ATR_2 \not \leq_W \ATR_2$, and hence $\ATR_2$ is not parallelizable.
\end{enumerate}

\begin{question}
  Open questions in Section 3:
  \begin{enumerate}
    \item Is $\FP(\Sigma^0_2) \leq_W^a \ATR_2$?
    \item What is the relationship between $\FP(\Sigma^0_n)$ and $\FP(\Sigma^0_m)$?
    \item Let $(\ATR_2)^n$ denote the $n$-times parallel product of $\ATR_2$.
    What is the relationship between $(\ATR_2)^n$ and $(\ATR_2)^m$?
  \end{enumerate}
\end{question}

In Section 4, we introduced the $\omega$-model reflection of Weihrauch problems and studied its properties.
We proved that
\begin{itemize}
  \item if $\P$ is a $(\Pi^1_1,\Sigma^1_0)$-representable problem, then $\P \leq_W \P^{\rfn}$,
  \item if $\P$ is an arithmetically representable problem, then $\P <_W \P^{\rfn}$.
\end{itemize}

\begin{question}
  Is there a $(\Pi^1_1,\Sigma^1_0)$-representable problem $\P$ such that $\P \equiv \P^{\rfn}$?
\end{question}

\subsection*{Acknowledgement}
The first author's work is supported by JST, the establishment of university
fellowships towards the creation of science technology innovation,
Grant Number JPMJFS2102.
The second author's work is partially supported by JSPS KAKENHI grant numbers JP19K03601, JP21KK0045 and JP23K03193.
We would like to thank Leonardo Pacheco and Tadayuki Honda for their comments on this paper.
We also would like to thank the anonymous reviewer for his or her fruitful comments.

\bibliography{references}

\begin{thebibliography}{10}

\bibitem{MR1412508}
Jeremy Avigad.
\newblock On the relationship between {{\(ATR_ 0\)}} and {{\(\widehat
  {ID}_{<\omega}\)}}.
\newblock {\em J. Symb. Log.}, 61(3):768--779, 1996.

\bibitem{bartschi2021atro}
Michael B{\"a}rtschi.
\newblock {\em {{\(ATR_ 0\)}} and Some Related Theories}.
\newblock PhD thesis, Universit{\"a}t Bern, 2021.

\bibitem{Brattka_Gherardi}
Vasco Brattka and Guido Gherardi.
\newblock Effective choice and boundedness principles in computable analysis.
\newblock {\em Bull. Symb. Log.}, 17(1):73--117, 2011.

\bibitem{Brattka_Gherardi_Pauly}
Vasco Brattka, Guido Gherardi, and Arno Pauly.
\newblock Weihrauch complexity in computable analysis.
\newblock In {\em Handbook of computability and complexity in analysis}, pages
  367--417. Cham: Springer, 2021.

\bibitem{PaulElliot}
Paul-Elliot~Angl\`{e}s d'Auriac.
\newblock {\em Infinite Computations in Algorithmic Randomness and Reverse
  Mathematics}.
\newblock PhD thesis, Universit{\'e} Paris-Est, 2019.

\bibitem{Dzhafarov_and_Mummert}
Damir~D. Dzhafarov and Carl Mummert.
\newblock {\em Reverse mathematics. {Problems}, reductions, and proofs}.
\newblock Theory Appl. Comput. Cham: Springer, 2022.

\bibitem{Caristi-fixed-point}
David Fern{\'a}ndez-Duque, Paul Shafer, Henry Towsner, and Keita Yokoyama.
\newblock Metric fixed point theory and partial impredicativity.
\newblock {\em Philosophical Transactions of the Royal Society A},
  381(2248):20220012, 2023.

\bibitem{Gherardi_and_Marcone}
Guido Gherardi and Alberto Marcone.
\newblock How incomputable is the separable {Hahn}-{Banach} theorem?
\newblock {\em Notre Dame J. Formal Logic}, 50(4):393--425, 2009.

\bibitem{arXiv1905.06868}
Jun~Le Goh.
\newblock Some computability-theoretic reductions between principles around
  {$\mathsf{ATR}_0$}.
\newblock {\em arXiv preprint arXiv:1905.06868}, 2019.

\bibitem{MR4328030}
Jun~Le Goh, Arno Pauly, and Manlio Valenti.
\newblock Finding descending sequences through ill-founded linear orders.
\newblock {\em J. Symb. Log.}, 86(2):817--854, 2021.

\bibitem{higuchi2010degree}
Kojiro Higuchi and Arno Pauly.
\newblock The degree structure of {Weihrauch}-reducibility.
\newblock {\em Log. Methods Comput. Sci.}, 9(2):17, 2013.
\newblock Id/No 2.

\bibitem{jagerfixedpoint}
Gerhard J{\"a}ger.
\newblock Short note: least fixed points versus least closed points.
\newblock {\em Arch. Math. Logic}, 60(7-8):831--835, 2021.

\bibitem{MR4231614}
Takayuki Kihara, Alberto Marcone, and Arno Pauly.
\newblock Searching for an analogue of {{\(\text{ATR}_0\)}} in the {Weihrauch}
  lattice.
\newblock {\em J. Symb. Log.}, 85(3):1006--1043, 2020.

\bibitem{marcone1996logical}
Alberto Marcone.
\newblock On the logical strength of {Nash}-{Williams}' theorem on transfinite
  sequences.
\newblock In {\em Logic: from foundations to applications. European logic
  colloquium, Keele, UK, July 20--29, 1993}, pages 327--351. Oxford: Clarendon
  Press, 1996.

\bibitem{MR2517689}
Stephen~G. Simpson.
\newblock {\em Subsystems of second order arithmetic}.
\newblock Perspect. Log. Cambridge: Cambridge University Press; Urbana, IL:
  Association for Symbolic Logic (ASL), paperback reprint of the 2nd ed. 2009
  edition, 2010.

\bibitem{MR3145191}
Henry Towsner.
\newblock Partial impredicativity in reverse mathematics.
\newblock {\em J. Symb. Log.}, 78(2):459--488, 2013.

\end{thebibliography}
\bibliographystyle{plain}

\end{document}